\newtheorem{theorem}{Theorem}[section]
\newtheorem{lemma}[theorem]{Lemma}
\newtheorem{corollary}[theorem]{Corollary}
\newtheorem{proposition}[theorem]{Proposition}
\newtheorem{conjecture}[theorem]{Conjecture}
\theoremstyle{definition}
\newtheorem{example}[theorem]{Example}
\theoremstyle{remark}
\newtheorem{remark}[theorem]{Remark}
\numberwithin{equation}{section}
\newenvironment{acknowledgement}{\noindent \textit{Acknowledgement.}}{}
\begin{document}
\title[Heat Kernel Traces on Foliations]{Traces of heat operators on
Riemannian foliations}
\author{Ken Richardson}
\address{Ken Richardson\\
Department of Mathematics\\
Texas Christian University\\
TCU Box 298900\\
Fort Worth, Texas 76129}
\email{k.richardson@tcu.edu}
\date{October, 2007}
\thanks{Research at MSRI is supported in part by NSF grant DMS-9701755.}
\subjclass[2000]{53C12, 58J37, 58J35, 58J50}
\keywords{foliation, heat equation, asymptotics, basic, Laplacian}

\begin{abstract}
We consider the basic heat operator on functions on a Riemannian foliation
of a compact, Riemannian manifold, and we show that the trace $K_{B}(t) $ of
this operator has a particular asymptotic expansion as $t\to 0$. The
coefficients of $t^{\alpha}$ and of $t^{\alpha}(\log t)^{\beta}$ in this
expansion are obtainable from local transverse geometric invariants -
functions computable by analyzing the manifold in an arbitrarily small
neighborhood of a leaf closure. Using this expansion, we prove some results
about the spectrum of the basic Laplacian, such as the analogue of Weyl's
asymptotic formula. Also, we explicitly calculate the first two nontrivial
coefficients of the expansion for special cases such as codimension two
foliations and foliations with regular closure.
\end{abstract}

\maketitle





\section{Introduction}

Let $M$ be an $n$-dimensional, compact, connected, oriented Riemannian
manifold without boundary. The heat kernel is the fundamental solution to
the associated heat equation. That is, it is the unique function $%
K:(0,\infty )\times M\times M$ that satisfies 
\begin{eqnarray*}
\left( \frac{\partial }{\partial t}+\Delta _{x}\right) K(t,x,y) &=&0\qquad 
\text{ and} \\
\lim_{t\rightarrow 0^{+}}\int_{M}K(t,x,y)\,f(y)\,\mathrm{dvol}(y)
&=&f(x)\qquad \text{for every continuous function }f.
\end{eqnarray*}
This function $K$ can be used to solve the heat equation $\left( \frac{%
\partial }{\partial t}+\Delta _{x}\right) g(t,x)=0$ for any initial
temperature distribution $g(0,x)$. It is well known (\cite{MiP}; see also 
\cite{Cha},\cite{Ro}) that for any $x\in M$ and any positive integer $k$, 
\begin{equation}
K(t,x,x)=\frac{1}{(4\pi t)^{n/2}}\left( u_{0}(x)+u_{1}(x)t+\dots
+u_{k}(x)t^{k}+O\left( t^{k+1}\right) \right) \text{ as }t\rightarrow 0,
\label{ktxx}
\end{equation}
where $u_{j}(x)$ are smooth functions on $M$ that depend only on geometric
data at the point $x\in M$. In particular, $u_{0}(x)=1$ and $u_{1}(x)=\frac{%
S(x)}{6}$, where $S(x)$ is the scalar curvature of $M$ at $x$. Using the
expansion above, it is possible to prove that the trace of the heat kernel
has a similar asymptotic formula. Let $0=\lambda _{0}<\lambda _{1}\leq
\lambda _{2}\leq \lambda _{3}\dots $ be the eigenvalues of the Laplacian,
counting multiplicities. Then 
\begin{eqnarray}
\text{tr}\,\left( e^{-t\Delta }\right) &=&\sum_{m\geq 0}e^{-t\lambda
_{m}}=\int_{M}K(t,x,x)\,\mathrm{dvol}(x)  \notag \\
&=&\frac{1}{(4\pi t)^{n/2}}\left( U_{0}+U_{1}t+\dots +U_{k}t^{k}+O\left(
t^{k+1}\right) \right) ,  \label{trace}
\end{eqnarray}
where $U_{j}=\int_{M}u_{j}(x)\,\mathrm{dvol}(x)$, with $u_{j}(x)$ defined as
above. In particular, $U_{0}=\text{Vol}(M)$. From formula~(\ref{trace}),
Karamata's theorem (see, for example, \cite[pp.~418--423]{Fel}) implies the
Weyl asymptotic formula ([We]; see also \cite[p.~155]{Cha}): 
\begin{eqnarray*}
N(\lambda ):= &\#\{\lambda _{m}|\lambda _{m}\leq &\lambda \} \\
&\sim &\frac{\text{Vol}(M)}{(4\pi )^{n/2}\Gamma \left( \frac{n}{2}+1\right) }%
\lambda ^{n/2}
\end{eqnarray*}
as $\lambda \rightarrow \infty $.

The heat kernel has also been studied more generally, such as in the case of
manifolds with boundary or in the case of elliptic operators acting on
sections of a vector bundle over the manifold. Many researchers have studied
this expansion and its generalizations and have worked to compute the
coefficient functions (see \cite{MiP},\cite{Be},\cite{McS},\cite{Gi}),
because the heat kernel is not only used to compute heat flows but is also
used in many areas of geometric and topological analysis. The asymptotic
expansions above (and their generalizations) have been used to study the
spectrum of the Laplacian (see \cite{Be},\cite{BeGM},\cite{Cha},\cite{McS}),
the determinant of the Laplacian (see \cite{OPS1},\cite{Ri1}), conformal
classes of metrics (see \cite{OPS2}), analytic torsion (see \cite{RaS},\cite%
{Che}), modular forms (see \cite{Feg}), index theory (see \cite{ABP},\cite%
{Ro}), stochastic analysis (see \cite{CarZ},\cite{MaSt}), gauge
theory/mathematical physics (see \cite{EFKK}, \cite{Cam},\cite{Bl}), and so
on.

Other researchers have studied generalizations of the heat kernel to orbit
spaces of a group acting on a manifold. In \cite{D2}, the author showed that
if $M$ is a connected $n$--dimensional (not necessarily compact) Riemannian
manifold and $\Gamma $ is a group acting isometrically, effectively, and
properly discontinuously on $M$ with compact quotient $\overline{M}$, the
induced heat operator $e^{-t\overline{\Delta }}$ on the space of functions
on $\overline{M}$ (which is not necessarily a manifold) satisfies 
\begin{equation*}
\text{tr}\,\left( e^{-t\overline{\Delta }}\right) =\frac{1}{(4\pi t)^{n/2}}%
\left( \overline{U}_{0}+\overline{U}_{1}t+\ldots +\overline{U}%
_{k}t^{k}+O\left( t^{k+1}\right) \right) ,
\end{equation*}
where $n=\mathrm{dim}(M)$ and $\overline{U}_{0}=\mathrm{Vol}(\overline{M})$.
This is equivalent to calculating the trace of the ordinary heat kernel on $%
M $ restricted to $\Gamma $-invariant functions. In \cite{BrH2}, the
researchers considered a compact, $n$-dimensional Riemannian manifold $M$
along with a compact group $G$ of isometries. Let $E_{\lambda }$ denote the
complex eigenspace of $\Delta $ associated to the eigenvalue $\lambda $, and
let $E_{\lambda }^{G}$ denote the subspace of $E_{\lambda }$ consisting of
eigenfunctions invariant under the induced action of $G$. They show that the
associated equivariant trace for $t>0$ is

\begin{eqnarray}
L(t):= &&\sum_{\lambda \geq 0}e^{-\lambda t}\dim E_{\lambda }^{G}  \notag \\
&\sim &(4\pi t)^{-m/2}\left( a_{0}+\sum_{j>k\geq 0}a_{jk}t^{j/2}(\log
t)^{k}\right) \text{ as }t\rightarrow 0,  \label{equtrace}
\end{eqnarray}%
where $m=\dim M/G$, $K_{0}$ is less than or equal to the number of different
dimensions of $G$-orbits in $M$, and $a_{0}=$Vol$(M/G)$. The coefficients $%
a_{jk}$ depend only on the metrics on $M$ and $G$ and their derivatives on
the subset $\{(g,x)\,|\,xg=x\}\subset G\times M$. The authors show in
addition that under certain conditions, no logarithmic terms appear in the
asymptotic expansion. Clearly, no logarithmic terms occur if all of the
orbits have the same dimension. Also, if $G$ is connected of rank 1 and acts
effectively on $M$, then no logarithmic terms appear. We remark that the
results of \cite{BrH2} apply to more general situations. If a second order
differential operator has the same principal symbol as the Laplacian, and if
it is geometrically defined and thus commutes with the action of $G$, then
the equivariant trace of the corresponding heat kernel satisfies~(\ref%
{equtrace}). After writing this paper, 
I was made aware of the recent work \cite{Dry}, where the authors
compute the asymptotics of the heat kernel on orbifolds, related to the work in 
\cite{D2}, \cite{BrH2}, and to Theorem~\ref{orbTheorem} in this paper.

In this paper, we consider a generalization of the trace of the heat kernel
to Riemannian foliations, and we will observe asymptotic behavior similar to
the results for group actions. Suppose that a compact, Riemannian manifold $M
$ is equipped with a \textit{Riemannian foliation} $\mathcal{F}$; that is,
the distance from one leaf of $\mathcal{F}$ to another is locally constant.
For simplicity, we assume that $M$ is connected and oriented and that the
foliation is transversally oriented. In some sense, this is a generalization
of the work in \cite{BrH2} and \cite{D2}, because the orbits of a group
acting by isometries form an example of a Riemannian foliation, if the
orbits all have the same dimension. In \cite{CrP}, the authors explicitly
calculated the heat kernel expansion in this specific case. Of course, the
dimensions of orbits of arbitrary group actions on a manifold are typically
not constant, and the leaf closures of a foliation are generally not orbits
of a group action. In \cite{Ri3}, we showed that many problems in the
analysis of the transverse geometry of Riemannian foliations and that of
group actions are equivalent problems.

A natural question to consider is the following: if we assume that the
temperature is always constant along the leaves of $\left( M,\mathcal{F}%
\right) $, how does heat flow on the manifold? To answer this question, we
must restrict to the space of basic functions $C_{B}^{\infty }(M)$(those
that are constant on the leaves of the foliation) and more generally the
space of basic forms $\Omega _{B}^{\ast }(M)$(smooth forms $\omega $such
that given any vector $X$tangent to the leaves, $i(X)\omega =0$and $%
i(X)d\omega =0$, where $i(X)$denotes the interior product with $X$). The
exterior derivative $d$maps basic forms to basic forms; let $d_{B}$denote $d$%
restricted to $\Omega _{B}^{\ast }(M)$. The relevant Laplacian on forms is
the basic Laplacian $\Delta _{B}=d_{B}\delta _{B}+\delta _{B}d_{B}$, where $%
\delta _{B}$is the adjoint of $d_{B}$on $L^{2}\left( \Omega _{B}^{\ast
}(M)\right) $. The basic heat kernel $K_{B}(t,x,y)$on functions is a
function on $(0,\infty )\times M\times M$that is basic in each $M$factor and
that satisfies 
\begin{eqnarray}
\left( \frac{\partial }{\partial t}+\Delta _{B,x}\right) K_{B}(t,x,y) &=&0 
\notag \\
\lim_{t\rightarrow 0^{+}}\int_{M}K_{B}(t,x,y)\,f(y)\,\mathrm{dvol}(y) &=&f(x)
\label{basicdef}
\end{eqnarray}
for every continuous basic function $f$. The existence of the basic heat
kernel allows us to answer the question posed at the beginning of this
paragraph. The basic heat kernel on forms is defined in an analogous way.
Many researchers have studied the analytic and geometric properties of the
basic Laplacian and the basic heat kernel (see \cite{A1},\cite{E},%
\cite{EH},\cite{KT},\cite{NRT}, \cite{NTV},\cite{PaRi}). In \cite{E}, the
author proved the existence of the basic heat kernel on functions. The
existence of the basic heat kernel on forms was proved for the case where
the mean curvature form of the foliation is basic in \cite{NRT}. The
existence of the basic heat kernel was proved in general in \cite{PaRi},
where the authors given explicit formulas for the basic Laplacian and basic
heat kernel in terms of the orthogonal projection from $L^{2}$--forms to $%
L^{2}$--basic forms and certain elliptic operators on the space of all forms
on the manifold. A point of difficulty that often arises in this area of
research is that the space of basic forms is not the set of all sections of
any vector bundle, and therefore the usual theory of elliptic operators and
heat kernels does not apply directly to $\Delta _{B}$ and $K_{B}$.

It is natural to try to prove the existence of asymptotic expansions of the
form (\ref{ktxx}) and (\ref{trace}) for the basic heat kernel. We remark
that the basic heat operator is trace class, since the basic Laplacian is
the restriction of an elliptic operator on the space of all functions (see
lower bounds for eigenvalues in \cite{PaRi} and \cite{LRi}). In \cite{Ri2},
it was shown that an analogue of (\ref{ktxx}) exists for the basic heat
kernel. As $t\rightarrow 0$, we have the following asymptotic expansion for
any positive integer $k$: 
\begin{equation}  \label{kbtxx}
K_{B}(t,x,x)=\frac{1}{(4\pi t)^{{}^{q_{x}/2}}}\left(
a_{0}(x)+a_{1}(x)t+\ldots +a_{k}(x)t^{k}+O\left( t^{k+1}\right) \,\right) ,
\end{equation}
where $q_{x}$ is the codimension of the leaf closure containing $x$ and $%
a_{j}(x)$ are functions depending on the local transverse geometry and
volume of the leaf closure containing $x$. The first two nontrivial
coefficients were computed in \cite{Ri2} and are given in Theorem~\ref%
{oldthm}. In general, the power $q_{x}$ may vary, but its value is minimum
and constant on an open, dense subset of $M$. One might guess that the
asymptotics of the trace of the basic heat operator could be obtained by
integrating the expansion (\ref{kbtxx}), similar to obtaining (\ref{trace})
from (\ref{ktxx}). However, the functions $a_{j}(x)$ for $j\geq 1$ are not
necessarily bounded or even integrable over the dense subset. Example~\ref%
{sphereexample} exhibits this precise behavior. Even if the coefficients $%
a_{j}(x)$ in (\ref{kbtxx}) are integrable, it is not true in general that
these functions can be integrated to obtain the asymptotics of the trace.
Example~\ref{exnonorientable} shows that even if $a_{j}(x)$ is constant,
these coefficients cannot be integrated to yield the trace asymptotics.

Despite these obstacles, we prove that an asymptotic expansion for the trace
of the basic heat operator exists. Let $\overline{q}$ be the minimum
codimension of the leaf closures of $(M,\mathcal{F})$. As $t\rightarrow 0$,
the trace $K_{B}(t)$ of the basic heat kernel on functions satisfies the
following asymptotic expansion for any positive integer $J$: 
\begin{equation}
K_{B}(t)=\frac{1}{(4\pi t)^{\overline{q}/2}}\left( a_{0}+\sum_{j>0,~0\leq
k\leq K_{0}}a_{jk}t^{j/2}(\log t)^{k}+O\left( t^{\frac{J+1}{2}}(\log
t)^{K_{0}-1}\right) \right) ,  \label{tb}
\end{equation}%
where $K_{0}$ is less than or equal to the number of different dimensions of
leaf closures in $\mathcal{F}$, and where 
\begin{equation*}
a_{0}=V_{tr}=\int_{M}\frac{1}{\text{Vol}\left( \overline{L_{x}}\right) }\,{%
\mathrm{dvol}}(x).
\end{equation*}%
This is the content of Theorem~\ref{tracebasic}. If the codimension of $%
\mathcal{F}$ is less than $4$, then the logarithmic terms vanish. The idea
of proof is as follows. We rewrite the integral $\displaystyle %
K_{B}(t)=\int_{M}K_{B}(t,x,x)\,\mathrm{dvol}(x)$ in terms of an integral
over $W\times SO(q)$, where $W$ is the \textit{basic manifold}, an $SO(q)$%
-manifold associated to $(M,\mathcal{F})$. Then, we apply the results of 
\cite{BrH2}. In Corollary~\ref{Weyl}, we obtain the Weyl asymptotic formula
for the eigenvalues of the basic Laplacian.

In Section~\ref{special}, we derive the first two nontrivial coefficients in
the asymptotic expansion (\ref{tb}) in some special cases, including but not
limited to all possible types of Riemannian foliations of codimension two or
less. In each of these cases, the asymptotic formula contains no logarithmic
terms. We conjecture (Conjecture~\ref{traceconjecture}) that the asymptotic
expansion for the general case has the same features. In Section~\ref{finite}%
, we derive the asymptotics for the case in which all of the leaf closures
have the same dimension, for any codimension. In this case, the leaf closure
space is an orbifold, and en route to the result, we obtain the asymptotics
of the orbifold heat trace, which may be of independent interest. We find
the asymptotics for the transversally orientable, codimension one case in
Section~\ref{codim1}, for the nonorientable codimension one case in Section~%
\ref{nonorientable}, and for codimension two Riemannian foliations in
Section~\ref{codim2}. We remark that the codimension two case yields five
possible types of asymptotic expansions. In Section~\ref{general}, we show
how to simplify the general case by subdividing the basic manifold into
pieces, and this result is used in the calculations of Section~\ref{codim2}.
In Section~\ref{example}, we demonstrate the asymptotic formulas in two
examples of codimension two foliations.

We remark that these asymptotic expansions yield new results concerning the
spectrum of the basic Laplacian. By Corollary~\ref{Weyl}, the eigenvalues of
the basic Laplacian determine the minimum leaf closure codimension and the
transverse volume $V_{tr}$ of the foliation. The results of Section~\ref%
{special} give more specific information in special cases. For example, if
the leaf closure codimension is one, then the spectrum of the basic
Laplacian determines the $L^{2}$ norm of the mean curvature of the leaf
closure foliation. Therefore, the spectrum determines whether or not the
leaf closure foliation is minimal.

In most cases considered in the paper, we assume that the foliations are
transversally oriented for simplicity. In Section~\ref{nonorient}, we
describe the method of obtaining the asymptotics of the basic heat kernel on
Riemannian foliations that are not transversally orientable.

\section{Heat Kernels and Operators on the Basic Manifold}

\label{setup}In this section, we introduce some notation, recall some
results contained in \cite{Ri2}, and then use these results to obtain a
formula for the trace of the basic heat kernel. Let $M$ be an $n$%
-dimensional, closed, connected, oriented Riemannian manifold without
boundary, and let $\mathcal{F}$ be a transversally--oriented, codimension $q$
foliation on $M$ for which the metric is bundle--like. As in the
introduction, we let $\Delta _{B}$ denote the basic Laplacian, and we let $%
K_{B}(t,x,y)$ be the basic heat kernel on functions defined in~(\ref%
{basicdef}).

Let $\widehat M$ be the oriented transverse orthonormal frame bundle of $(M,%
\mathcal{F})$, and let $\pi$ be the natural projection $\pi:\widehat
M\longrightarrow M$. The manifold $\widehat M$ is a principal $SO(q)$-bundle
over $M$. Given $\hat x\in \widehat M$, let $\hat xg$ denote the
well-defined right action of $g\in SO(q)$ applied to $\hat x$. Associated to 
$\mathcal{F}$ is the lifted foliation $\widehat{\mathcal{F}}$ on $\widehat M$
. The lifted foliation is transversally parallelizable, and the closures of
the leaves are fibers of a fiber bundle $\rho:\widehat M\longrightarrow W$.
The manifold $W$ is smooth and is called the basic manifold (see \cite[%
pp.~105-108, p.~147ff]{Mo}). Let $\overline{\mathcal{F}}$ denote the
foliation of $\widehat M$ by leaf closures of $\widehat {\mathcal{F}}$.

Endow $\widehat M$ with the metric $g^M+g^{SO(q)}$, where $g^M$ is the
pullback of the metric on $M$, and $g^{SO(q)}$ is the standard, normalized,
biinvariant metric on the fibers. By this, we mean that we use the
transverse Levi--Civita connection (see \cite[p.~80ff]{Mo}) to do the
following. We calculate the inner product of two horizontal vectors in $%
T_{\hat x}\widehat M$ by using $g^M$, and we calculate the inner product of
two vertical vectors using $g^{SO(q)}$. We require that vertical vectors are
orthogonal to horizontal vectors. This metric is bundle--like for both $%
(\widehat M, \widehat {\mathcal{F}})$ and $(\widehat M,\overline {\mathcal{F}%
})$. The transverse metric on $(\widehat M,\overline {\mathcal{F}})$ induces
a well--defined Riemannian metric on $W$. The group $G=SO(q)$ acts by
isometries on $W$ according to $\rho(\hat x)g:=\rho(\hat xg)$ for $g\in
SO(q) $.

The volume form on $\widehat{M}$ can be written as $\mathrm{dvol}_{\overline{%
\mathcal{F}}}\,\rho ^{\ast }\!\mathrm{dvol}_{W}$, where $\mathrm{dvol}_{%
\overline{\mathcal{F}}}$ is the volume form of any leaf closure and $\mathrm{%
dvol}_{W}$ is the volume form on the basic manifold $W$. Let $\phi
:W\rightarrow \mathbb{R}$ be defined by taking $\phi (y)$ to be the volume
of $\rho ^{-1}(y)$. The function $\phi $ is obviously positive and is also
smooth, since $\rho $ is a smooth Riemannian submersion (see a proof of a
similar fact in \cite[Proposition 1.1]{PaRi}). Let $(\ ,\ )$ denote the
pointwise inner product of forms, and let $\left\langle \ ,\ \right\rangle $
denote the $L^{2}$-inner product of forms. Then for all $\alpha ,\gamma \in
\Omega ^{\ast }(W)$, 
\begin{eqnarray}
\left\langle \rho ^{\ast }\alpha ,\rho ^{\ast }\gamma \right\rangle _{%
\widehat{M}} &=&\int_{\widehat{M}}\left( \rho ^{\ast }\alpha ,\rho ^{\ast
}\gamma \right) _{\widehat{M}}\,\mathrm{dvol}_{\overline{\mathcal{F}}}\,\rho
^{\ast }\!\mathrm{dvol}_{W}  \notag \\
&=&\int_{\widehat{M}}\rho ^{\ast }\left( \alpha ,\gamma \right) _{W}\,%
\mathrm{dvol}_{\overline{\mathcal{F}}}\,\rho ^{\ast }\!\mathrm{dvol}_{W} 
\notag \\
&=&\int_{W}\phi \cdot \left( \alpha ,\gamma \right) _{W}\,\mathrm{dvol}%
_{W}=\left\langle \phi \alpha ,\gamma \right\rangle _{W}.  \label{metricw}
\end{eqnarray}
We have used the fact that for a bundle--like metric, the pointwise inner
product has the same action on basic forms as the pullback of the pointwise
inner product on the local quotient manifold. For our foliation $(\widehat{M}%
,\overline{\mathcal{F}})$, $W$ is the (global) quotient manifold.

Let $\widehat \Delta_B$ denote the basic Laplacian associated to the lifted
foliation on $\widehat M$, and let $\Delta_{W}$ denote the ordinary
Laplacian on $W$ corresponding to the induced metric on $W$. Note that $\phi$
is invariant under the right action of $SO(q)$ on $W$, so we define the
smooth function $\psi:M\to \mathbb{R}$ by $\pi^{*}\psi=\rho^{*}\phi$. Let $%
\sigma \lrcorner $ denote the adjoint of the wedge product $\sigma \wedge$
for any form $\sigma$.

We define the elliptic operator $\widetilde{\Delta_{W}}:C^\infty (W)\to%
\mathbb{R}$ by 
\begin{eqnarray}
\widetilde{\Delta_{W}}&=&\Delta_{W}-\frac 1\phi\, (d\phi )\lrcorner \circ d
\label{twistlap} \\
&=&-g^{ij}\partial_{i}\partial_{j}-b^{j}\partial_{j}\text{ locally on }W, 
\notag
\end{eqnarray}
where $g=\left( g_{ij}\right)$ is the metric on $W$ in local coordinates, $%
\left( g^{ij}\right)=g^{-1}$, and $b^{j}=\partial_{i}g^{ij}+g^{ij}%
\partial_{i}\left( \log \left( \phi\sqrt{\det g}\right)\right)$. Let $K_{B}$
and $\widehat {K_{B}}$ denote the basic heat kernels on $M$ and $\widehat M$%
, respectively, and let $\widetilde {K_{W}}$ denote the heat kernel
corresponding to $\widetilde{\Delta_{W}}$ on $W$. Then we have the following
results (see \cite[Theorems 1.1 and 2.4]{Ri2}):

\begin{enumerate}
\item The following equation holds on $C^{\infty }(W)$: 
\begin{equation*}
\widehat{\Delta }_{B}\rho ^{*}=\rho ^{*}\widetilde{\Delta _{W}}.
\end{equation*}

\item For every $x,y\in M$, $\hat{x}\in \pi ^{-1}(x)$, and $\hat{y}\in \pi
^{-1}(y)$, 
\begin{equation*}
K_{B}(t,x,y)=\int_{G}\widetilde{K_{B}}(t,\hat{x},\hat{y}g)\,\chi (g).
\end{equation*}

\item For every $\hat{x}\in \pi ^{-1}(x)$ and $\hat{y}\in \pi ^{-1}(y)$, 
\begin{equation}
\widehat{K_{B}}(t,\hat{x},\hat{y})=\frac{\widetilde{K_{W}}\left( t,\rho (%
\hat{x}),\rho (\hat{y})\right) }{\phi \left( \rho (\hat{y})\right) }.
\label{kerhatkerw}
\end{equation}
\end{enumerate}

We will now write the trace of the basic heat kernel in terms of $\widetilde{%
K_{W}}$. Let $\mathrm{dvol}$, $\widehat{\mathrm{dvol}}$, $\mathrm{dvol}_{W}$%
, and $\chi $ denote the volume forms on $M$, $\widehat{M}$, $W$, and $%
G=SO(q)$, respectively. Then 
\begin{eqnarray}
K_{B}(t) &=&\text{trace}\,e^{-t\Delta _{B}}  \notag \\
&=&\int_{M}K_{B}(t,x,x)\,\mathrm{dvol}  \notag \\
&=&\int_{M}\int_{G}\widehat{K_{B}}\left( t,\hat{x},\hat{x}g\right) \,\chi
(g)\,\mathrm{dvol}(x).  \label{trmhat}
\end{eqnarray}%
For any measurable section $s:M\rightarrow \widehat{M}$ that is smooth on an
open, dense subset of $M$, we can describe points of $\widehat{M}$ in terms
of the map $M\times G\rightarrow \widehat{M}$ defined by $(x,h)\mapsto s(x)h$%
. In these \textquotedblleft coordinates,\textquotedblright\ the measure on $%
\widehat{M}$ is $\widehat{\mathrm{dvol}}(x,h)=\mathrm{dvol}(x)\,\chi (h)$.
Therefore, if we change coordinates $\hat{x}\mapsto \hat{x}h$ in~(\ref%
{trmhat}), average over $G$, and use Fubini's Theorem, we get 
\begin{eqnarray*}
K_{B}(t) &=&\int_{G}\int_{G}\int_{M}\widehat{K_{B}}\left( t,\hat{x}h,\hat{x}%
hg\right) \,\mathrm{dvol}(x)\,\chi (h)\,\chi (g) \\
&=&\int_{G}\left\langle \widehat{K_{B}}(t,\cdot ,\cdot \,g),1\right\rangle _{%
\widehat{M}}\,\chi (g) \\
&=&\int_{G}\left\langle \frac{\widetilde{K_{W}}(t,\rho (\cdot ),\rho (\cdot
)g)}{\phi (\rho (\cdot ))},1\right\rangle _{\widehat{M}}\,\chi (g)\,\,\,%
\text{by (\ref{kerhatkerw})} \\
&=&\int_{G}\left\langle \widetilde{K_{W}}(t,\cdot ,\cdot \,g),1\right\rangle
_{W}\,\chi (g)\,\,\,\,\,\,\,\,\text{by (\ref{metricw})} \\
&=&\int_{G\times W}\widetilde{K_{W}}(t,w,wg)\,\mathrm{dvol}_{W}(w)\,\chi (g)
\end{eqnarray*}%
We have shown the following:

\begin{proposition}
\label{trbasic1} The trace $K_{B}(t)$ of the basic heat kernel on functions
is given by the formula 
\begin{equation*}
K_{B}(t)=\int_{G\times W}\widetilde{K_{W}}(t,w,wg)\,\mathrm{dvol}%
_{W}(w)\,\chi (g).
\end{equation*}
\end{proposition}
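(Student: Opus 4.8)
The plan is to begin from $K_{B}(t)=\int_{M}K_{B}(t,x,x)\,\mathrm{dvol}(x)$ and transport the computation up to the transverse frame bundle $\widehat{M}$ and then down to the basic manifold $W$, applying in turn the three identities recalled above from \cite{Ri2} together with the inner-product formula~(\ref{metricw}). First I would use item (2) with $y=x$: for \emph{every} lift $\hat{x}\in\pi^{-1}(x)$ one has $K_{B}(t,x,x)=\int_{G}\widehat{K_{B}}(t,\hat{x},\hat{x}g)\,\chi(g)$, which yields formula~(\ref{trmhat}),
\begin{equation*}
K_{B}(t)=\int_{M}\int_{G}\widehat{K_{B}}(t,\hat{x},\hat{x}g)\,\chi(g)\,\mathrm{dvol}(x).
\end{equation*}
Since the inner integral is independent of the choice of $\hat{x}$, the identity persists when a fixed measurable section $s\colon M\to\widehat{M}$ (smooth on an open dense set; it exists because $\pi$ is a locally trivial principal $SO(q)$-bundle) is replaced by $x\mapsto s(x)h$ for any fixed $h\in G$.

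Next I would identify $\widehat{M}$ with $M\times G$ as a measure space. Because the metric on $\widehat{M}$ is the orthogonal sum of $\pi^{\ast}g^{M}$ with the normalized bi-invariant metric $g^{SO(q)}$ on the fibers, the chart $(x,h)\mapsto s(x)h$ carries $\mathrm{dvol}(x)\,\chi(h)$ to $\widehat{\mathrm{dvol}}$. Averaging the lift-independent identity over $h\in G$ (legitimate because $\chi$ is a probability measure) and using Fubini to do the $(x,h)$-integration first, the inner integral turns into an integral over $\widehat{M}$, giving
\begin{equation*}
K_{B}(t)=\int_{G}\left\langle \widehat{K_{B}}(t,\cdot,\cdot\,g),1\right\rangle_{\widehat{M}}\,\chi(g).
\end{equation*}
There is no convergence obstruction: $\widetilde{K_{W}}$ is a genuine smooth heat kernel on the compact manifold $W$, and the basic heat operator is trace class by \cite{PaRi},\cite{LRi}, so all integrals are absolutely convergent and Fubini applies.

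Finally I would descend to $W$. By item (3) and $\rho(\hat{x}g)=\rho(\hat{x})g$ together with the $SO(q)$-invariance of $\phi$, we have $\widehat{K_{B}}(t,\hat{x},\hat{x}g)=\rho^{\ast}F_{g}(\hat{x})$ where $F_{g}(w)=\widetilde{K_{W}}(t,w,wg)/\phi(w)$ on $W$. Applying~(\ref{metricw}) with $\alpha=F_{g}$ and $\gamma=1$ converts $\langle\rho^{\ast}F_{g},1\rangle_{\widehat{M}}$ into $\langle\phi F_{g},1\rangle_{W}=\langle\widetilde{K_{W}}(t,\cdot,\cdot\,g),1\rangle_{W}$, so that
\begin{equation*}
K_{B}(t)=\int_{G}\left\langle\widetilde{K_{W}}(t,\cdot,\cdot\,g),1\right\rangle_{W}\,\chi(g)=\int_{G\times W}\widetilde{K_{W}}(t,w,wg)\,\mathrm{dvol}_{W}(w)\,\chi(g),
\end{equation*}
which is the claimed formula.

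The main obstacle is the measure-theoretic bookkeeping in the middle step: producing the measurable section, verifying that $(x,h)\mapsto s(x)h$ intertwines the product measure $\mathrm{dvol}\times\chi$ with $\widehat{\mathrm{dvol}}$, and confirming that the inner integrand of~(\ref{trmhat}) is genuinely lift-independent so that the $h$-average followed by Fubini is valid. Once this identification is secured, the remaining two steps are purely formal substitutions using the three recalled identities and~(\ref{metricw}).
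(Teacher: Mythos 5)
Your proposal is correct and follows the paper's own proof essentially verbatim: starting from $K_{B}(t)=\int_{M}K_{B}(t,x,x)\,\mathrm{dvol}(x)$, applying item (2) to obtain~(\ref{trmhat}), identifying $\widehat{M}$ with $M\times G$ via a measurable section so that $\widehat{\mathrm{dvol}}=\mathrm{dvol}\,\chi$, averaging over $G$ and applying Fubini, and then descending to $W$ via~(\ref{kerhatkerw}) and~(\ref{metricw}). The extra care you take with the lift-independence and the measure identification is exactly the bookkeeping the paper performs implicitly.
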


\begin{corollary}
The trace of the basic heat kernel on functions on $M$ is the same as the
trace of the heat kernel corresponding to $\widetilde{\Delta _{W}}$
restricted to $SO(q)$--invariant functions on $W$.
\end{corollary}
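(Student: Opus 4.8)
The plan is to recognize the formula in Proposition~\ref{trbasic1} as exactly the equivariant heat trace of $\widetilde{\Delta_W}$ for the $SO(q)$-action, so that the corollary is essentially a reformulation. First I would note that $\widetilde{\Delta_W}$ commutes with the $SO(q)$-action on $W$: the Laplacian $\Delta_W$ commutes with isometries, and the additional first-order term $\frac{1}{\phi}(d\phi)\lrcorner\circ d$ in~(\ref{twistlap}) is $SO(q)$-invariant because $\phi$ is (as observed just before the definition of $\psi$). Hence $e^{-t\widetilde{\Delta_W}}$ commutes with the $SO(q)$-action and preserves the closed subspace $\mathcal{H}^{G}\subset L^{2}(W)$ of $SO(q)$-invariant functions. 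By~(\ref{metricw}) the operator $\widetilde{\Delta_W}$ is self-adjoint on $L^{2}(W,\phi\,\mathrm{dvol}_W)$, hence is a nonnegative self-adjoint second-order elliptic operator on the compact manifold $W$, so $e^{-t\widetilde{\Delta_W}}$ is trace class and so is its restriction to $\mathcal{H}^{G}$.

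Next I would write that restriction as $P\circ e^{-t\widetilde{\Delta_W}}$, where $Pf=\int_{G}(g\cdot f)\,\chi(g)$ is the averaging operator. Since $G=SO(q)$ acts by $\phi$-preserving isometries, $P$ is the orthogonal projection of $L^{2}(W)$ onto $\mathcal{H}^{G}$ (simultaneously for $\mathrm{dvol}_W$ and for $\phi\,\mathrm{dvol}_W$), it satisfies $P^{2}=P$, and it commutes with $e^{-t\widetilde{\Delta_W}}$; therefore the trace of $e^{-t\widetilde{\Delta_W}}$ restricted to $\mathcal{H}^{G}$ equals $\operatorname{tr}\big(P\,e^{-t\widetilde{\Delta_W}}\big)$ computed on all of $L^{2}(W)$. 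Now $e^{-t\widetilde{\Delta_W}}$ has kernel $\widetilde{K_W}(t,\cdot,\cdot)$ with respect to $\mathrm{dvol}_W$ — this normalization is exactly what~(\ref{kerhatkerw}) together with $\widehat{\mathrm{dvol}}=\mathrm{dvol}_{\overline{\mathcal{F}}}\,\rho^{*}\mathrm{dvol}_W$ forces — so $P\,e^{-t\widetilde{\Delta_W}}$ has continuous kernel $(w,w')\mapsto\int_{G}\widetilde{K_W}(t,wg,w')\,\chi(g)$ with respect to $\mathrm{dvol}_W$. Applying the standard identity $\operatorname{tr}(A)=\int_{W}k(w,w)\,\mathrm{dvol}_W(w)$ for a trace-class integral operator with continuous kernel $k$, the trace equals $\int_{G\times W}\widetilde{K_W}(t,wg,w)\,\mathrm{dvol}_W(w)\,\chi(g)$; substituting $w\mapsto wg$ (an isometry, so $\mathrm{dvol}_W$ is unchanged) and then $g\mapsto g^{-1}$ (Haar measure is inversion-invariant) rewrites this as $\int_{G\times W}\widetilde{K_W}(t,w,wg)\,\mathrm{dvol}_W(w)\,\chi(g)$, which is $K_B(t)$ by Proposition~\ref{trbasic1}.

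I expect the only delicate point to be the bookkeeping of measures in the second step: $\widetilde{\Delta_W}$ is self-adjoint not on $L^{2}(W,\mathrm{dvol}_W)$ but on $L^{2}(W,\phi\,\mathrm{dvol}_W)$, so one must keep straight which measure $\widetilde{K_W}$ is normalized against and which measure enters the kernel-to-trace formula. As a consistency check, carrying out the same computation in $L^{2}(W,\phi\,\mathrm{dvol}_W)$ — where $e^{-t\widetilde{\Delta_W}}$ has kernel $\widetilde{K_W}(t,w,w')/\phi(w')$ relative to $\phi\,\mathrm{dvol}_W$ — produces the identical final integral, because the two factors of $\phi$ cancel.
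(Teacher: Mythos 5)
Your proposal is correct and follows the paper's (implicit) route: the paper states this corollary as an immediate consequence of Proposition~\ref{trbasic1}, the point being precisely that $\int_{G\times W}\widetilde{K_{W}}(t,w,wg)\,\mathrm{dvol}_{W}(w)\,\chi(g)$ is the standard expression for the trace of $e^{-t\widetilde{\Delta_{W}}}$ composed with averaging over $G$, i.e.\ the equivariant trace, and you have simply written out that identification (projection $P$, commutation with the heat semigroup, kernel-to-trace formula, change of variables). Your attention to the measure normalization of $\widetilde{K_{W}}$ is consistent with how~(\ref{kerhatkerw}) and~(\ref{metricw}) are used in the derivation of Proposition~\ref{trbasic1}, so no gap.
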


Therefore, the results of \cite{BrH2} apply, since $\widetilde {\Delta_{W}}$
has the same principal symbol as $\Delta_{W}$ and commutes with the $G$%
--action. In particular, formula~(\ref{equtrace}) holds, where $m=\dim W/G$, 
$K_{0}$ is less than or equal to the number of different dimensions of $G$%
-orbits in $W$, and $a_{0}=$Vol$(W/G)$.

The leaf closures of $(M,\mathcal{F})$ with maximal dimension form an open,
dense subset $M_{0}$ of $M$ (see \cite[pp. 157--159]{Mo}). The leaf closures
of the lifted foliation cover the leaf closures of $M$ (see \cite[p.~151ff]%
{Mo}). Given a leaf closure $\overline{L_{x}}$ containing $x\in M$, the
dimension of a leaf closure contained in $\pi^{-1}\left( \overline{L_{x}}%
\right)$ is $\dim \overline{L_{x}}+\dim H_{\hat x}$, where $\hat x\in
\pi^{-1}\left( x\right)$ and $H_{\hat x}$ is the subgroup of $SO(q)$ that
fixes the leaf closure containing $\hat x$. In some sense, the group $%
H_{\hat x}$ measures the holonomy of the leaves contained in the leaf
closure containing $x$ as well as the holonomy of the leaf closure. The
group $H_{\hat x}$ is isomorphic to the structure group corresponding to the
principal bundle $\overline L_{\hat x}\to \overline L_{x}$, where $\overline
L_{\hat x}$ is the leaf closure in $\widehat M$ that contains $\hat x$; the
conjugacy class of $H\subset G$ depends only on the leaf $L_{x}$. Therefore,
on an open, dense subset of $W$, the orbits of $G$ have dimension $\frac {%
q(q-1)}2-m$, where $m$ is the dimension of the principal isotropy groups.
The above discussion also implies that the dimension of $W$ is $\overline q+%
\frac {q(q-1)}2-m$, where $\overline q$ is the codimension of the leaf
closures of $M$ of maximal dimension. As a result, we have that 
\begin{equation*}
m=\dim W/G=\dim W-\left(\frac {q(q-1)}2-m\right) =\overline q.
\end{equation*}
Also, the number of different dimensions of $G$-orbits in $W$ is equal to
the number of different dimensions of leaf closures in $(M,\mathcal{F})$.

For $w\in W$, let $wG$ denote the $G$-orbit of $w$. By construction, 
\begin{eqnarray*}
\text{Vol}(wG)\cdot \phi (w) &=&\text{Vol}\left( \rho ^{-1}(wG)\right) \\
&=&\text{Vol}\left( \overline{L_{x}}\right) ,
\end{eqnarray*}
where $x$ is chosen so that $x\in \pi \left( \rho ^{-1}(wG)\right) $. Using
the above information, we have 
\begin{eqnarray*}
\text{Vol}(W/G) &=&\int_{W}\frac{1}{\text{Vol}(wG)}\,\mathrm{dvol}_{W}(w) \\
&=&\int_{W}\frac{\phi (w)}{\text{Vol}\left( \rho ^{-1}(wG)\right) }\,\mathrm{%
dvol}_{W}(w) \\
&=&\int_{\widehat{M}}\frac{1}{\text{Vol}\left( \rho ^{-1}(\rho (\hat{x}%
)G)\right) }\,\widehat{\mathrm{dvol}}(\hat{x})\qquad \text{by (\ref{metricw})%
} \\
&=&\int_{\widehat{M}}\frac{1}{\text{Vol}\left( \overline{L_{\pi (\hat{x})}}%
\right) }\,\widehat{\mathrm{dvol}}(\hat{x}) \\
&=&\int_{M}\frac{1}{\text{Vol}\left( \overline{L_{x}}\right) }\,{\mathrm{dvol%
}}(x)
\end{eqnarray*}
We remark that the integrals above converge. Proposition 2.1 in \cite{Pa},
which concerns isometric flows, is easily modified to show that the first
integral above converges; the convergence of the other integrals follows.

Using the above discussions and the results of \cite{BrH2}, we have the
following:

\begin{theorem}
\label{tracebasic} Let $\overline{q}$ be the codimension of the leaf
closures of $(M,\mathcal{F})$ with maximal dimension. As $t\rightarrow 0$,
the trace $K_{B}(t)$ of the basic heat kernel on functions satisfies the
following asymptotic expansion for any positive integer $J$: 
\begin{equation*}
K_{B}(t)=\frac{1}{(4\pi t)^{\overline{q}/2}}\left( a_{0}+\sum_{j>0,~0\leq
k\leq K_{0}}a_{jk}t^{j/2}(\log t)^{k}+O\left( t^{\frac{J+1}{2}}(\log
t)^{K_{0}-1}\right) \right) ,
\end{equation*}%
where $K_{0}$ is less than or equal to the number of different dimensions of
leaf closures of $(M,\mathcal{F})$, and where 
\begin{equation*}
a_{0}=V_{tr}=\int_{M}\frac{1}{\mathrm{Vol}\left( \overline{L_{x}}\right) }\,{%
\mathrm{dvol}}(x).
\end{equation*}%
The coefficients of the logarithmic terms in the expansion vanish if the
codimension of the foliation is less than four.
\end{theorem}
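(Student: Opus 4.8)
The plan is to recognize $K_B(t)$ as the equivariant heat trace studied in \cite{BrH2}, applied to the operator $\widetilde{\Delta_W}$ on the basic manifold $W$, and then to read off the constants in (\ref{equtrace}) in terms of the submersion $\rho\colon\widehat M\to W$ and the projection $\pi\colon\widehat M\to M$.

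By Proposition~\ref{trbasic1} and the corollary immediately after it, $K_B(t)=\int_{G\times W}\widetilde{K_W}(t,w,wg)\,\mathrm{dvol}_W(w)\,\chi(g)$ is precisely the trace of $e^{-t\widetilde{\Delta_W}}$ restricted to the space of $G=SO(q)$-invariant functions on $W$; that is, $K_B(t)=L(t)$ in the notation of (\ref{equtrace}) for this operator. Now $\widetilde{\Delta_W}$ is elliptic with the same principal symbol as $\Delta_W$, is assembled from the geometric data $g$ and $\phi$, and commutes with the $G$-action because $\phi$ is $G$-invariant; equivalently, conjugating by $\phi^{1/2}$ converts $\widetilde{\Delta_W}$ into a formally self-adjoint elliptic operator on $L^2(W,\mathrm{dvol}_W)$ with the same principal symbol, the same spectrum, and the same $G$-invariant eigenspaces, hence the same equivariant trace. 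So the hypotheses of \cite{BrH2} are met, and \cite{BrH2} yields the expansion (\ref{tb}), including its remainder term, for every positive integer $J$; reindexing the half-integer powers of $t$ and powers of $\log t$ is routine.

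It remains to identify the constants, and this is carried out in the discussion immediately preceding the theorem: one has $m=\dim W/G=\overline q$ (from $\dim W=\overline q+\dim(\text{generic }G\text{-orbit})$); the number of distinct dimensions of $G$-orbits in $W$ equals the number of distinct dimensions of leaf closures of $(M,\mathcal F)$, which bounds $K_0$; and $a_0=\mathrm{Vol}(W/G)=\int_M\mathrm{Vol}(\overline{L_x})^{-1}\,\mathrm{dvol}(x)=V_{tr}$, the integrals in question converging by the argument of \cite[Proposition~2.1]{Pa}.

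Finally, I would establish the vanishing of the logarithmic coefficients for $q\le 3$ by a case analysis on $G=SO(q)$, invoking the two criteria from \cite{BrH2}: the logarithms vanish if all orbits have the same dimension, and also if $G$ is connected of rank $1$ and acts effectively. If $q=1$, then $G$ is trivial and every orbit is a point. If $q\in\{2,3\}$, let $N$ be the (closed, normal) kernel of the action of $G$ on $W$ and replace $G$ by $G/N$; this changes neither the orbits nor the space of $G$-invariant functions, hence not $L(t)$. For $q=2$, $N$ is either all of $SO(2)$, in which case every orbit is a point, or else $G/N\cong SO(2)$ acts effectively and is connected of rank $1$. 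For $q=3$, since $SO(3)$ is simple, $N$ is either $SO(3)$, giving point orbits, or $\{e\}$, so that $SO(3)$ itself acts effectively and is connected of rank $1$. In every case one of the two criteria applies. (The argument halts at $q=4$ because $SO(4)$ has rank $2$, so the rank-$1$ criterion is unavailable; this is consistent with the theorem claiming vanishing only for $q<4$.) I expect the only subtleties to be verifying that \cite{BrH2} applies to $\widetilde{\Delta_W}$ rather than merely to $\Delta_W$, handled by the $\phi^{1/2}$-conjugation above, and the reduction to an effective action in the last step, since the rank-$1$ criterion is stated in \cite{BrH2} only for effective actions; neither is deep, and the case analysis on $SO(q)$ is the place that most needs care.
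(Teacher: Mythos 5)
Your proposal is correct and follows essentially the same route as the paper: Proposition~\ref{trbasic1} identifies $K_B(t)$ with the equivariant trace of \cite{BrH2} for $\widetilde{\Delta_W}$ on $W$, the constants are read off from the discussion of orbit dimensions and $\mathrm{Vol}(W/G)$ preceding the theorem, and the logarithmic terms are killed by the rank-one criterion for $SO(q)$ with $q<4$. The two places where you add detail --- the $\phi^{1/2}$-conjugation to make $\widetilde{\Delta_W}$ manifestly fit the hypotheses of \cite{BrH2}, and the passage to the effective quotient $G/N$ before invoking the rank-one criterion --- are correct refinements of steps the paper simply asserts.
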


In the last statement of the above proposition, we used the fact that $SO(q)$
is connected, has rank one for $q<4$, and acts effectively on $W$. Also,
note that $a_{jk}$ depends only on the transverse metric, because by the
results in \cite{BrH2} it depends only on infinitesimal metric information
on the set $\{ (g,w)\, |\, wg=w\}$, which is entirely determined by the
transverse geometry of $(M,\mathcal{F})$.

We will call the coefficients $a_{0}$ and $a_{jk}$ the basic heat
invariants; they are functions of the spectrum of the basic Laplacian,
because of the formula (see \cite[Theorem 3.5]{PaRi}) 
\begin{equation*}
K_{B}(t)=\text{trace}\left( e^{-t\Delta_B}\right) =\sum_{j\geq
0}e^{-t\lambda^{B}_{j}}.
\end{equation*}
Using Karamata's theorem (\cite[pp.~418--423]{Fel}), we also have the
following, which generalizes Weyl's asymptotic formula (\cite{We}):

\begin{corollary}
\label{Weyl} Let $0=\lambda _{0}<\lambda _{1}^{B}\leq \lambda _{2}^{B}\leq
\ldots $ be the eigenvalues of the basic Laplacian on functions, counting
multiplicities. Then the spectral counting function $N_{B}(\lambda )$
satisfies the following asymptotic formula: 
\begin{eqnarray*}
N_{B}(\lambda ) &:&=\#\{\lambda _{m}^{B}|\lambda _{m}^{B}\leq \lambda \} \\
&\sim &\frac{V_{\text{tr}}}{(4\pi )^{\overline{q}/2}\Gamma \left( \frac{%
\overline{q}}{2}+1\right) }\lambda ^{\overline{q}/2}
\end{eqnarray*}
as $\lambda \rightarrow \infty $, where $\overline{q}$ and $V_{\text{tr}}$
are defined as above.
\end{corollary}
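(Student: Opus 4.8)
The plan is to obtain the asymptotics of $N_B$ from the short-time behaviour of $K_B(t)$ furnished by Theorem~\ref{tracebasic}, via a classical Tauberian argument, exactly as in the derivation of the ordinary Weyl law from~(\ref{trace}). Since the basic heat operator is trace class, the basic Laplacian on functions has discrete spectrum with eigenvalues of finite multiplicity tending to $+\infty$; hence $N_B$ is a well-defined nondecreasing right-continuous step function with $N_B(\lambda)<\infty$ for each $\lambda$, and
\[
K_B(t)=\sum_{m\geq 0}e^{-t\lambda_m^B}=\int_{[0,\infty)}e^{-t\lambda}\,dN_B(\lambda),
\]
so that $K_B$ is the Laplace--Stieltjes transform of $N_B$ and $dN_B$ is a positive measure on $[0,\infty)$.

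Next I would isolate the leading term of the expansion. By Theorem~\ref{tracebasic} (with, say, $J=1$), each term $a_{jk}\,t^{j/2}(\log t)^{k}$ with $j\geq 1$, as well as the error term $O\bigl(t\,(\log t)^{K_0-1}\bigr)$, is $o(1)$ as $t\to 0^{+}$, since $t^{j/2}$ decays faster than any power of $|\log t|$ grows. Therefore
\[
K_B(t)=\frac{1}{(4\pi t)^{\overline q/2}}\bigl(V_{\text{tr}}+o(1)\bigr)\sim\frac{V_{\text{tr}}}{(4\pi)^{\overline q/2}}\,t^{-\overline q/2}\qquad\text{as }t\to 0^{+},
\]
where $V_{\text{tr}}=\int_M \mathrm{Vol}(\overline{L_x})^{-1}\,\mathrm{dvol}(x)>0$ is strictly positive, being the integral of a positive function over the compact manifold $M$.

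Finally I would invoke Karamata's Tauberian theorem in the form stated in \cite[pp.~418--423]{Fel}: if $\rho>0$ and a nondecreasing function $N$ on $[0,\infty)$ has Laplace--Stieltjes transform satisfying $\int_{[0,\infty)}e^{-t\lambda}\,dN(\lambda)\sim C\,t^{-\rho}$ as $t\to 0^{+}$, then $N(\lambda)\sim \frac{C}{\Gamma(\rho+1)}\,\lambda^{\rho}$ as $\lambda\to\infty$. Taking $\rho=\overline q/2$ and $C=V_{\text{tr}}/(4\pi)^{\overline q/2}$ gives
\[
N_B(\lambda)\sim\frac{V_{\text{tr}}}{(4\pi)^{\overline q/2}\,\Gamma\!\left(\frac{\overline q}{2}+1\right)}\,\lambda^{\overline q/2}\qquad\text{as }\lambda\to\infty,
\]
which is the asserted formula.

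No genuine obstacle arises: the analytic content is entirely carried by Theorem~\ref{tracebasic} and the cited Tauberian theorem. The only points requiring a little care are bookkeeping ones — verifying that the subleading and remainder terms of the expansion are truly $o(t^{-\overline q/2})$, that the leading coefficient $V_{\text{tr}}$ is nonzero (Tauberian theorems of this type fail when the constant vanishes), and that $dN_B$ is a positive measure so that the hypotheses of Karamata's theorem are met.
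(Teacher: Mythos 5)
Your argument is exactly the paper's: the identity $K_B(t)=\sum_{j\ge 0}e^{-t\lambda_j^B}$ (from \cite[Theorem 3.5]{PaRi}), the leading term $V_{\mathrm{tr}}(4\pi t)^{-\overline q/2}$ from Theorem~\ref{tracebasic}, and Karamata's Tauberian theorem as in \cite[pp.~418--423]{Fel}. Your extra care about the logarithmic subleading terms being $o(1)$, the positivity of $V_{\mathrm{tr}}$, and the positivity of $dN_B$ is correct and fills in details the paper leaves implicit.
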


Observe that we are able to prove Theorem~\ref{tracebasic} and Corollary~\ref%
{Weyl} with very little information about the heat kernel $\widetilde{K_{W}}$%
; we used Proposition~\ref{trbasic1} and the results in \cite{BrH2} alone.
We also remark that although the expansion~(\ref{equtrace}) contains
logarithmic terms, no examples for which these terms are nonzero are known.
In the proof of this expansion \cite{BrH2}, the authors show that $G\times M$
can be decomposed into pieces over which the integral has an expansion with
possibly nonzero logarithmic terms. In the cases for which the authors
proved the nonexistence of logarithmic terms, symmetries cause the sum of
these logarithmic contributions to vanish. We make the following conjectures:

\begin{conjecture}
\label{Gconjecture} Suppose that $\Gamma $ is a compact group that acts
isometrically and effectively on a compact, connected Riemannian manifold $M$%
. Then the coefficients $a_{jk}$ of the equivariant trace formula (\ref%
{equtrace}) satisfy the following:

\begin{itemize}
\item $a_{jk}=0$ for $k>0$.

\item If $M$ is oriented and $\Gamma $ acts by orientation--preserving
isometries, then $a_{j0}=0$ for $j$ odd.
\end{itemize}
\end{conjecture}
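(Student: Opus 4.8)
The plan is to start from the integral representation
\[
L(t)=\int_{\Gamma\times M}K(t,x,x\gamma)\,\mathrm{dvol}(x)\,\chi(\gamma),
\]
where $K$ is the ordinary heat kernel of $\Delta$ on $M$, $\chi$ is the normalized Haar measure on $\Gamma$, and averaging over $\Gamma$ realizes the orthogonal projection $P$ onto $\Gamma$-invariant functions, so that $\mathrm{tr}\bigl(P\,e^{-t\Delta}\bigr)=\sum_{\lambda}e^{-\lambda t}\dim E_{\lambda}^{\Gamma}=L(t)$. Since $K(t,x,y)=O(t^{\infty})$ uniformly once $d(x,y)$ is bounded away from zero, only an arbitrarily small neighborhood of the fixed-point set $Z=\{(\gamma,x)\in\Gamma\times M:x\gamma=x\}$ contributes to the asymptotics as $t\to 0$. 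I would stratify $Z$ by orbit type: by the slice theorem a $\Gamma$-invariant tubular neighborhood of an orbit of type $(H)$ is modeled on $\Gamma\times_{H}V$ with $H$ acting orthogonally on the slice $V$, and the piece of $L(t)$ supported near that stratum reduces, after integrating out the Gaussian directions transverse to the orbit and using $K(t,x,x\gamma)\sim(4\pi t)^{-n/2}e^{-d(x,x\gamma)^{2}/4t}\bigl(u_{0}+u_{1}t+\cdots\bigr)$, to a finite sum of model integrals of the form $\int_{H}\bigl(\int_{V}(4\pi t)^{-k/2}e^{-|v-hv|^{2}/4t}\,\varphi(v)\,dv\bigr)\,dh$ with $k=\dim V$, where $\varphi$ packages the heat coefficients and the metric data along the stratum. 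Equivalently, one may take the pointwise-in-$\gamma$ viewpoint: Donnelly's local fixed-point formula gives $\mathrm{tr}(\gamma^{*}e^{-t\Delta})\sim\sum_{F}(4\pi t)^{-\dim F/2}\bigl(b_{0}^{F}+b_{1}^{F}t+\cdots\bigr)$ with the sum over the components $F$ of $\mathrm{Fix}(\gamma)$, and then $L(t)=\int_{\Gamma}\mathrm{tr}(\gamma^{*}e^{-t\Delta})\,\chi(\gamma)$ must be analyzed where $\dim\mathrm{Fix}(\gamma)$ jumps.

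The second step is to perform the inner Gaussian integral: on the subspace of $V$ where $I-h$ is invertible it contributes $(4\pi t)^{r(h)/2}/|\det(I-h)|$ (determinant on that subspace), where $r(h)=\mathrm{codim}_{V}V^{h}$, while the directions tangent to $V^{h}$ contribute an ordinary integral, convergent in the global picture, along the fixed-point manifold. The model term becomes $\int_{H}(4\pi t)^{-(k-r(h))/2}\,\Psi_{t}(h)\,|\det(I-h)|^{-1}\,dh$ with $\Psi_{t}$ smooth in $h$ and expandable in nonnegative integer powers of $t$. The map $h\mapsto r(h)$ is locally constant off a finite union of lower-dimensional ``wall'' subsets of $H$, so the only possible source of fractional powers of $t$ and of logarithms is the interaction of the weight $|\det(I-h)|^{-1}$ with the factor $(4\pi t)^{-(k-r(h))/2}$ near these walls; uniformizing, one is led to model transition integrals $\int_{\mathbb{R}^{c}}(|u|^{2}+t)^{-a}\,du$ over the $c$-dimensional normal directions to a wall, which produce a power $t^{c/2-a}$ when $2a>c$ and a logarithm precisely when $2a=c$. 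To organize this I would push everything onto a maximal torus $T_{H}$ by the Weyl integration formula, so that $|\det(I-h)|^{-1}$ is multiplied by the Weyl--Vandermonde density. The content of the first bullet is then that, after summing the finitely many strata, all of the threshold ($2a=c$) contributions cancel; I would attack this by induction on $\dim\Gamma$ (equivalently, on the number of distinct orbit dimensions), matching the contribution of a wall stratum — computed with its own slice representation — against the boundary term produced by the singular weight of the adjacent larger stratum. This is exactly the mechanism by which the authors of \cite{BrH2} remove the logarithms when $\Gamma$ has rank one or all orbits are principal, and the conjecture asserts the cancellation is general.

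For the second bullet the new geometric input is elementary: if $\gamma$ is an \emph{orientation-preserving} isometry then every component $F$ of $\mathrm{Fix}(\gamma)$ has \emph{even} codimension. Indeed $d\gamma$ is the identity on $TF$ and an orthogonal map with no eigenvalue $1$ on the normal bundle $N$, while $\det(d\gamma)=+1$ on $TM$ forces $\det(d\gamma|_{N})=+1$, and an orthogonal transformation of an odd-dimensional space with determinant $+1$ always has $+1$ as an eigenvalue; hence $\dim N$ is even. Thus in Donnelly's expansion every $\dim F$ has the same parity as $n=\dim M$, so $\mathrm{tr}(\gamma^{*}e^{-t\Delta})$ contributes only integer powers of $t$ relative to $(4\pi t)^{-n/2}$; and one checks, using the evenness of all these codimensions together with the same determinant argument applied to the slice representation of each wall stratum, that every exponent of $t$ occurring in $L(t)$ lies in $-\tfrac{m}{2}+\mathbb{Z}$. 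Hence $a_{j0}=0$ for odd $j$.

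The hard part, and the reason the statement remains a conjecture, is the inductive cancellation of the logarithmic contributions across strata in complete generality: the local heat-kernel computations, the reduction to torus integrals, and the orientation/parity bookkeeping are all routine once that cancellation is in hand, but showing that the wall singularities of all orbit-type strata cancel against one another for an arbitrary compact isometric action — rather than only in the rank-one or single-orbit-dimension cases treated in \cite{BrH2} — appears to require a uniform combinatorial description of how the slice representations of the various strata interlock, which I do not see how to extract from the present methods.
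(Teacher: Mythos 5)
This statement is presented in the paper as a \emph{conjecture}: the author offers no proof, only the heuristic (drawn from the discussion preceding it) that in the cases where Br\"uning--Heintze established the absence of logarithmic terms, the potentially logarithmic contributions from the various orbit-type strata cancel by symmetry, and that no example with a nonzero logarithmic coefficient is known. Your proposal is therefore not being measured against a proof in the paper, and, to your credit, you say explicitly in your final paragraph that you cannot close the central gap. But that gap is the entire content of the first bullet: everything up to the identification of the threshold integrals $\int_{\mathbb{R}^{c}}(|u|^{2}+t)^{-a}\,du$ with $2a=c$ as the source of logarithms is a recapitulation of the Br\"uning--Heintze machinery (slice theorem, Gaussian integration over the normal directions, Weyl integration), and the asserted inductive cancellation of these threshold contributions across adjacent strata is precisely what is unknown. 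A proof would have to exhibit the combinatorial matching between the wall singularities of one stratum's weight $|\det(I-h)|^{-1}$ and the leading contribution of the neighboring smaller stratum in full generality; stating that this ``is exactly the mechanism'' used in the rank-one and single-orbit-dimension cases does not extend it.

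The argument for the second bullet also has an unproved step. The observation that an orientation-preserving isometry has fixed-point components of even codimension is correct and standard (and the paper uses the analogous fact in Section~\ref{general} for abelian groups acting by orientation-preserving isometries, where the strata $W([H_j])$ have even codimension). However, the half-integer powers of $t$ in $L(t)$ do not come only from the dimensions of the fixed-point sets in $M$; they also come from the transition integrals over the walls inside the isotropy groups (your $t^{c/2-a}$ terms, with $c$ the codimension of a wall in $H$ or in a maximal torus), and from the interplay between $\dim F$, the orbit dimensions, and the normalization by $(4\pi t)^{-m/2}$ with $m=\dim M/\Gamma$. Your sentence ``one checks \ldots that every exponent of $t$ occurring in $L(t)$ lies in $-\tfrac{m}{2}+\mathbb{Z}$'' is an assertion of exactly the parity bookkeeping that would need to be carried out, and it is entangled with the unresolved threshold analysis of the first bullet. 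So the proposal is a reasonable and correctly oriented research plan, consistent with the paper's own motivation for the conjecture, but it does not constitute a proof of either bullet.
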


\begin{conjecture}
\label{traceconjecture} (Corollary of Conjecture~\ref{Gconjecture}) In
Theorem~\ref{tracebasic}, $a_{jk}=0$ for $k>0$. If in addition $SO\left(
q\right) $ acts by orientation preserving isometries on $W$, then $a_{j0}=0$
for $j$ odd.
\end{conjecture}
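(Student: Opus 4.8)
The plan is to obtain Conjecture~\ref{traceconjecture} as a formal consequence of Conjecture~\ref{Gconjecture}, using only the reduction already carried out in Section~\ref{setup}. By Proposition~\ref{trbasic1} and the Corollary immediately following it, $K_{B}(t)$ is precisely the equivariant heat trace --- in the sense of~(\ref{equtrace}) --- of the operator $\widetilde{\Delta_W}$ on the compact Riemannian manifold $W$ relative to the isometric action of $G=SO(q)$, restricted to $SO(q)$-invariant functions, with $m=\dim W/G=\overline{q}$ and $a_{0}=\mathrm{Vol}(W/G)=V_{tr}$. Thus the entire statement reduces to showing that the coefficients $a_{jk}$ appearing in that equivariant expansion satisfy $a_{jk}=0$ for $k>0$, and $a_{j0}=0$ for odd $j$ whenever $G$ acts on $W$ by orientation-preserving isometries.

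First I would record that, although $\widetilde{\Delta_W}$ is not the Laplacian of $W$, it is a geometrically defined, $G$-invariant second-order operator with the same principal symbol as $\Delta_W$. Indeed, by~(\ref{twistlap}) it differs from $\Delta_W$ only by the first-order term $-\frac{1}{\phi}(d\phi)\lrcorner\circ d$; the function $\phi$ is $G$-invariant by construction, so $\widetilde{\Delta_W}$ commutes with the $G$-action, and by~(\ref{metricw}) together with the identity $\widehat{\Delta}_B\,\rho^{*}=\rho^{*}\widetilde{\Delta_W}$ it is self-adjoint with respect to the $G$-invariant density $\phi\,\mathrm{dvol}_W$. Hence, by the remark following~(\ref{equtrace}), the localization argument of \cite{BrH2} applies to $\widetilde{\Delta_W}$ verbatim, writing the logarithmic coefficients $a_{jk}$ $(k>0)$ as integrals of local invariants over the lower-dimensional strata of $\{(g,w)\,|\,wg=w\}\subset G\times W$, and the odd-power coefficients $a_{j0}$ by analogous local data attached to fixed-point sets of group elements. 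The vanishings asserted in Conjecture~\ref{Gconjecture} say that these stratum contributions cancel; since that cancellation is governed by the local geometry of the $G$-action together with the scalar principal symbol --- and not by the lower-order structure of the operator --- Conjecture~\ref{Gconjecture}, read (as the remark already indicates) for any such geometrically defined operator, yields $a_{jk}=0$ for $k>0$ here.

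For the parity statement, suppose in addition that $W$ is orientable and that $SO(q)$ acts on it by orientation-preserving isometries. Then the hypotheses of the second bullet of Conjecture~\ref{Gconjecture} are met for the $G$-manifold $W$ with the operator $\widetilde{\Delta_W}$, and we conclude $a_{j0}=0$ for $j$ odd. This completes the deduction of Conjecture~\ref{traceconjecture} from Conjecture~\ref{Gconjecture}.

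The main obstacle is, of course, that Conjecture~\ref{Gconjecture} is itself open, so the argument is conditional; everything unconditional above is already contained in Section~\ref{setup}. If one wishes to rest the deduction on Conjecture~\ref{Gconjecture} exactly as stated --- for the genuine Laplacian $\Delta_W$ --- then the technical point to discharge is the passage from $\Delta_W$ to $\widetilde{\Delta_W}$: conjugating $\widetilde{\Delta_W}$ by multiplication by $\sqrt{\phi}$ produces a unitarily equivalent, $G$-invariant Schr\"odinger operator $\Delta_W+V$, with $V$ a fixed $G$-invariant smooth function built from $d\log\phi$, and one must check via the Duhamel expansion that inserting the scalar potential $V$ contributes only integer powers of $t$ to the local equivariant heat invariants, hence produces no new logarithmic terms and no new odd powers of $t^{1/2}$, so that the vanishing for $\Delta_W$ transfers to $\widetilde{\Delta_W}$. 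I expect this perturbation bookkeeping to be routine but to occupy most of a careful write-up; the genuine difficulty is entirely concentrated in Conjecture~\ref{Gconjecture} itself.
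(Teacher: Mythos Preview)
The statement is a \emph{conjecture}, not a theorem, and the paper offers no proof. The parenthetical ``(Corollary of Conjecture~\ref{Gconjecture})'' is the paper's entire argument: via Proposition~\ref{trbasic1} and its Corollary, $K_B(t)$ is the equivariant trace of $\widetilde{\Delta_W}$ for the $SO(q)$-action on $W$, so the coefficients $a_{jk}$ of Theorem~\ref{tracebasic} are literally the coefficients of the expansion~(\ref{equtrace}) for that action, and Conjecture~\ref{Gconjecture} --- read, per the remark following~(\ref{equtrace}), for any $G$-invariant operator with the same principal symbol as the Laplacian --- gives the claimed vanishings. Your deduction follows exactly this route.

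Where you go further than the paper is in flagging the gap between Conjecture~\ref{Gconjecture} \emph{as literally stated} (for $\Delta$) and what is actually needed (for $\widetilde{\Delta_W}$), and in sketching the $\sqrt{\phi}$-conjugation plus Duhamel argument to bridge it. That is a legitimate technical point the paper simply elides by leaning on the remark after~(\ref{equtrace}). Your perturbation sketch is plausible and would indeed be routine, though strictly speaking it is not needed: the paper evidently intends Conjecture~\ref{Gconjecture} in the generality of that remark, in which case your first two paragraphs already suffice and the third is a bonus. One minor omission: you might note explicitly that $SO(q)$ acts effectively on $W$ and that $W$ is compact and connected, so the hypotheses of Conjecture~\ref{Gconjecture} are met; the paper records effectiveness just after Theorem~\ref{tracebasic}.
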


We remark that since our foliation is transversally oriented, $SO\left(
q\right) $ acts by orientation preserving isometries precisely when the leaf
closures are all transversally orientable. The $SO\left( q\right) $ action
is not always orientation preserving, as Example~\ref{exnonorientable} shows.

\section{Formulas for the Basic Heat Invariants in Special Cases}

\label{special}We will now explicitly derive the asymptotics of the integral
in Proposition~\ref{trbasic1} in some special cases.

\subsection{Regular Closure\label{finite}}

Suppose that $(M,\mathcal{F})$ has \textit{regular closure}. In other words,
assume that the leaf closures all have the same dimension. Note that this
implies that all of the leaves and leaf closures have finite holonomy. In
this case, the orbits of $SO\left( q\right) $ or $O\left( q\right) $ (and
thus the leaf closures of $(M,\mathcal{F})$) all have the same dimension,
and the space of leaf closures is a Riemannian orbifold. Since the orbits
all have the same dimension, locally defined functions of the metric along
the orbits are smooth (bounded) functions on the basic manifold, and the
volumes of the orbits (and hence the volumes of the leaf closures) are
bounded away from zero. Therefore, the coefficients in the asymptotic
expansion for $K_{B}(t,x,x)$ found in \cite{Ri2} are bounded on the foliated
manifold $M$, but the error term is not necessarily bounded. These local
expressions cannot in general be integrated over $M$ to yield the
asymptotics of the trace of the basic heat operator, as in the method used
to obtain the asymptotic expansion of the trace of the ordinary heat
operator on a manifold, as described in the introduction. Instead, a
calculation of the trace of a second order operator on an orbifold is
required.

Note that the leaf closures of $(M,\mathcal{F})$ themselves form a
Riemannian foliation $(M,\mathcal{F}^{c})$, in which all leaves are compact.
In general the basic Laplacian $\Delta _{B}$ on functions satisfies $\Delta
_{B}=P_{B}\delta d$, where $d$ is the exterior derivative, $\delta $ is the $%
L^{2}$ adjoint of $d$, and $P_{B}$ is the orthogonal projection of $L^{2}$
functions to the $L^{2}$ of basic functions (see \cite{PaRi}). Since the
projection $P_{B}$ on functions is identical for both foliations $(M,%
\mathcal{F})$ and $(M,\mathcal{F}^{c})$, the basic Laplacian on basic
functions of the foliation $(M,\mathcal{F})$ is the same as the basic
Laplacian $\Delta _{B}^{c}$ on basic functions of the foliation $(M,\mathcal{%
F}^{c})$ . Note that the equivalent statement for the basic Laplacian on
forms is false.

Thus, it suffices to solve the problem of calculating the basic heat kernel
asymptotics for the case of closed leaves. Let $p:M\rightarrow N=M\slash 
\mathcal{F}^{c}$ be the quotient map, which is a Riemannian submersion away
from the leaf closures with holonomy. Similar to the arguments in Section %
\ref{setup}, the basic Laplacian on functions satisfies 
\begin{equation*}
\Delta _{B}\circ p^{\ast }=p^{\ast }\circ \left( \Delta ^{N}-\frac{d\psi }{%
\psi }\lrcorner \circ d\right) ,
\end{equation*}
where $\Delta ^{N}=\delta d$ is the ordinary Laplacian on the orbifold $N$,
and $\psi $ is the function on $N$ defined by $\psi \left( x\right) =\mathrm{%
Vol}\left( p^{-1}\left( x\right) \right) $ if $p^{-1}\left( x\right) $ is a
principal leaf closure and extended to be continuous (and smooth) on $N$.
Note that if $\kappa ^{c}$ is the mean curvature form of $(M,\mathcal{F}%
^{c}) $ and $P_{B}^{1}$ is the $L^{2}$ projection from one-forms to basic
one-forms, then $P_{B}^{1}\kappa ^{c}=-p^{\ast }\left( \frac{d\psi }{\psi }%
\right) $. Since $f$ is basic function on $M$ if and only if $f=p^{\ast }g$
for some function $g$ on $N$, the trace of $K_{B}(t)$ of the basic heat
kernel on functions on $M$ is the trace of $e^{-t\left( \Delta ^{N}-\frac{%
d\psi }{\psi }\lrcorner \circ d\right) }$ on functions on the orbifold $N$.
To calculate this trace, we first collect the following known results.

\begin{lemma}
(See \cite{Cha}, \cite{Gr}, \cite{Ro}) Let $L$ be a second order operator on
functions on a closed Riemannian manifold $N$ of dimension $m$, such that $%
L=\Delta +V+Z,$ where $\Delta $ is the Laplace operator, $V$ is a purely
first-order operator, and $Z$ is a zeroth order operator. Then, the heat
kernel $K_{L}\left( t,x,y\right) $ of $L$, the fundamental solution of the
operator $\frac{\partial }{\partial t}+L$, exists and has the following
properties:

\begin{enumerate}
\item Given $\varepsilon >0$, there exists $c>0$ such that if $r\left(
x,y\right) =\mathrm{dist}\left( x,y\right) >\varepsilon $, then $K_{L}\left(
t,x,y\right) =\mathcal{o}\left( e^{-c/t}\right) $ as $t\rightarrow 0$.

\item If $r\left( x,y\right) =\mathrm{dist}\left( x,y\right) $ is
sufficiently small, then as $t\rightarrow 0$, 
\begin{equation*}
K_{L}\left( t,x,y\right) =\frac{e^{-r^{2}\left( x,y\right) /4t}}{\left( 4\pi
t\right) ^{m/2}}\left( c_{0}\left( x,y\right) +c_{1}\left( x,y\right)
t+...+c_{k}\left( x,y\right) +\mathcal{O}\left( t^{k+1}\right) \right)
\end{equation*}
for any $k$, each $c_{j}\left( x,y\right) $ is smooth, and $c_{0}\left(
x,x\right) =1$. The function $c_{j}\left( x,y\right) $ is determined by the
metric and the operators $V$ and $Z$ and their derivatives, evaluated along
the minimal geodesic connecting $x$ and $y$.
\end{enumerate}
\end{lemma}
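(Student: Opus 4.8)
The plan is to prove both assertions simultaneously by the classical Minakshisundaram--Pleijel parametrix method, adapted to the perturbations $V$ and $Z$, followed by a Levi (Volterra) iteration that upgrades the parametrix to the genuine fundamental solution; this is exactly the route taken in \cite{Cha},\cite{Gr},\cite{Ro}, and I would include it only for completeness. Since $N$ is closed, fix the injectivity radius $\rho_0>0$; on the tubular neighborhood $\mathcal{U}=\{(x,y):r(x,y)<\rho_0\}$ of the diagonal the function $r^2(x,y)$ is smooth and, for each fixed $y$, the map $x\mapsto\exp_y^{-1}(x)$ is a diffeomorphism onto a ball. The whole construction lives on $\mathcal{U}$; the complement will contribute only errors of size $O(e^{-c/t})$.

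\emph{Step 1 (formal parametrix).} I would make the ansatz
\[
H_N(t,x,y)=\frac{e^{-r^2(x,y)/4t}}{(4\pi t)^{m/2}}\sum_{j=0}^N c_j(x,y)\,t^j ,
\]
apply $\partial_t+L_x=\partial_t+\Delta_x+V_x+Z_x$, use $|\nabla_x r|=1$ together with the standard identity expressing $\Delta_x(r^2)$ through the Jacobian $\Theta(x,y)$ of $\exp_y$, and collect powers of $t$. This produces the usual recursive system of first-order linear transport ODEs along the radial geodesic $\gamma$ from $y$ to $x$: one for $c_0$, whose coefficients involve only the metric (via $\Theta$) and the radial part of $V$, with normalization $c_0(y,y)=1$, and then one for each $c_j$ with inhomogeneous term $(\Delta+V+Z)c_{j-1}$. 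Each is solved by an explicit integrating factor along $\gamma$; smooth dependence of ODE solutions on parameters gives that every $c_j(x,y)$ is smooth on $\mathcal{U}$ and is determined by the metric and by the Taylor coefficients of $V$ and $Z$ along $\gamma$, as asserted. By design,
\[
(\partial_t+L_x)H_N(t,x,y)=\frac{e^{-r^2(x,y)/4t}}{(4\pi t)^{m/2}}\,t^N\,(\Delta_x c_N)(x,y)=O\!\left(t^{N-m/2}\right)
\]
uniformly on $\mathcal{U}$.

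\emph{Step 2 (cutoff and iteration).} Next I would fix $\eta\in C^\infty(N\times N)$ with $\eta\equiv1$ on $\{r\le\rho_0/3\}$ and $\eta\equiv0$ on $\{r\ge2\rho_0/3\}$, set $P_N=\eta H_N$ (extended by zero), and observe that $E_N:=(\partial_t+L_x)P_N$ is the sum of $\eta\cdot(\partial_t+L_x)H_N=O(t^{N-m/2})$ and a piece supported in $\{\rho_0/3\le r\le2\rho_0/3\}$ which carries the factor $e^{-r^2/4t}$ and is therefore $O(e^{-c/t})$. Since $c_0(y,y)=1$, one has $P_N(t,\cdot,y)\to\delta_y$ as $t\to0^+$. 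I would then define the heat kernel by the Volterra series
\[
K_L=P_N+\sum_{\ell\ge1}(-1)^{\ell}P_N*E_N^{*\ell},\qquad (A*B)(t,x,y)=\int_0^t\!\!\int_N A(t-s,x,z)\,B(s,z,y)\,\mathrm{dvol}(z)\,ds .
\]
For $N>m/2$ each convolution with $E_N$ gains a positive power of $t$, so the series converges (in $C^2$ in the $x$-variable), solves $(\partial_t+L_x)K_L=0$ with the correct $t\to0$ limit, and hence is the fundamental solution. Because $r(x,z)+r(z,y)\ge r(x,y)$, the Gaussian weights multiply through each convolution, so every correction term still carries a factor $e^{-\lambda r^2(x,y)/t}$ for some $\lambda>0$. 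Reading off the first $k+1$ terms of the $t$-expansion of $P_N$ (legitimate once $N$ is taken large) yields the near-diagonal expansion of part (2) with precisely the coefficients $c_j$ of Step 1, while on $\{r(x,y)>\varepsilon\}$ every term --- both $P_N$ and all corrections --- is $O(e^{-c/t})$, which is part (1).

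\emph{Main obstacle.} The only points requiring real care are the cut locus, i.e. that $r^2$ is smooth only on $\mathcal{U}$ --- circumvented by $\eta$ at the price of an exponentially small error --- and the uniformity in $(x,y)$ of all the remainder estimates, which is free here because $N$ is compact. The first-order term $V$ affects only the coefficients of the transport ODEs (in particular $c_0$ need no longer equal $\Theta^{-1/2}$, acquiring an exponential-of-an-integral correction), not the architecture of the argument, so the classical proof goes through essentially verbatim.
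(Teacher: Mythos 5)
Your proposal is correct and is exactly the classical Minakshisundaram--Pleijel parametrix plus Levi/Volterra iteration that the paper itself relies on: the paper gives no proof of this lemma, deferring entirely to \cite{Cha}, \cite{Gr}, \cite{Ro}, and your sketch reproduces that standard argument, correctly noting that $V$ and $Z$ only modify the transport equations and not the architecture. (The only cosmetic slip is that the residual of the order-$N$ parametrix is $t^{N}(L_x c_N)$ rather than $t^{N}(\Delta_x c_N)$, which changes nothing in the estimate.)
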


If the manifold $N$ is instead a Riemannian orbifold, the operators still
can be defined (by their definitions on the local covers using pullbacks),
and a fundamental solution to the heat equation still exists. Note that in
all such cases, the lifted operator $\widetilde{L}=\widetilde{\Delta }+%
\widetilde{V}+\widetilde{Z}$ is equivariant with respect to the local finite
group action. Since the asymptotics of the heat kernel are still determined
locally, we have the following corollary.

\begin{lemma}
Let $L$ be a second order operator on functions on a closed Riemannian
orbifold $N$ of dimension $m$, such that $L=\Delta +V+Z$, where $\Delta $ is
the Laplace operator, $V$ is a purely first order operator, and $Z$ is a
zeroth order operator. Then, the heat kernel $K_{L}\left( t,x,y\right) $ of $%
L$, the fundamental solution of the operator $\frac{\partial }{\partial t}+L$%
, exists and has the following properties:

\begin{enumerate}
\item Given $\varepsilon >0$, there exists $c>0$ such that if $r\left(
x,y\right) =\mathrm{dist}\left( x,y\right) >\varepsilon $, then $K_{L}\left(
t,x,y\right) =\mathcal{O}\left( e^{-c/t}\right) $ as $t\rightarrow 0$.

\item If $r\left( x,y\right) =\mathrm{dist}\left( x,y\right) $ is
sufficiently small, then as $t\rightarrow 0$, and if the minimal geodesic
connecting $x$ and $y$ is away from the singular set of the orbifold, then 
\begin{equation}
K_{L}\left( t,x,y\right) =\frac{e^{-r^{2}\left( x,y\right) /4t}}{\left( 4\pi
t\right) ^{m/2}}\left( c_{0}\left( x,y\right) +c_{1}\left( x,y\right)
t+...+c_{k}\left( x,y\right) +\mathcal{O}\left( t^{k+1}\right) \right)
\label{localKernelOrbifold}
\end{equation}
for any $k$, each $c_{j}\left( x,y\right) $ is smooth, and $c_{0}\left(
x,x\right) =1$.

\item Let $z$ be an element of the singular set of $N$, and let $H_{z}$
denote the finite group of isometries such that a neighborhood $U$ of $z$ in 
$N$ is isometric to $\widetilde{U}\slash H_{z}$, where $\widetilde{U}$ is
an open set in $\mathbb{R}^{m}$ with the given metric. Let $o\left(
H_{z}\right) $ denote the order of $H_{z}$. Assuming that the neighborhood $U
$ is sufficiently small, there exists $c>0$ such that if $x,y\in U$, then 
\begin{equation*}
K_{L}\left( t,x,y\right) =\frac{1}{o\left( H_{z}\right) }\sum_{h\in H_{z}}%
\widetilde{K_{L}}\left( t,x,hy\right) +\mathcal{O}\left( e^{-c/t}\right) ,
\end{equation*}%
where $\widetilde{K_{L}}$ is the heat kernel of the lifted operator $%
\widetilde{\Delta }+\widetilde{V}+\widetilde{Z}$ on $\widetilde{U}$, which
itself satisfies an asymptotic expansion as in (\ref{localKernelOrbifold})
above.
\end{enumerate}
\end{lemma}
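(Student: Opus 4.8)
The plan is to construct the orbifold heat kernel by a local-to-global parametrix argument, to deduce properties~(1) and~(2) directly from the manifold case (the preceding lemma), and to obtain the averaging formula~(3) by symmetrizing the lifted heat kernel over the local isotropy group and invoking a localization principle. For existence: since $N$ is a closed Riemannian orbifold, cover it by finitely many orbifold charts $U_\alpha=\widetilde{U}_\alpha/H_\alpha$ with $\widetilde{U}_\alpha\subset\mathbb{R}^m$ carrying a metric and the lifted operator $\widetilde{L}_\alpha=\widetilde{\Delta}_\alpha+\widetilde{V}_\alpha+\widetilde{Z}_\alpha$, which has the same principal symbol as $\widetilde{\Delta}_\alpha$ and is $H_\alpha$-equivariant. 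Extend each $\widetilde{L}_\alpha$ to a closed Riemannian manifold (by doubling or a standard cutoff), so that the preceding lemma yields a heat kernel $\widetilde{K}_\alpha$ with the off-diagonal decay of (1) and the local expansion~(\ref{localKernelOrbifold}). On $U_\alpha$ set $S_\alpha(t,x,y)=\frac{1}{o(H_\alpha)}\sum_{h\in H_\alpha}\widetilde{K}_\alpha(t,\widetilde{x},h\widetilde{y})$ for chosen lifts $\widetilde{x},\widetilde{y}$; equivariance makes $S_\alpha$ invariant under $H_\alpha$ separately in each variable, hence well defined on $U_\alpha\times U_\alpha$. Patching the $S_\alpha$ with a partition of unity subordinate to $\{U_\alpha\}$ gives a parametrix for $\partial_t+L$ on $N$, and the usual Levi (Duhamel) iteration, which uses only the local structure and is therefore insensitive to the orbifold singularities, corrects it to the genuine heat kernel $K_L$ with an error of order $O(t^\infty)$. (Alternatively, $L$ is, up to multiplication by a positive function, self-adjoint and nonnegative on the appropriate $L^2$ space of orbifold functions, with discrete spectrum, so $e^{-tL}$ exists and $K_L$ is its Schwartz kernel, smooth away from the singular set.)

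Properties~(1) and~(2) then follow quickly. Off-diagonal exponential decay is inherited from the $\widetilde{K}_\alpha$ through the parametrix together with the $O(t^\infty)$ correction, and equivalently from finite-propagation-speed estimates for the wave operator of $L$, which are local and descend to $N$. For~(2), if the minimal geodesic from $x$ to $y$ avoids the singular set, a tubular neighborhood of that geodesic is isometric to an open subset of a Riemannian manifold on which $L$ is an honest operator of the stated type, so the preceding lemma applies verbatim, giving~(\ref{localKernelOrbifold}) with the $c_j$ the manifold heat coefficients determined by $g$, $V$, $Z$ along the geodesic.

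The core step is~(3). Work in a small chart $U=\widetilde{U}/H_z$ around the singular point $z$, and let $\widetilde{K_L}$ be the heat kernel of $\widetilde{L}$ on $\widetilde{U}$; since $\widetilde{L}$ is $H_z$-equivariant, $\widetilde{K_L}(t,h\widetilde{x},h\widetilde{y})=\widetilde{K_L}(t,\widetilde{x},\widetilde{y})$, so the symmetrized kernel $S(t,\widetilde{x},\widetilde{y})=\frac{1}{o(H_z)}\sum_{h\in H_z}\widetilde{K_L}(t,\widetilde{x},h\widetilde{y})$ descends to $U\times U$. It solves $(\partial_t+L_x)S=0$ on $U$ because each summand solves the equation on $\widetilde{U}$ and solving the heat equation for functions on $U$ is the same as solving it for $H_z$-invariant functions on $\widetilde{U}$; and it has the correct initial behaviour, which one verifies by lifting the test integral to $\widetilde{U}$, using that $p^{*}f$ is $H_z$-invariant, substituting $\widetilde{y}\mapsto h^{-1}\widetilde{y}$ in each summand, and appealing to the known initial condition of $\widetilde{K_L}$, the factor $\frac{1}{o(H_z)}$ being exactly what balances the normalization of the volume element of $N$ over $U$. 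Finally, because $\widetilde{U}$ is merely an open set, $\widetilde{K_L}$ agrees with the heat kernel of any complete extension of $\widetilde{L}$ up to $O(e^{-c/t})$ on a slightly smaller neighborhood (off-diagonal decay again), so $S$ is an exact solution modulo $O(e^{-c/t})$; by the localization/uniqueness principle for the heat equation — two exact solutions with the same initial data differ by a solution of the homogeneous problem, which vanishes by the $L^2$ energy estimate — we get $K_L(t,x,y)=S(t,x,y)+O(e^{-c/t})$ on $U$, and the asymptotic expansion of $\widetilde{K_L}$ itself is just~(\ref{localKernelOrbifold}) applied on $\widetilde{U}$.

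I expect the delicate point to be this localization/uniqueness step: because $N$ is not a manifold one cannot quote the classical uniqueness theorem directly, so one must argue through the globally patched kernel (or the self-adjoint realization) while keeping the isotropy-group combinatorics and the orbifold volume normalization consistent throughout, since an error in the latter would shift the formula by a factor of $o(H_z)$.
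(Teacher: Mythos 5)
Your proposal is correct in outline and in fact supplies considerably more argument than the paper does: the paper's entire justification is the remark that the lifted operator $\widetilde{\Delta}+\widetilde{V}+\widetilde{Z}$ is equivariant under the local finite group action, that the heat asymptotics are determined locally, and that ``the results above are well-known'' for $V=Z=0$ (citing \cite{D1}, \cite{D2}) ``but they are true in the generality stated.'' Your construction --- symmetrize the lifted kernel over each local isotropy group, patch with a partition of unity, correct by Levi/Duhamel iteration, and then localize --- is exactly the standard Donnelly-type argument being invoked, and nothing in that machinery uses $V=Z=0$, so the extension to lower-order terms is legitimate. Parts (1) and (2) follow as you say, since off-diagonal decay and the Minakshisundaram--Pleijel expansion along a geodesic avoiding the singular set are purely local statements. (Your parenthetical alternative via self-adjointness is the only fragile piece of the existence argument --- a general first-order term need not be conjugate to a self-adjoint operator by a positive weight --- but the parametrix route does not need it.)

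One concrete caution on the step you yourself flag as delicate. If you carry out your verification of the initial condition explicitly, with the standard orbifold convention $\int_{U}g\,\mathrm{dvol}_{N}=\frac{1}{o(H_{z})}\int_{\widetilde{U}}p^{\ast}g\,\mathrm{dvol}$, the substitution $\widetilde{y}\mapsto h^{-1}\widetilde{y}$ shows that it is $\sum_{h\in H_{z}}\widetilde{K_{L}}(t,\widetilde{x},h\widetilde{y})$ \emph{without} the prefactor that reproduces $f(x)$: the $\frac{1}{o(H_{z})}$ coming from the measure is already cancelled by the $o(H_{z})$ terms of the sum and cannot be cancelled a second time by a prefactor on the kernel. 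So your assertion that the factor $\frac{1}{o(H_{z})}$ ``is exactly what balances the normalization of the volume element'' is the one claim that does not survive a careful check; as written, your symmetrized kernel reproduces $\frac{1}{o(H_{z})}f(x)$. Note that the paper's own subsequent trace computation is consistent with the unnormalized sum: the single factor $\frac{1}{o(H_{i})}$ appearing there arises from converting $\int_{N_{i}}\mathrm{dvol}_{N}$ into $\int_{\widetilde{N_{i}}}\mathrm{dvol}$, not from the kernel formula. This is bookkeeping rather than structure, but since part (3) feeds exact constants into Theorem~\ref{orbTheorem}, the convention must be pinned down: either drop the prefactor from the kernel identity or interpret the right-hand side against the unreduced measure on $\widetilde{U}$.
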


The results above are well-known and well-utilized in the cases where $V=Z=0$
(see, for example, \cite{D1}, \cite{D2}), but they are true in the
generality stated.

Next, we establish an estimate and a trigonometric identity.

\begin{lemma}
\label{integralAsymptGamma}Given $a>0$ and $b\in \mathbb{N}$, we have 
\begin{eqnarray*}
\int_{0}^{\varepsilon }e^{-\frac{x^{2}a^{2}}{t}}x^{b}dx &=&\int_{0}^{\infty
}e^{-\frac{x^{2}a^{2}}{t}}x^{b}dx-\int_{\varepsilon }^{\infty }e^{-\frac{%
x^{2}a^{2}}{t}}x^{b}dx \\
&=&\frac{\Gamma \left( \frac{b+1}{2}\right) }{2a^{b+1}}t^{\frac{b+1}{2}}+%
\mathcal{O}\left( \left( \frac{\varepsilon ^{2}a^{2}}{t}\right) ^{\frac{b-1}{%
2}}e^{-\frac{\varepsilon ^{2}a^{2}}{t}}\right) ~ \\
&=&\frac{\Gamma \left( \frac{b+1}{2}\right) }{2a^{b+1}}t^{\frac{b+1}{2}}+%
\mathcal{O}\left( \left( \frac{t}{\varepsilon ^{2}a^{2}}\right) ^{N}\right) ~%
\text{as }t\rightarrow 0,
\end{eqnarray*}
for any $N\geq \frac{b+1}{2}$.
\end{lemma}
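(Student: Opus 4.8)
The plan is to reduce the whole estimate to the classical Gaussian moment integral by a single rescaling, and then to control the tail by iterated integration by parts. The first displayed equality is just the trivial decomposition $\int_0^\varepsilon=\int_0^\infty-\int_\varepsilon^\infty$ and needs no argument; everything reduces to (i) evaluating the full integral exactly and (ii) bounding the tail $\int_\varepsilon^\infty$.

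For (i), I would substitute $u=xa/\sqrt t$, so that $dx=(\sqrt t/a)\,du$ and $x^2a^2/t=u^2$, giving
\[
\int_0^\infty e^{-x^2a^2/t}x^b\,dx=\Bigl(\tfrac{t}{a^2}\Bigr)^{(b+1)/2}\int_0^\infty e^{-u^2}u^b\,du .
\]
The remaining one-variable integral is a standard Gamma integral: the substitution $v=u^2$ turns it into $\tfrac12\int_0^\infty e^{-v}v^{(b-1)/2}\,dv=\tfrac12\Gamma\!\bigl(\tfrac{b+1}{2}\bigr)$, which produces the main term $\Gamma\!\bigl(\tfrac{b+1}{2}\bigr)(2a^{b+1})^{-1}t^{(b+1)/2}$.

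For (ii), the same substitution gives $\int_\varepsilon^\infty e^{-x^2a^2/t}x^b\,dx=(t/a^2)^{(b+1)/2}\int_R^\infty e^{-u^2}u^b\,du$ with $R:=\varepsilon a/\sqrt t$, so it suffices to estimate $I_b(R):=\int_R^\infty e^{-u^2}u^b\,du$ for large $R$. Writing $u^be^{-u^2}=u^{b-1}\cdot ue^{-u^2}$ and integrating by parts, using $ue^{-u^2}=-\tfrac12(e^{-u^2})'$, yields for $b\ge 1$ the recursion $I_b(R)=\tfrac12R^{b-1}e^{-R^2}+\tfrac{b-1}{2}I_{b-2}(R)$, together with the direct bounds $I_0(R)\le\tfrac1{2R}e^{-R^2}$ and $I_1(R)=\tfrac12e^{-R^2}$. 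Iterating the recursion down to $I_0$ or $I_1$ shows $I_b(R)=\mathcal{O}(R^{b-1}e^{-R^2})$ as $R\to\infty$, the leading term being $\tfrac12R^{b-1}e^{-R^2}$. Substituting $R=\varepsilon a/\sqrt t$ and multiplying by $(t/a^2)^{(b+1)/2}$ gives the tail bound $\mathcal{O}\!\bigl((\varepsilon^2a^2/t)^{(b-1)/2}e^{-\varepsilon^2a^2/t}\bigr)$, which is the second stated form of the error. For the third form, note that for the fixed constant $c=\varepsilon^2a^2>0$ the quantity $(c/t)^{(b-1)/2}e^{-c/t}$ decays faster than any power of $t$ as $t\to 0$, hence equals $\mathcal{O}\!\bigl((t/(\varepsilon^2a^2))^N\bigr)$ for every $N$; the restriction $N\ge\tfrac{b+1}{2}$ in the statement is harmless and is imposed only so that the remainder is not advertised as larger than the main term.

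There is no real obstacle here; the only points requiring mild care are that the integration-by-parts recursion must be tracked through both parities of $b$ (it terminates at $I_0$ or $I_1$, and in both cases the surviving leading term is $\tfrac12R^{b-1}e^{-R^2}$), and that one should keep the tail estimate uniform in the parameters long enough to see that the exponential factor $e^{-\varepsilon^2a^2/t}$ swamps every algebraic power of $t$ introduced by the rescaling — which is exactly what makes the three displayed forms of the remainder equivalent as $t\to 0$.
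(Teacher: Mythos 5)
Your proof is correct and follows essentially the same route as the paper: both reduce the main term to the Gamma integral by rescaling and identify the error with the tail of an incomplete Gamma function, the only difference being that you derive the tail asymptotic $\int_{R}^{\infty}e^{-u^{2}}u^{b}\,du=\mathcal{O}\bigl(R^{b-1}e^{-R^{2}}\bigr)$ by an integration-by-parts recursion, whereas the paper simply cites the known expansion $\Gamma(A,z)\sim e^{-z}z^{A-1}\bigl(1+\mathcal{O}(1/z)\bigr)$. Your version is marginally more self-contained and equally valid.
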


\begin{proof}
Substituting $u=\frac{x^{2}a^{2}}{t}$, or $x=$ $\frac{\sqrt{ut}}{a}$, we get%
\begin{eqnarray*}
\int_{0}^{\varepsilon }e^{-\frac{x^{2}a^{2}}{t}}x^{b}dx &=&\frac{t^{\frac{b+1%
}{2}}}{2a^{b+1}}\int_{0}^{\varepsilon ^{2}a^{2}/t}e^{-u}u^{\frac{b-1}{2}}du
\\
&=&\frac{t^{\frac{b+1}{2}}}{2a^{b+1}}\int_{0}^{\infty }e^{-u}u^{\frac{b-1}{2}%
}du-\frac{t^{\frac{b+1}{2}}}{2a^{b+1}}\int_{\varepsilon ^{2}a^{2}/t}^{\infty
}e^{-u}u^{\frac{b-1}{2}}du \\
&=&\frac{t^{\frac{b+1}{2}}\Gamma \left( \frac{b+1}{2}\right) }{2a^{b+1}}-%
\frac{t^{\frac{b+1}{2}}}{2a^{b+1}}\Gamma \left( \frac{b+1}{2},\frac{%
\varepsilon ^{2}a^{2}}{t}\right) ,
\end{eqnarray*}%
where $\Gamma \left( A,z\right) $ is the (upper) incomplete Gamma function.
It is known that $\Gamma \left( A,z\right) $ is proportional to $%
e^{-z}z^{A-1}\left( 1+\mathcal{O}\left( \frac{1}{z}\right) \right) $ as $%
\left\vert z\right\vert \rightarrow \infty $, and the formulas above follow.
\end{proof}

\begin{lemma}
\label{trigLemma}For any positive integer $k$,%
\begin{equation*}
\sum_{j=1}^{k-1}\frac{1}{\sin ^{2}\left( \frac{\pi j}{k}\right) }=\frac{%
k^{2}-1}{3}.
\end{equation*}
\end{lemma}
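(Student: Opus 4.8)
The plan is to evaluate $S_k := \sum_{j=1}^{k-1} \csc^2(\pi j/k)$ by recognizing the summand as a sum over the nontrivial $k$-th roots of unity and extracting it from a rational function. Concretely, I would start from the factorization $z^k - 1 = \prod_{j=0}^{k-1}(z - \zeta^j)$ where $\zeta = e^{2\pi i/k}$. Taking the logarithmic derivative and then differentiating once more gives an expression for $\sum_{j} (z-\zeta^j)^{-2}$ in terms of $\frac{d^2}{dz^2}\log(z^k-1)$. The idea is to evaluate this identity at $z = 1$ after first peeling off the $j=0$ term (which is singular there), so that the remaining sum $\sum_{j=1}^{k-1}(1-\zeta^j)^{-2}$ appears. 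Since $1 - \zeta^j = -2i\sin(\pi j/k)\,e^{\pi i j/k}$, we have $|1-\zeta^j|^2 = 4\sin^2(\pi j/k)$; pairing the term $j$ with $k-j$ (whose $\zeta$-value is the complex conjugate) shows $\sum_{j=1}^{k-1}(1-\zeta^j)^{-2}$ is real and equals $-\tfrac14 S_k + (\text{a correction from the phases})$, which one can sort out cleanly.

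Alternatively — and this is the route I would actually write up, since it avoids the phase bookkeeping — I would use the partial fractions expansion of $\pi^2/\sin^2(\pi z)$, namely $\frac{\pi^2}{\sin^2(\pi z)} = \sum_{n\in\mathbb{Z}} \frac{1}{(z-n)^2}$, but applied in finite form. A cleaner elementary approach: let $P(z) = \frac{z^k - 1}{z-1} = 1 + z + \cdots + z^{k-1}$, whose roots are exactly $\zeta^j$ for $j=1,\dots,k-1$. Then $\sum_{j=1}^{k-1}\frac{1}{(1-\zeta^j)^2} = \left(\frac{P'(1)}{P(1)}\right)^2 - \frac{P''(1)}{P(1)}$ by the standard Newton-type identity relating power sums of $\frac{1}{1-r_j}$ to derivatives of $P$ at $1$. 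One computes $P(1) = k$, $P'(1) = \binom{k}{2} = \frac{k(k-1)}{2}$, and $P''(1) = \sum_{j=2}^{k-1} j(j-1) = \frac{(k-1)(k-2)k}{3}$ (a routine sum). Substituting gives $\sum_{j=1}^{k-1}\frac{1}{(1-\zeta^j)^2} = \frac{(k-1)^2}{4} - \frac{(k-1)(k-2)}{3} = \frac{(k-1)(3(k-1) - 4(k-2))}{12} = \frac{(k-1)(5-k)}{12}$.

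Finally I would convert this to the desired real sum. Writing $1 - \zeta^j = -2i\sin(\pi j/k)e^{\pi i j/k}$ gives $\frac{1}{(1-\zeta^j)^2} = \frac{-e^{-2\pi i j/k}}{4\sin^2(\pi j/k)} = \frac{-\bar\zeta^{\,j}}{4\sin^2(\pi j/k)}$. Summing over $j$, pairing $j \leftrightarrow k-j$, the imaginary parts cancel and $\bar\zeta^{\,j} + \bar\zeta^{\,k-j} = 2\cos(2\pi j/k)$, so $\sum_{j=1}^{k-1}\frac{1}{(1-\zeta^j)^2} = -\frac14\sum_{j=1}^{k-1}\frac{\cos(2\pi j/k)}{\sin^2(\pi j/k)}$. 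Using $\cos(2\pi j/k) = 1 - 2\sin^2(\pi j/k)$, the right side becomes $-\frac14 S_k + \frac12(k-1)$. Equating with the computed value $\frac{(k-1)(5-k)}{12}$ and solving for $S_k$ yields $S_k = 2(k-1) - \frac{(k-1)(5-k)}{3} = \frac{(k-1)(6 - 5 + k)}{3} = \frac{(k-1)(k+1)}{3} = \frac{k^2-1}{3}$, as claimed. The main obstacle is purely bookkeeping: getting the phase factors $e^{\pi i j/k}$ right and confirming the elementary identity $\sum_{j=1}^{k-1}\frac{1}{(1-r_j)^2} = \left(\frac{P'(1)}{P(1)}\right)^2 - \frac{P''(1)}{P(1)}$ for $P(z) = \prod_j(z-r_j)$, which follows by differentiating $\frac{P'}{P} = \sum_j \frac{1}{z - r_j}$ once more and evaluating at $z=1$.
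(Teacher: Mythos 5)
Your proposal is correct, and in fact it supplies something the paper does not: the paper's ``proof'' of Lemma~\ref{trigLemma} is only the remark ``Many thanks to George Gilbert. Contact the author for a proof,'' so there is no argument in the text to compare against. Your second (written-up) route is sound throughout. The key identity $\sum_{j=1}^{k-1}(1-\zeta^j)^{-2}=\bigl(P'(1)/P(1)\bigr)^2-P''(1)/P(1)$ for $P(z)=1+z+\cdots+z^{k-1}$ follows, as you say, by differentiating $P'/P=\sum_j(z-\zeta^j)^{-1}$; the values $P(1)=k$, $P'(1)=\tfrac{k(k-1)}{2}$, $P''(1)=\tfrac{k(k-1)(k-2)}{3}$ are right, giving $\tfrac{(k-1)(5-k)}{12}$. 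The phase bookkeeping also checks out: $(1-\zeta^j)^2=-4\sin^2(\pi j/k)\,\zeta^j$, the $j\leftrightarrow k-j$ pairing makes the sum real (with the self-paired $j=k/2$ term already real when $k$ is even), and $\cos(2\pi j/k)=1-2\sin^2(\pi j/k)$ converts the result into $-\tfrac14 S_k+\tfrac{k-1}{2}$, from which $S_k=\tfrac{k^2-1}{3}$. A quick check at $k=2,3,4$ gives $1,\tfrac83,5$, matching the formula. The only cosmetic issue is a sign convention in your first paragraph ($\frac{d}{dz}\sum_j(z-\zeta^j)^{-1}=-\sum_j(z-\zeta^j)^{-2}$), but the version you actually use in the computation has the signs right.
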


\begin{proof}
Many thanks to George Gilbert. Contact the author for a proof.
\end{proof}

\medskip The goal is evaluate the asymptotics of 
\begin{equation*}
K_{B}\left( t\right) =\mathrm{tr}\left( \left. e^{-tP}\right\vert
_{C^{\infty }\left( N\right) }\right) =\int_{N}K_{P}\left( t,x,x\right) ~%
\mathrm{dvol}
\end{equation*}%
as $t\rightarrow 0$, where $P=\Delta ^{N}-\frac{d\psi }{\psi }\lrcorner
\circ d$, but we first proceed with calculating the heat trace of a general
operator $L$ as in (\ref{localKernelOrbifold}) on an orbifold. We now
decompose $N$ as follows. Given an element $z\in N$, let $H_{z}$ denote a
subgroup of the orthogonal group $O\left( \dim N\right) $ such that every
sufficiently small metric ball around $z$ is isometric to a ball in $%
H_{z}\backslash \left( \mathbb{R}^{\dim N},g\right) $, where $g$ is an $H_{z}$%
-invariant metric. The conjugacy class $\left[ H_{z}\right] $ in $O\left(
\dim N\right) $ is called isotropy type of $z$. The stratification of $N$ is
a partition of $N$ into the different isotropy types. The partial order on
these isotropy types is defined as in the general $G$-manifold structure
(see Section \ref{general}). Let $o\left( H_{z}\right) $ denote the order of 
$H_{z}$.

As in Section \ref{general}, we decompose $N$ into pieces which include
tubular neighborhoods of parts of the singular strata of the orbifold and
the principal stratum (for which $H_{z}=\left\{ e\right\} $) minus the other
neighborhoods. We may further decompose $N=N_{\varepsilon }\cup \coprod
N_{j}$ as a finite disjoint union, where each $N_{j}\ $is of the form $%
H_{j}\backslash \widetilde{N_{j}}$ with $\widetilde{N_{j}}$ contractible, no
nontrivial element of $H_{j}$ fixing all of $\widetilde{N_{j}}$, and with at
least one point of $\widetilde{N_{j}}$ having isotropy $H_{j}$. We may think
of $N_{j}$ as a tubular neighborhood of an open subset of a singular
stratum, up to sets of measure zero.

Given any isometry $h\in H_{i}\setminus \left\{ e\right\} $, choose a
tubular neighborhood 
\[
U_{h,\varepsilon }^{i}=T_{\varepsilon }\left(
H_{j}\backslash \left( \widetilde{N_{i}}\right) ^{h}\right) \cap N_{i}
\]
 of
the local submanifold $S_{i}^{h}=\left( H_{j}\backslash \widetilde{N_{i}}%
\right) ^{h}$ of singular points fixed by $h$, and let $\widetilde{%
U_{h,\varepsilon }^{i}}=T_{\varepsilon }\left( \left( \widetilde{N_{i}}%
\right) ^{h}\right) \cap \widetilde{N_{i}}$ denote the local cover. Let $%
\widetilde{U_{e,\varepsilon }^{i}}$ $=\widetilde{N_{i}}$. 
\begin{equation*}
N_{\varepsilon }=N\setminus \bigcup\limits_{i}\left( \bigcup\limits_{h\in
H_{i}\setminus \left\{ e\right\} }U_{h,\varepsilon }^{i}\right) .
\end{equation*}%
Then there exists $c>0$ such that 
\begin{gather*}
K_{L}\left( t\right) +\mathcal{O}\left( e^{-c/t}\right) \\
=\int_{N_{\varepsilon }}K_{L}\left( t,x,x\right) ~\mathrm{dvol}+\sum_{i}%
\frac{1}{o\left( H_{i}\right) }\sum_{h\in H_{i}}\int_{\widetilde{%
U_{h,\varepsilon }^{i}}}\widetilde{K_{L}}\left( t,x,hx\right) ~\mathrm{dvol}
\\
=\frac{1}{\left( 4\pi t\right) ^{m/2}}\left( \int_{N_{\varepsilon }}%
\mathrm{dvol}+t\int_{N_{\varepsilon }}c_{1}\left( x,x\right) \,\mathrm{dvol}+%
\mathcal{O}\left( t^{2}\right) \right) \\
+\sum_{i}\frac{1}{o\left( H_{i}\right) }\left( \int_{\widetilde{N_{i}}}%
\mathrm{dvol}+t\int_{\widetilde{N_{i}}}c_{1}\left( x,x\right) \,\mathrm{dvol}%
+\mathcal{O}\left( t^{2}\right) \right) \\
+\sum_{i}\frac{1}{o\left( H_{i}\right) \left( 4\pi t\right) ^{m/2}}%
\sum_{h\in H_{i}\setminus \left\{ e\right\} }\int_{\widetilde{%
U_{h,\varepsilon }^{i}}}e^{-r^{2}\left( x,hx\right) /4t}\left( c_{0}\left(
x,hx\right) +\mathcal{O}\left( t^{1}\right) \right) ~\mathrm{dvol}
\end{gather*}%
Note that if $h\neq e$, $S_{i}^{h}$ is a disjoint union of connected
submanifolds $S_{i,j}^{h}$ codimension $d_{h}^{i,j}>0$. Then, following a
calculation in \cite{D1}, we may rewrite the last integral in geodesic
normal coordinates $x$. If $B_{\varepsilon }^{i,j}\left( y\right) $ denotes
the normal exponential ball of radius $\varepsilon $ at $y\in \widetilde{%
S_{i,j}^{h}}$, its volume form $\mathrm{dvol}_{B_{\varepsilon }^{i,j}}$
satisfies $\mathrm{dvol}_{B_{\varepsilon }^{i,j}}=\left( 1+\mathcal{O}\left(
\left\vert x\right\vert ^{2}\right) \right) ~dx$, and the volume form $%
\mathrm{dvol}$ on $\widetilde{U_{h,\varepsilon }^{i}}$ satisfies $\mathrm{%
dvol}=$ $\left( 1+\mathcal{O}\left( \left\vert x\right\vert ^{2}\right)
\right) ~\mathrm{dvol}_{B_{\varepsilon }^{i,j}}~\mathrm{dvol}_{\widetilde{%
S_{i,j}^{h}}}$ for each $j$. 
\begin{multline*}
\int_{\widetilde{U_{h,\varepsilon }^{i}}}e^{-r^{2}\left( x,hx\right) \slash
4t}\left( c_{0}\left( x,hx\right) +\mathcal{O}\left( t^{1}\right) \right) ~%
\mathrm{dvol} \\
=\sum_{j}\int_{\widetilde{S_{i,j}^{h}}}\int_{B_{\varepsilon
}^{i,j}}e^{-r^{2}\left( x,hx\right) \slash 4t}\left( 1+\mathcal{O}\left(
r^{2}\right) +\mathcal{O}\left( t\right) \right) ~\mathrm{dvol}%
_{B_{\varepsilon }^{i,j}}~\mathrm{dvol}_{\widetilde{S_{i,j}^{h}}} \\
=\sum_{j}\int_{\widetilde{S_{i,j}^{h}}}\int_{B_{\varepsilon
}^{i,j}}e^{-r^{2}\left( x,hx\right) \slash 4t}\left( 1+\mathcal{O}\left(
\left\vert x\right\vert ^{2}\right) +\mathcal{O}\left( t\right) \right) ~dx~%
\mathrm{dvol}_{\widetilde{S_{i,j}^{h}}} \\
=\sum_{j}\int_{\widetilde{S_{i,j}^{h}}}\int_{\left( I-h\right)
B_{\varepsilon }^{i,j}}e^{-r^{2}\left( u+hx\left( u\right) ,hx\left(
u\right) \right) \slash 4t}\left( 1+\mathcal{O}\left( \left\vert
u\right\vert ^{2}\right) +\mathcal{O}\left( t\right) \right) \cdot\\
\left\vert
\det \left( I-h\right) ^{-1}\right\vert du~\mathrm{dvol}_{\widetilde{%
S_{i,j}^{h}}},
\end{multline*}%
using the change of variables $u=\left( I-h\right) x$. Further (see \cite{D1}%
), there is a change of variables $y\left( u\right) $ such that $u_{j}=y_{j}+%
\mathcal{O}\left( \left\vert y\right\vert ^{3}\right) $ and $r^{2}\left(
u+hx\left( u\right) ,hx\left( u\right) \right) =\sum y_{j}^{2}$, and the
Jacobian for this change of variables is $1+\mathcal{O}\left( \left\vert
y\right\vert ^{2}\right) $. Thus, 
\begin{multline*}
\int_{\widetilde{U_{h,\varepsilon }^{i}}}e^{-r^{2}\left( x,hx\right)
\slash 4t}\left( c_{0}\left( x,hx\right) +\mathcal{O}\left( t^{1}\right)
\right) ~\mathrm{dvol} \\
=\left\vert \det \left( I-h\right) ^{-1}\right\vert \sum_{j}\int_{%
\widetilde{S_{i,j}^{h}}}\int_{y\left( \left( I-h\right) B_{\varepsilon
}^{i,j}\right) }e^{-\left\vert y\right\vert ^{2}\slash 4t}\cdot\\
\left( 1+\mathcal{%
O}\left( \left\vert y\right\vert ^{2}\right) +\mathcal{O}\left( t\right)
\right) ~dy~\mathrm{dvol}_{\widetilde{S_{i,j}^{h}}}.
\end{multline*}%
By Lemma \ref{integralAsymptGamma}, we have 
\begin{multline*}
\int_{y\left( \left( I-h\right) B_{\varepsilon }^{i,j}\right)
}e^{-\left\vert y\right\vert ^{2}\slash 4t}\left( 1+\mathcal{O}\left(
\left\vert y\right\vert ^{2}\right) +\mathcal{O}\left( t\right) \right) ~dy\\
=\int_{\mathbb{R}^{d_{h}^{i,j}}}e^{-\left\vert y\right\vert ^{2}\slash
4t}dy+\mathcal{O}\left( t^{\left( d_{z}+2\right) \slash 2}\right) \\
=\left( 2\cdot \frac{\Gamma \left( \frac{1}{2}\right) }{2\left( \frac{1}{2}%
\right) }\right) ^{d_{h}^{i,j}}t^{d_{h}^{i,j}\slash 2}+\mathcal{O}\left(
t^{\left( d_{h}^{i,j}+2\right) \slash 2}\right) \\
=\left( 4\pi t\right) ^{d_{h}^{i,j}\slash 2}+\mathcal{O}\left( t^{\left(
d_{h}^{i,j}+2\right) \slash 2}\right) .
\end{multline*}%
Thus, 
\begin{multline*}
\int_{\widetilde{U_{h,\varepsilon }^{i}}}e^{-r^{2}\left( x,hx\right) \slash
4t}\left( c_{0}\left( x,hx\right) +\mathcal{O}\left( t^{1}\right) \right) ~%
\mathrm{dvol}\\
=\left\vert \det \left( I-h\right) ^{-1}\right\vert \sum_{j}%
\mathrm{vol}\left( \widetilde{S_{i,j}^{h}}\right) \left( 4\pi t\right)
^{d_{h}^{i,j}\slash 2}+\mathcal{O}\left( t^{\left( d_{h}^{i,j}+2\right)
\slash 2}\right)
\end{multline*}

\vspace{1pt}Hence, letting $\varepsilon $ approach zero and summing up over
the neighborhoods of the singular strata of the orbifold $N$, we have 
\begin{gather}
K_{L}\left( t\right) =\frac{1}{\left( 4\pi t\right) ^{m/2}}\left( \int_{N}%
\mathrm{dvol}+t\int_{N}c_{1}\left( x,x\right) \,\mathrm{dvol}\right.\notag\\
+\mathcal{O}%
\left( t^{2}\right) \biggm) \notag \\
+\frac{1}{\left( 4\pi t\right) ^{m/2}}\sum_{i}\frac{1}{o\left(
H_{i}\right) }\sum_{h\in H_{i}\setminus \left\{ e\right\}
}\sum_{j}\left\vert \det \left( I-h\right) ^{-1}\right\vert \mathrm{vol}%
\left( \widetilde{S_{i,j}^{h}}\right) \left( 4\pi t\right)
^{d_{h}^{i,j}\slash 2}\notag\\
+\mathcal{O}\left( t^{\left( d_{h}^{i,j}+2\right)
\slash 2}\right)  \notag \\
=\frac{1}{\left( 4\pi t\right) ^{m/2}}\left( \mathrm{vol}\left( N\right)
+t\int_{N}c_{1}\left( x,x\right) \,\mathrm{dvol}\right)   \notag\\
+\frac{1}{\left( 4\pi t\right) ^{m/2}}\left( \sum_{i}\frac{1}{o\left(
H_{i}\right) }\sum_{h\in H_{i}\setminus \left\{ e\right\}
}\sum_{d_{h}^{i,j}=1,2}\left\vert \det \left( I-h\right) ^{-1}\right\vert 
\mathrm{vol}\left( \widetilde{S_{i,j}^{h}}\right) \left( 4\pi t\right)
^{d_{h}^{i,j}\slash 2}\right)  \notag\\
+\mathcal{O}\left( t^{\left( 3-m\right) \slash 2}\right) .
\label{asymptExpOrb1}
\end{gather}%
Note that $\widetilde{S_{i,j}^{h}}$ has codimension 1 precisely when $h$
acts as a reflection, in which case $\left\vert \det \left( I-h\right)
^{-1}\right\vert =\left( 1-\left( -1\right) \right) ^{-1}=\frac{1}{2}$.
Similarly, $\widetilde{S_{i,j}^{h}}$ has codimension 2 exactly when $h$ acts
as a rotation (say by $\theta _{h}=\frac{2\pi }{k}$ for some $k\in \mathbb{Z}%
_{>0}$) in the normal space to $\widetilde{S_{i,j}^{h}}$. In that case, 
\begin{multline*}
\left\vert \det \left( I-h\right) ^{-1}\right\vert =\left\vert \det \left( 
\begin{array}{cc}
1-\cos \theta _{h} & -\sin \theta _{h} \\ 
\sin \theta _{h} & 1-\cos \theta _{h}%
\end{array}%
\right) ^{-1}\right\vert\\
 =\frac{1}{2-2\cos \left( \theta _{h}\right) }=\frac{%
1}{4\sin ^{2}\left( \frac{\theta _{h}}{2}\right) }.
\end{multline*}%
If this number is summed over all nontrivial elements of a cyclic group
group of rotations by $\left\{ \frac{2\pi }{k},\frac{4\pi }{k},...,\frac{%
2\left( k-1\right) \pi }{k}\right\} $, by Lemma \ref{trigLemma} we have

\begin{equation*}
\sum_{j=1}^{k-1}\frac{1}{4\sin ^{2}\left( \frac{\pi j}{k}\right) }=\frac{%
k^{2}-1}{12}.
\end{equation*}%
In each of these cases, generic points $z$ of $S_{i,j}^{h}$ have isotropy
subgroups $H_{z}$ isomorphic to a cyclic group.

To obtain the asymptotic expansion, we identify two subsets of the singular
part of the orbifold:%
\begin{eqnarray*}
\Sigma _{\mathrm{ref}}N &=&\left\{ z\in N:H_{z}~\text{has order }2\text{ and
is generated by a reflection}\right\} , \\
\Sigma _{k}N &=&\big\{ z\in N:H_{z}\text{ is a cyclic group of order }k \\
&&\qquad \text{ and consists of rotations in a plane}\big\} .
\end{eqnarray*}%
Note that 
\begin{eqnarray*}
\mathrm{vol}^{m-1}\left( N_{i}\cap \Sigma _{\mathrm{ref}}N\right)  &=&\frac{2%
}{o\left( H_{i}\right) }\sum\limits_{h\text{ reflection}}\mathrm{vol}%
^{m-1}\left( \widetilde{S_{i}^{h}}\right)\\
 &=&\sum\limits_{h\text{ reflection}}%
\mathrm{vol}^{m-1}\left( S_{i}^{h}\right) , \\
\mathrm{vol}^{m-2}\left( N_{i}\cap \Sigma _{k}N\right)  &=&\frac{k}{o\left(
H_{i}\right) }\sum\limits_{h\text{ rotation of }\frac{2\pi }{k}}\mathrm{vol}%
^{m-2}\left( \widetilde{S_{i}^{h}}\right) \\
=\sum\limits_{h\text{ rotation of }%
\frac{2\pi }{k}}\mathrm{vol}^{m-2}\left( S_{i}^{h}\right)  
&=&\sum\limits_{h\text{ rotation by }\frac{2\pi j}{k}}\mathrm{vol}%
^{m-2}\left( S_{i}^{h}\right) \text{ for fixed }j\text{.}
\end{eqnarray*}%
We now combine the results above with (\ref{asymptExpOrb1}) to obtain the
following theorem.

\begin{theorem}\label{orbTheorem}
Let $L$ be a second order operator on functions on a closed Riemannian
orbifold $N$ of dimension $m$, such that $L=\Delta +V+Z$, where $\Delta $ is
the Laplace operator, $V$ is a purely first order operator, and $Z$ is a
zeroth order operator. Then, the trace of the heat operator has the
following asymptotic expansion as $t\rightarrow 0$:%
\begin{eqnarray*}
K_{L}\left( t\right)  &=&\frac{1}{\left( 4\pi t\right) ^{m/2}}\Biggm(\mathrm{%
vol}^{m}\left( N\right) +t^{1/2}\frac{\sqrt{\pi }}{2}\mathrm{vol}%
^{m-1}\left( \Sigma _{\mathrm{ref}}N\right)  \\
&&+t\left( \int_{N}c_{1}\left( x,x\right) \,\mathrm{dvol}+\frac{\pi \left(
k^{2}-1\right) }{3k}\mathrm{vol}^{m-2}\left( \Sigma _{k}N\right) \right) +%
\mathcal{O}\left( t^{3/2}\right) \Biggm),
\end{eqnarray*}%
where $c_{1}\left( x,x\right) $ is the heat trace coefficient from formula (%
\ref{localKernelOrbifold}) and $\Sigma _{\mathrm{ref}}N$ and $\Sigma _{k}N$
are parts of the singular stratum of the orbifold defined in the paragraph
above.
\end{theorem}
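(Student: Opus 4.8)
The plan is to assemble the expansion from the machinery already set up before the statement. All the substantive analysis is in place: by part~(1) of the local orbifold heat kernel lemma the off-diagonal and mismatched-stratum contributions to $K_L(t,x,x)$ are $\mathcal{O}(e^{-c/t})$; the decomposition $N = N_\varepsilon \cup \coprod_i N_i$ with $N_i = H_i\backslash\widetilde{N_i}$, together with the tubular neighborhoods $U_{h,\varepsilon}^i$ of the fixed-point sets, reduces $K_L(t)$ up to $\mathcal{O}(e^{-c/t})$ to the finite sum of integrals displayed in~(\ref{asymptExpOrb1}); and the Gaussian integrals over the normal directions of each fixed submanifold $\widetilde{S_{i,j}^h}$ have already been evaluated using Lemma~\ref{integralAsymptGamma}. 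So the remaining job is organizational: group the terms of~(\ref{asymptExpOrb1}) by the codimension $d_h^{i,j}$ of the fixed strata, compute the constants $\left|\det(I-h)^{-1}\right|$ in the cases $d_h^{i,j} = 0,1,2$, and push everything with $d_h^{i,j}\ge 3$ (together with the higher-order remainders) into the error term.

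First I would isolate the $d_h^{i,j} = 0$ contribution, namely the principal part $N_\varepsilon$ together with the $h = e$ pieces of each $N_i$, which --- using that $\frac{1}{o(H_i)}\int_{\widetilde{N_i}}g\,\mathrm{dvol} = \int_{N_i}g\,\mathrm{dvol}$ for functions $g$ pulled back from $N_i$ and that $N = N_\varepsilon\cup\coprod N_i$ --- yields $(4\pi t)^{-m/2}\left(\mathrm{vol}^m(N) + t\int_N c_1(x,x)\,\mathrm{dvol}\right)$ in the limit $\varepsilon\to 0$. Next, for $d_h^{i,j} = 1$ the element $h$ acts as a reflection, so $\left|\det(I-h)^{-1}\right| = \frac{1}{2}$; combining this with the prefactor $(4\pi t)^{1/2}/(4\pi t)^{m/2}$ and the identity $\mathrm{vol}^{m-1}(N_i\cap\Sigma_{\mathrm{ref}}N) = \frac{2}{o(H_i)}\sum_{h\text{ reflection}}\mathrm{vol}^{m-1}(\widetilde{S_i^h})$ established above collapses this block to $(4\pi t)^{-m/2}\,\frac{\sqrt{\pi}}{2}\,t^{1/2}\,\mathrm{vol}^{m-1}(\Sigma_{\mathrm{ref}}N)$. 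For $d_h^{i,j} = 2$ the element $h$ is a rotation by some $\theta_h = \frac{2\pi}{k}$ in the normal plane, so $\left|\det(I-h)^{-1}\right| = \frac{1}{4\sin^2(\theta_h/2)}$; since the generic isotropy along such a stratum is cyclic of order $k$, I would sum over its $k-1$ nontrivial rotations and invoke Lemma~\ref{trigLemma} to get $\sum_{j=1}^{k-1}\frac{1}{4\sin^2(\pi j/k)} = \frac{k^2-1}{12}$, which together with the prefactor $(4\pi t)/(4\pi t)^{m/2}$ and $\mathrm{vol}^{m-2}(N_i\cap\Sigma_k N) = \frac{k}{o(H_i)}\sum_{h\text{ rotation}}\mathrm{vol}^{m-2}(\widetilde{S_i^h})$ produces $(4\pi t)^{-m/2}\,\frac{\pi(k^2-1)}{3k}\,t\,\mathrm{vol}^{m-2}(\Sigma_k N)$, summed over the finitely many $k$ with $\Sigma_k N\ne\emptyset$. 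Finally I would check that all strata of codimension $\ge 3$ contribute at order $(4\pi t)^{-m/2}\mathcal{O}(t^{3/2})$, as do the $\mathcal{O}(t^{(d+2)/2})$ remainders from the $d_h^{i,j} = 1,2$ strata and the $\mathcal{O}(t^2)$ remainder from the principal block, so that all of these absorb into a single $(4\pi t)^{-m/2}\mathcal{O}(t^{3/2})$ error.

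The step I expect to be the main obstacle is the combinatorial bookkeeping in the $d_h^{i,j} = 1$ and $2$ blocks: a single connected component of a singular stratum of $N$ may appear as $\widetilde{S_{i,j}^h}$ for several group elements $h$ living in a priori different local isotropy groups $H_i$, and one must verify that the normalizing weights $\frac{1}{o(H_i)}$ recombine with the volume identities for $\Sigma_{\mathrm{ref}}N$ and $\Sigma_k N$ so that each stratum is counted exactly once with the correct coefficient $\frac{\sqrt{\pi}}{2}$ or $\frac{\pi(k^2-1)}{3k}$. The one other point requiring care is to arrange the decomposition $N = N_\varepsilon\cup\coprod N_i$ and the tubular neighborhoods compatibly with the stratification of $N$ (as in Section~\ref{general}), after which the argument is just assembly of the pieces above.
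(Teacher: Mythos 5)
Your proposal is correct and follows essentially the same route as the paper: the paper derives the intermediate expansion~(\ref{asymptExpOrb1}) and the volume identities for $\Sigma_{\mathrm{ref}}N$ and $\Sigma_{k}N$ in the paragraphs preceding the theorem, and then obtains the theorem exactly by the grouping-by-codimension assembly you describe, with the same constants $\left\vert \det \left( I-h\right) ^{-1}\right\vert =\frac{1}{2}$ for reflections and $\frac{1}{4\sin ^{2}\left( \theta _{h}/2\right) }$ for rotations combined via Lemma~\ref{trigLemma}. Your coefficient checks ($\frac{\sqrt{\pi }}{2}$ from $\frac{1}{4}(4\pi t)^{1/2}$ and $\frac{\pi (k^{2}-1)}{3k}$ from $\frac{k^{2}-1}{12k}\cdot 4\pi$) match the paper's.
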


Note that the truth of this theorem is easily checked in the case of a
manifold with boundary or with a manifold quotient by a finite cyclic group
of rotations. Also, note that the coefficient $c_{1}\left( x,x\right) $ may
be computed using standard methods as in \cite{Ro}. 

We now wish to apply this result to the foliation case. Here, $N=M\slash 
\mathcal{F}^{c}$ is the leaf closure space (a Riemannian orbifold) of a
Riemannian foliation $\left( M,\mathcal{F}\right) $ with regular closure.
The operator of note is 
\begin{equation*}
K_{B}\left( t\right) =\mathrm{tr}\left( \left. e^{-tL}\right\vert
_{C^{\infty }\left( N\right) }\right) =\int_{N}K_{L}\left( t,x,x\right) ~%
\mathrm{dvol}
\end{equation*}%
as $t\rightarrow 0$, where $L=\Delta ^{N}-\frac{d\psi }{\psi }\lrcorner
\circ d=\Delta ^{N}+H^{c}$, and $H^{c}$ is the projection of the mean
curvature vector field of the foliation of $M$ by leaf closures to the set
of projectable vector fields, which implies that it descends to a vector
field on $N$. The formula needed is $c_{1}\left( x,x\right) $; we refer to 
\cite[formula (3.6)]{Ri2} for a similar calculation, which yields in our case%
\begin{eqnarray*}
c_{1}\left( x,x\right)  &=&\frac{S\left( x\right) }{6}+\frac{\Delta ^{N}\psi
\left( x\right) }{2\psi \left( x\right) }+\frac{1}{4}\left\vert H^{c}\left(
x\right) \right\vert ^{2} \\
&=&\frac{S\left( x\right) }{6}+\frac{\Delta ^{N}\psi \left( x\right) }{2\psi
\left( x\right) }+\frac{1}{4}\left\vert \frac{d\psi }{\psi }\right\vert
^{2}\left( x\right) ,
\end{eqnarray*}%
where $S\left( x\right) $ is the scalar curvature at $x\in N$. The theorem
below follows.

\begin{theorem}
Let $\left( M,\mathcal{F}\right) $ be a Riemannian foliation with regular
closure, so that the quotient orbifold $N=M\slash \mathcal{F}^{c}$ by leaf
closures has dimension $m$. If $x\in N$ corresponds to a principal leaf
closure, let $\psi \left( x\right) $ be the volume of the leaf closure, and
extend this function to be smooth on $N$. Let $S$ denote the scalar
curvature of $N$. Further, let $\Sigma _{\mathrm{ref}}N$ be the set of
singular points of $N$ corresponding to true boundary points, and let $%
\Sigma _{k}N$ be the set of singular points of $N$ which have neighborhoods
diffeomorphic to $\mathbb{R}^{n}$ mod a planar cyclic group of rotations of
order $k$. Then the trace of the basic heat kernel on functions satisfies
the following asymptotic formula as $t\rightarrow 0$. 
\begin{multline*}
K_{B}\left( t\right)  =\frac{1}{\left( 4\pi t\right) ^{m/2}}\Biggm(\mathrm{%
vol}^{m}\left( N\right) +t^{1/2}\frac{\sqrt{\pi }}{2}\mathrm{vol}%
^{m-1}\left( \Sigma _{\mathrm{ref}}N\right)  \\
+t\left( \int_{N}\frac{S\,}{6}+\frac{\Delta ^{N}\psi }{2\psi }+\frac{1}{4}%
\left\vert \frac{d\psi }{\psi }\right\vert ^{2}\mathrm{dvol}+\frac{\pi
\left( k^{2}-1\right) }{3k}\mathrm{vol}^{m-2}\left( \Sigma _{k}N\right)
\right) +\mathcal{O}\left( t^{3/2}\right) \Biggm).
\end{multline*}
\end{theorem}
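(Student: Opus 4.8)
The plan is to specialize the general orbifold heat trace formula in Theorem~\ref{orbTheorem} to the operator $L = \Delta^N - \frac{d\psi}{\psi}\lrcorner \circ d = \Delta^N + H^c$ arising from the foliation $(M,\mathcal{F})$ with regular closure. The first step is to verify that $L$ has exactly the required form: $\Delta^N$ is the Laplace operator on the orbifold $N = M/\mathcal{F}^c$, the term $-\frac{d\psi}{\psi}\lrcorner \circ d$ is a purely first-order operator (it is $H^c$, viewed as a vector field acting by directional derivative, where $H^c$ is the projection of the mean curvature vector field of the leaf-closure foliation onto projectable vector fields, which descends to $N$), and there is no zeroth-order term, so $Z = 0$. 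Since $N$ is compact (being the continuous image of compact $M$) and Riemannian, the hypotheses of Theorem~\ref{orbTheorem} are met, and its conclusion applies verbatim with $V = H^c$ and $Z = 0$.

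Next I would identify the geometric meaning of the singular strata appearing in Theorem~\ref{orbTheorem} for the specific orbifold $N = M/\mathcal{F}^c$. The set $\Sigma_{\mathrm{ref}}N$ of reflection-type singular points is precisely what one calls the ``true boundary'' of the orbifold $N$ --- points where a local isometry acts as a reflection --- and $\Sigma_k N$ is the locus whose points have neighborhoods of the form $\mathbb{R}^m$ modulo a planar cyclic rotation group of order $k$. These descriptions are just the restatements of the definitions of $\Sigma_{\mathrm{ref}}N$ and $\Sigma_k N$ given in the paragraph before Theorem~\ref{orbTheorem}, now phrased in terms of the leaf-closure orbifold; no new argument is needed here.

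The remaining step, which carries the only real computation, is to substitute the explicit formula for $c_1(x,x)$. Here I invoke the calculation of the $t^1$-coefficient of the local heat kernel expansion for an operator of the form $\Delta + V$ with $V$ a first-order operator, following \cite[formula (3.6)]{Ri2}: for $L = \Delta^N - \frac{d\psi}{\psi}\lrcorner \circ d$ one obtains
\begin{equation*}
c_1(x,x) = \frac{S(x)}{6} + \frac{\Delta^N \psi(x)}{2\psi(x)} + \frac{1}{4}\left|\frac{d\psi}{\psi}\right|^2(x),
\end{equation*}
where $S$ is the scalar curvature of $N$; the term $\tfrac14|H^c|^2 = \tfrac14|d\psi/\psi|^2$ comes from completing the square on the first-order part, and $\tfrac{\Delta^N\psi}{2\psi}$ is the divergence-type contribution of $V$. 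Plugging this into Theorem~\ref{orbTheorem}, using $\frac{\pi(k^2-1)}{3k}$ as the coefficient of $\mathrm{vol}^{m-2}(\Sigma_k N)$ (which arises from summing $|\det(I-h)^{-1}|^{-1}$-type weights $\tfrac{1}{4\sin^2(\pi j/k)}$ over the nontrivial rotations via Lemma~\ref{trigLemma}, as already assembled in equation~(\ref{asymptExpOrb1})), and recalling that $K_B(t) = \mathrm{tr}(e^{-tL}|_{C^\infty(N)})$ by the identification of basic functions on $M$ with functions on $N$, yields exactly the stated formula. The main obstacle --- if there is one --- is making sure the transition from the heat trace of $L$ on $N$ to the basic heat trace on $M$ is airtight: this rests on the earlier observation that $f \in C^\infty_B(M)$ iff $f = p^*g$, together with the intertwining $\Delta_B \circ p^* = p^* \circ L$, both of which are established in the discussion preceding the lemmas, so the identification is immediate and the theorem follows.
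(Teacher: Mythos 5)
Your proposal is correct and follows essentially the same route as the paper: apply Theorem~\ref{orbTheorem} to $L=\Delta^{N}-\frac{d\psi}{\psi}\lrcorner\circ d=\Delta^{N}+H^{c}$ with $Z=0$, identify the basic heat trace with $\mathrm{tr}(e^{-tL}|_{C^{\infty}(N)})$ via $\Delta_{B}\circ p^{*}=p^{*}\circ L$, and substitute the value of $c_{1}(x,x)$ obtained from \cite[formula (3.6)]{Ri2}. The only difference is cosmetic: you add a brief heuristic for where the $\frac{\Delta^{N}\psi}{2\psi}$ and $\frac{1}{4}|d\psi/\psi|^{2}$ terms come from, whereas the paper simply cites the reference.
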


\subsection{Transversally Oriented, Codimension One Riemannian Foliations}

\label{codim1}

Suppose that $\left( M,\mathcal{F}\right) $ is a transversally oriented,
codimension one Riemannian foliation. In this case, the analysis of the
basic manifold is unnecessary, because the basic manifold is isometric to
the space of leaf closures. For such a foliation, either the closure of
every leaf is all of $M$, or the leaves are all compact without holonomy. In
the first case, the basic Laplacian is identically zero, so that the trace
of the basic heat operator satisfies $K_{B}\left( t\right) =1$ for every $t$%
. In the case of compact leaves, the leaves are the fibers of a Riemannian
submersion over a circle. Thus, the basic functions are pullbacks of
functions on the circle, and the basic function $v:M\rightarrow \mathbb{R}$
given by $v(x)=($ the volume of the leaf containing $x)$ is smooth on $M$
and likewise smooth in the circle coordinate. If the circle is parametrized
to have unit speed by the coordinate $s\in \lbrack 0,S)$, then the $L^{2}$
inner products on basic functions and basic one-forms are defined by 
\begin{eqnarray*}
\left\langle f,g\right\rangle &=&\int_{0}^{S}f(s)g(s)v(s)\,ds, \\
\left\langle \alpha (s)\,ds,\beta (s)\,ds\right\rangle &=&\int_{0}^{S}\alpha
(s)\,\beta (s)\,v(s)\,ds.
\end{eqnarray*}
Note that $S$ is the transverse volume $V_{tr}$ of $(M,\mathcal{F})$,
defined as in Theorem~\ref{tracebasic}. We then compute that the basic
Laplacian on functions is given by 
\begin{equation*}
\Delta _{B}f=-\frac{\partial ^{2}}{\partial s\,^{2}}f-\frac{v^{\prime }}{v}%
\frac{\partial }{\partial s}f.
\end{equation*}
Since the foliation is transversally oriented, we may assume that we have
chosen a unit normal vector field $U=\frac{\partial }{\partial s}$. An
elementary calculation shows that the \textit{total mean curvature} $H(s)$
is given by 
\begin{equation}
H(s):=-\ell \int_{L_{s}}\left\langle \mathbf{H}(x),U(x)\right\rangle \,%
\mathrm{dvol}(x)=\frac{v^{\prime }(s)}{v(s)},  \label{totalmean}
\end{equation}
where $\mathbf{H}(x)$ is the mean curvature vector field of the leaf $L_{s}$
corresponding to the coordinate $s$ and $\ell $ is the dimension of each
leaf. Recall that $\mathbf{H}$ is defined as follows. Given $x\in L_{s}$,
let $\{E_{i}\}_{i=1}^{\ell }$ be an orthonormal basis of $T_{x}L$. We define 
\begin{equation*}
\mathbf{H}(x)=\frac{1}{\ell }\sum_{i=1}^{\ell }\left( \nabla
_{E_{i}}E_{i}\right) ^{\perp },
\end{equation*}
where $\perp $ denotes the projection onto the normal space. We denote the 
\textit{mean curvature} of $L_{s}$ by $h(x)=\Vert \mathbf{H}(x)\Vert $.

In summary, the basic Laplacian on functions $\psi :\left[ 0,V_{tr}\right]
\rightarrow \mathbb{R}$ is given by 
\begin{equation}
\Delta _{B}\psi (s)=-\frac{d^{2}}{ds^{2}}\psi (s)-H(s)\frac{d}{ds}\psi (s).
\label{basic1d}
\end{equation}
The trace of the basic heat kernel may now be computed in the standard way
from this operator on the circle. Let an arbitrary point on the circle be
denoted by the coordinate $0$, and let $x$ be any other point within $\frac{%
V_{tr}}{2}$ of $0$. Following the computation in \cite[pp.69--70]{Ro}, we
get the following asymptotic expansion of the basic heat kernel $%
K_{B}(t,x,0) $ as $t\rightarrow 0$: 
\begin{equation*}
K_{B}(t,x,0)\sim \frac{e^{-x^{2}/4t}}{\sqrt{4\pi t}}\left(
u_{0}(x)+u_{1}(x)t+u_{2}(x)t^{2}+\ldots \right) ,
\end{equation*}
where $u_{0}(x)=\frac{1}{R_{-1}(x)}$ and $u_{k+1}(x)$ for $k\geq 0$ is given
by 
\begin{equation}
R_{k}(x)u_{k+1}(x)=-\int_{0}^{x}\frac{R_{k}(y)}{y}\Delta _{B}u_{k}(y)\,dy,
\label{recurse1}
\end{equation}
where for every $j\geq -1$ 
\begin{eqnarray*}
R_{j}(x) &=&x^{j+1}\exp \left( \frac{1}{2}\int_{0}^{x}H(t)\,dt\right) \\
&=&x^{j+1}\sqrt{\frac{v(0)}{v(x)}}.
\end{eqnarray*}
Then (\ref{recurse1}) becomes 
\begin{equation}
x^{k+1}\sqrt{\frac{v(0)}{v(x)}}u_{k+1}(x)=-\int_{0}^{x}y^{k}\frac{v(0)}{v(x)}%
\Delta _{B}u_{k}(y)\,dy.  \label{recurse2}
\end{equation}
Therefore, 
\begin{equation*}
u_{0}(x)=\sqrt{\frac{v(x)}{v(0)}},
\end{equation*}
and 
\begin{eqnarray*}
u_{1}(x) &=&\frac{1}{x}\sqrt{\frac{v(x)}{v(0)}}\int_{0}^{x}\sqrt{\frac{v(0)}{%
v(y)}}\left( \left( \sqrt{\frac{v(y)}{v(0)}}\right) ^{\prime \prime
}+H(y)\left( \sqrt{\frac{v(y)}{v(0)}}\right) ^{\prime }\right) \,dy \\
&=&\frac{1}{4x}\sqrt{\frac{v(x)}{v(0)}}\int_{0}^{x}\left( \frac{v^{\prime }}{%
v}\right) ^{2}+2\frac{v^{\prime \prime }}{v}\,dy \\
&=&\frac{1}{4x}\sqrt{\frac{v(x)}{v(0)}}\int_{0}^{x}\left( 2H^{\prime
}(y)+3(H(y))^{2}\right) \,dy.
\end{eqnarray*}
Taking the limit as $x\rightarrow 0$, we obtain 
\begin{eqnarray*}
u_{0}(0) &=&1 \\
u_{1}(0) &=&\frac{1}{4}\left( 2H^{\prime }(0)+3(H(0))^{2}\right) .
\end{eqnarray*}
By realizing that the coordinate $0$ was labelled arbitrarily and by
integrating the above quantities over the circle, we obtain the following
theorem:

\begin{theorem}
\label{asympcodim1} Let $(M,\mathcal{F})$ be a transversally oriented
Riemannian foliation of codimension one without dense leaves. Then the trace 
$K_{B}(t)$ of the basic heat operator has the following asymptotic expansion
as $t\rightarrow 0$. For any nonnegative integer $J$, 
\begin{equation*}
K_{B}(t)=\frac{1}{\sqrt{4\pi t}}\left( A_{0}+A_{1}t+\ldots
+A_{J}t^{J}+O\left( t^{J+1}\right) \right) ,
\end{equation*}
where 
\begin{eqnarray*}
A_{0}=V_{tr}, \\
A_{1}=\frac{3}{4}\ell ^{2}\left( \Vert h\Vert _{2}\right) ^{2},
\end{eqnarray*}
and the other basic heat invariants may be computed using the recursion
formulas and integrations described above. Here, $V_{tr}$ is the transverse
volume of the foliation, and $\Vert h\Vert _{2}$ is the $L^{2}$ norm of the
mean curvature.
\end{theorem}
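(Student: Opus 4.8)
The plan is to reduce $K_B(t)$ to the heat trace of the one--dimensional operator $\Delta_B$ on the circle of circumference $V_{tr}$ and then run the classical parametrix argument. As recalled in Section~\ref{codim1}, for a transversally oriented codimension--one Riemannian foliation without dense leaves the leaves are the fibers of a Riemannian submersion $M\to S^1$, the basic functions are exactly the pull--backs of smooth functions on $S^1$, and on $S^1$ (parametrized by arc length $s\in[0,V_{tr})$) the basic Laplacian acts by $\Delta_B\psi=-\psi''-H\psi'$. Hence $K_B(t)=\mathrm{tr}\,(e^{-t\Delta_B})$ is literally the heat trace of a Laplace--type operator on a closed $1$--manifold. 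Since the leaf space is here a smooth manifold (a circle), the expansion contains only integer powers of $t$ and no logarithmic terms; the existence of the expansion and the value $A_0=V_{tr}$ also follow from Theorem~\ref{tracebasic} (the codimension being less than four), or directly from the fact that the leading heat coefficient of $\Delta_B$, which has the same principal symbol as $-d^2/ds^2$, is $\int_0^{V_{tr}}ds=V_{tr}$.

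For the remaining coefficients I would use the parametrix of \cite[pp.~69--70]{Ro} carried out in Section~\ref{codim1}, which produces the heat kernel $K_B(t,x,y)$ on $S^1$ normalized against arc length. On the diagonal only the trivial homotopy class of loops contributes up to an error $O(e^{-c/t})$, so
\[
K_B(t)=\int_0^{V_{tr}}K_B(t,s,s)\,ds+O(e^{-c/t}).
\]
Substituting the on--diagonal expansion $K_B(t,s,s)\sim(4\pi t)^{-1/2}\sum_{j\ge 0}u_j(s)\,t^{j}$, whose coefficients come from the recursion~(\ref{recurse2}) with base point $s$, and integrating term by term (the remainder being uniform in $s$ since $H$ is smooth on the compact circle) gives $A_j=\int_0^{V_{tr}}u_j(s)\,ds$. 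The limits already computed in Section~\ref{codim1} give $u_0\equiv 1$ and $u_1(s)=\frac{1}{4}\bigl(2H'(s)+3H(s)^2\bigr)$; hence $A_0=V_{tr}$, and since $s\mapsto H(s)$ is periodic on the circle,
\[
A_1=\frac{1}{4}\int_0^{V_{tr}}\bigl(2H'(s)+3H(s)^2\bigr)\,ds=\frac{3}{4}\int_0^{V_{tr}}H(s)^2\,ds.
\]
Higher $A_j$ are then obtained from the same recursion and integration.

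It remains to rewrite $\int_0^{V_{tr}}H^2\,ds$ in terms of the mean curvature $h$. In codimension one the mean curvature vector of the leaves is normal, so it equals $\pm h\,U$; and since the leaves are the level sets of the arc--length coordinate of a Riemannian submersion, this scalar is constant along each leaf, by a first--variation--of--volume argument. Formula~(\ref{totalmean}) then reads $H(s)=\mp\ell\,h(x)$ for $x\in L_s$, so $\int_0^{V_{tr}}H^2\,ds=\ell^2(\Vert h\Vert_2)^2$ and $A_1=\frac{3}{4}\ell^2(\Vert h\Vert_2)^2$, as claimed.

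I expect the real work to lie not in any new idea but in two normalization points. First, one must check that the parametrix of Section~\ref{codim1} is normalized against $ds$ rather than against the $L^2$ measure $v(s)\,ds$ of $\Delta_B$ --- otherwise the leading term would come out as $\mathrm{Vol}(M)$ instead of $V_{tr}$ --- and correspondingly that the measure appearing in~(\ref{totalmean}) is the normalized leaf measure, so that the final passage from $H(s)$ to the pointwise $h$ really uses leafwise constancy of the mean curvature and not just Cauchy--Schwarz. Second, one must verify that the on--diagonal expansion of $K_B(t,s,s)$ holds with remainder uniform in the base point, so that it can be integrated over the circle and the trace expansion read off. Neither point is deep, but both are where the bookkeeping can go wrong.
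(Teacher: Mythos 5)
Your proposal follows the paper's own argument: the same reduction of $\Delta_B$ to the operator $-\frac{d^2}{ds^2}-H(s)\frac{d}{ds}$ on the circle of circumference $V_{tr}$, the same parametrix recursion from \cite[pp.~69--70]{Ro} yielding $u_0\equiv 1$ and $u_1(s)=\frac{1}{4}\left(2H'(s)+3H(s)^2\right)$, and the same integration over the circle (with $\int H'=0$ by periodicity) to get $A_0=V_{tr}$ and $A_1=\frac{3}{4}\int_0^{V_{tr}}H^2\,ds=\frac{3}{4}\ell^2\left(\Vert h\Vert_2\right)^2$. The normalization caveats you flag at the end (arc-length versus weighted measure, and the leafwise interpretation of $H$ in formula~(\ref{totalmean})) are legitimate bookkeeping points that the paper glosses over, but they do not change the route.
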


\begin{corollary}
Let $(M,\mathcal{F})$ be as in the theorem above. Then the spectrum of the
basic Laplacian on functions determines the $L^{2}$ norm of the mean
curvature. In particular, the foliation is minimal if and only if $A_{1}=0$
in Theorem~\ref{asympcodim1}.
\end{corollary}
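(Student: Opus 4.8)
The plan is to read off both conclusions directly from Theorem~\ref{asympcodim1}, using only that the basic heat trace is determined by the spectrum of $\Delta_B$. The starting point is that $e^{-t\Delta_B}$ is trace class, so that $K_B(t)=\mathrm{trace}\,e^{-t\Delta_B}=\sum_{j\geq 0}e^{-t\lambda_j^B}$ (see \cite[Theorem 3.5]{PaRi}); the value of the right-hand side at each $t>0$ depends only on the eigenvalue multiset $\{\lambda_j^B\}_{j\geq 0}$, so the whole function $t\mapsto K_B(t)$ on $(0,\infty)$ is determined by the spectrum of the basic Laplacian on functions. Hence any quantity extractable from the small-$t$ behaviour of $K_B(t)$ is a spectral invariant.

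The second ingredient is the elementary fact that the coefficients of an asymptotic expansion of the shape furnished by Theorem~\ref{asympcodim1}, namely $\sqrt{4\pi t}\,K_B(t)=A_0+A_1 t+\cdots+A_J t^{J}+O(t^{J+1})$ as $t\to 0^+$ (the powers of $t$ occurring are nonnegative integers, and there are no logarithmic terms in codimension one), are uniquely recovered from the function by iterated limits: $A_0=\lim_{t\to 0^+}\sqrt{4\pi t}\,K_B(t)$, $A_1=\lim_{t\to 0^+}t^{-1}\bigl(\sqrt{4\pi t}\,K_B(t)-A_0\bigr)$, and so on. Combined with the previous paragraph, every $A_j$ --- in particular $A_1$ --- is determined by the spectrum of $\Delta_B$. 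By Theorem~\ref{asympcodim1} we have $A_1=\tfrac34\,\ell^{2}\,(\Vert h\Vert_2)^2$, where $\ell$ is the (fixed) dimension of the leaves and $\Vert h\Vert_2$ is the $L^2$ norm of the mean curvature $h$. For $\ell\geq 1$ this gives $(\Vert h\Vert_2)^2=\tfrac{4}{3\ell^{2}}A_1$, so $\Vert h\Vert_2$ is determined by the spectrum (when $\ell=0$ the manifold is a circle foliated by points, $h\equiv 0$, and there is nothing to prove). This is the first assertion.

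For the second assertion, recall that $h(x)=\Vert\mathbf{H}(x)\Vert\geq 0$ is continuous on $M$, and that by definition $(M,\mathcal F)$ is minimal exactly when $\mathbf{H}\equiv 0$, i.e.\ $h\equiv 0$. Hence, for $\ell\geq 1$, $A_1=0\iff\Vert h\Vert_2=0\iff h\equiv 0\iff(M,\mathcal F)$ is minimal, while for $\ell=0$ both sides hold automatically; this is exactly the stated equivalence. There is no real obstacle in this corollary: the substantive computation --- the identification of $A_1$ as $\tfrac34\ell^2(\Vert h\Vert_2)^2$, which rests on the one-dimensional description of $\Delta_B$, the recursion for its heat kernel, and the formula $H(s)=v'(s)/v(s)$ of~(\ref{totalmean}) --- has already been carried out in Theorem~\ref{asympcodim1}; the only points here warranting care are that $K_B(t)$ genuinely depends only on the spectrum (which uses the trace-class property of $e^{-t\Delta_B}$ recorded above) and the trivial implication that a continuous, nonnegative function of vanishing $L^2$ norm is identically zero.
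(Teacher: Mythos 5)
Your proposal is correct and follows exactly the route the paper intends: the corollary is stated without a separate proof because it is immediate from the identity $K_B(t)=\sum_{j\ge 0}e^{-t\lambda_j^B}$ (so the heat trace, hence each asymptotic coefficient, is a spectral invariant) together with the formula $A_1=\tfrac34\ell^2(\Vert h\Vert_2)^2$ from Theorem~\ref{asympcodim1} and the observation that a continuous nonnegative $h$ with $\Vert h\Vert_2=0$ vanishes identically. Your write-up just makes these implicit steps explicit, which is fine.
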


\begin{remark}
The above theorem and corollary may be applied in cases of higher
codimension if all of the leaf closures are transversally oriented and have
codimension one.
\end{remark}

\subsection{Codimension One Foliations That Are Not Transversally Orientable}

\label{nonorientable} We now show how the results in the last section need
to be modified if $\left( M,\mathcal{F}\right) $ is not transversally
orientable. We will need the results of this section when we consider the
case of transversally orientable codimension two foliations whose leaf
closures are codimension one and are not necessarily transversally
orientable. If $\left( M,\mathcal{F}\right) $ is a codimension one
Riemannian foliation that is not transversally orientable, it has a double
cover $\left( \widetilde{M},\widetilde{\mathcal{F}}\right) $ that is
transversally orientable. Basic functions on $M$ correspond to basic
functions on $\widetilde{M}$ that are invariant under the orientation
reversing, isometric involution (the deck transformation). Thus the basic
Laplacian is a second order operator on a closed interval with Neumann
boundary conditions instead of a circle. Part of the analysis from the last
section is relevant, so that we obtain the following: 
\begin{eqnarray}
\Delta _{B}\psi (s) &=&-\frac{d^{2}}{ds^{2}}\psi (s)-H(s)\frac{d}{ds}\psi (s)
\notag \\
\psi ^{\prime }(0) &=&\psi ^{\prime }\left( V_{tr}\right) =0.  \label{BVP}
\end{eqnarray}
The asymptotics of the trace of the associated heat operator is a standard
problem. If $\widetilde{K}\left( t,\widetilde{s}_{1},\widetilde{s}
_{2}\right) $ is the lifted heat kernel to the circle, it corresponds to the
following differential operator on $\left[ -V_{tr},V_{tr}\right] $ $\ $with
periodic boundary conditions: 
\begin{equation*}
L\alpha \left( \widetilde{s}\right) =-\frac{d^{2}}{d\widetilde{s}^{2}}\alpha
(\widetilde{s})-\widetilde{H}(\widetilde{s})\frac{d}{d\widetilde{s}}\alpha (%
\widetilde{s}),  \label{liftedBVP}
\end{equation*}
where 
\begin{equation*}
\widetilde{H}(\widetilde{s})=\left\{ 
\begin{array}{ll}
H\left( \widetilde{s}\right) & \text{if \ }0<\widetilde{s}<V_{tr} \\ 
-H\left( -\widetilde{s}\right) & \text{if \ }-V_{tr}<\widetilde{s}<0%
\end{array}
\right. .
\end{equation*}
Note that $\widetilde{H}(\widetilde{s})$ is the logarithmic derivative of
the volume of the leaf on $\left( \widetilde{M},\widetilde{\mathcal{F}}%
\right) $ and thus extends to be a smooth function on the circle. In
particular, this implies that all even derivatives of $\ \widetilde{H}(%
\widetilde{s})$ and $H(\widetilde{s})$\ at $\widetilde{s}=0$ or $V_{tr}$ are
zero. A similar argument shows that the corresponding volume functions $%
v\left( \widetilde{s}\right) $and $\widetilde{v}\left( \widetilde{s}\right) $
have zero odd derivatives at $\widetilde{s}=0$ or $V_{tr}$. The heat kernel $%
K(t,s_{1},s_{2})$ for the original boundary value problem (\ref{BVP})
satisfies 
\begin{equation*}
K(t,s,s)=\widetilde{K}\left( t,s,s\right) +\widetilde{K}\left( t,s,-s\right)
,
\end{equation*}
We have that 
\begin{equation*}
\widetilde{K}\left( t,\widetilde{s}_{1},\widetilde{s}_{2}\right) \sim \frac{%
e^{-r^{2}/4t}}{\sqrt{4\pi t}}\left( u_{0}\left( \widetilde{s}_{1},\widetilde{%
s}_{2}\right) +u_{1}\left( \widetilde{s}_{1},\widetilde{s}_{2}\right)
t+\ldots \right) ,
\end{equation*}
where $r=$dist$\left( \widetilde{s}_{1},\widetilde{s}_{2}\right) $, and the
functions $u_{j}$ are explicitly computable from the differential equation (%
\ref{liftedBVP}). The trace is computed by the integral 
\begin{eqnarray*}
K_{B}(t) &=&\int_{0}^{V_{tr}}K(t,s,s)\,ds \\
&=&\int_{0}^{V_{tr}}\widetilde{K}\left( t,s,s\right) +\widetilde{K}\left(
t,s,-s\right) \,\,ds \\
&=&\frac{1}{2}\int_{-V_{tr}}^{V_{tr}}\widetilde{K}\left( t,s,s\right) +%
\widetilde{K}\left( t,s,-s\right) \,\,ds \\
&\sim &\frac{1}{\sqrt{4\pi t}}\left(
A_{0}+B_{0}t^{1/2}+A_{1}t+B_{1}t^{3/2}+\ldots \right) ,
\end{eqnarray*}
where $A_{j}$ is defined as in the oriented case, and $B_{j}$ depends on the
derivatives of $u_{j}$ evaluated at $(0,0)$ and $\left( V_{tr},V_{tr}\right) 
$. The first nontrivial coefficients in the formula are: 
\begin{eqnarray*}
A_{0} &=&\frac{1}{2}\int_{-V_{tr}}^{V_{tr}}u_{0}\left( \widetilde{s},%
\widetilde{s}\right) \,d\widetilde{s} \\
B_{0} &=&\frac{\sqrt{\pi }}{2}\left( u_{0}\left( 0,0\right) +u_{0}\left(
V_{tr},V_{tr}\right) \right) \\
A_{1} &=&\frac{1}{2}\int_{-V_{tr}}^{V_{tr}}u_{1}\left( \widetilde{s},%
\widetilde{s}\right) \,d\widetilde{s} \\
B_{1} &=&\frac{\sqrt{\pi }}{2}\left( u_{1}\left( 0,0\right) +u_{1}\left(
V_{tr},V_{tr}\right) \right) \\
&&+\frac{\sqrt{\pi }}{8}\left( \partial _{1}\partial _{1}-2\partial
_{1}\partial _{2}+\partial _{2}\partial _{2}\right) u_{0}\left( 0,0\right) \\
&&+\frac{\sqrt{\pi }}{8}\left( \partial _{1}\partial _{1}-2\partial
_{1}\partial _{2}+\partial _{2}\partial _{2}\right) u_{0}\left(
V_{tr},V_{tr}\right)
\end{eqnarray*}
From the calculations in the transversally oriented case, we have, 
\begin{eqnarray*}
u_{0}(s_{1},s_{2}) &=&\sqrt{\frac{v(s_{1})}{v(s_{2})}} \\
u_{1}(s_{1},s_{2}) &=&\frac{1}{4\left( s_{1}-s_{2}\right) }\sqrt{\frac{%
v(s_{1})}{v(s_{2})}}\int_{s_{2}}^{s_{1}}\left( 2\widetilde{H}^{\prime }(y)+3(%
\widetilde{H}(y))^{2}\right) \,dy,
\end{eqnarray*}
which after some calculation implies that 
\begin{eqnarray*}
u_{0}\left( \widetilde{s},\widetilde{s}\right) &=&1 \\
u_{1}\left( \widetilde{s},\widetilde{s}\right) &=&\frac{1}{4}\left( 2%
\widetilde{H}^{\prime }(\widetilde{s})+3(\widetilde{H}(\widetilde{s}%
))^{2}\right) \\
\left( \partial _{1}\partial _{1}-2\partial _{1}\partial _{2}+\partial
_{2}\partial _{2}\right) u_{0}\left( 0,0\right) &=&0 \\
\left( \partial _{1}\partial _{1}-2\partial _{1}\partial _{2}+\partial
_{2}\partial _{2}\right) u_{0}\left( V_{tr},V_{tr}\right) &=&0
\end{eqnarray*}
These equations imply that 
\begin{eqnarray*}
A_{0} &=&V_{tr} \\
B_{0} &=&\sqrt{\pi } \\
A_{1} &=&\frac{1}{8}\int_{-V_{tr}}^{V_{tr}}\left( 2\widetilde{H}^{\prime }(%
\widetilde{s})+3(\widetilde{H}(\widetilde{s}))^{2}\right) \,d\widetilde{s} \\
&=&\frac{3}{8}\int_{-V_{tr}}^{V_{tr}}(\widetilde{H}(\widetilde{s}))^{2}\,d%
\widetilde{s}=\frac{3}{4}\int_{0}^{V_{tr}}(H(s))^{2}\,ds \\
B_{1} &=&\frac{\sqrt{\pi }}{8}\left( 2\widetilde{H}^{\prime }(0)+3(%
\widetilde{H}(0))^{2}+2\widetilde{H}^{\prime }(V_{tr})+3(\widetilde{H}%
(V_{tr}))^{2}\right) \\
&=&\frac{\sqrt{\pi }}{4}\left( \widetilde{H}^{\prime }(0)+\widetilde{H}%
^{\prime }(V_{tr})\right) =\frac{\sqrt{\pi }}{4}\left( H^{\prime
}(0)+H^{\prime }(V_{tr})\right) .
\end{eqnarray*}
In summary, we have the following theorem.

\begin{theorem}
\label{codim1nonorientable} Let $(M,\mathcal{F})$ be a Riemannian foliation
of codimension one such that the leaves are not dense and the foliation is
not transversally orientable. Then the trace $K_{B}(t)$ of the basic heat
operator has the following asymptotic expansion as $t\rightarrow 0$. 
\begin{equation*}
K_{B}(t)=\frac{1}{\sqrt{4\pi t}}\left(
A_{0}+B_{0}t^{1/2}+A_{1}t+B_{1}t^{3/2}+...\right) ,
\end{equation*}%
where 
\begin{eqnarray*}
A_{0} &=&V_{tr}, \\
B_{0} &=&\sqrt{\pi } \\
A_{1} &=&\frac{3}{4}\ell ^{2}\left( \Vert h\Vert _{2}\right) ^{2}, \\
B_{1} &=&\frac{\sqrt{\pi }}{4}\left( F(0)+F(V_{tr})\right)
\end{eqnarray*}%
and the other basic heat invariants may be computed using the techniques
described above. Here, $V_{tr}$ is the transverse volume of the foliation, $%
\Vert h\Vert _{2}$ is the $L^{2}$ norm of the mean curvature, and $%
F(0)+F(V_{tr})$ is the sum of the second normal derivatives of the logarithm
of leaf volume, evaluated at the two leaves with $\mathbb{Z}_{2}$ holonomy
(this quantity is independent of the choice of normal at any point of these
leaves).
\end{theorem}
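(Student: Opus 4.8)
The plan is to reduce the trace to the one-dimensional Neumann problem (\ref{BVP}) and then extract its short-time asymptotics by the method of images, recycling the recursion (\ref{recurse2}) from the transversally oriented case. First I would note that, since $(M,\mathcal F)$ is codimension one, not transversally orientable, and has no dense leaves, the transversally orientable double cover $(\widetilde M,\widetilde{\mathcal F})$ falls into the compact-leaf case of the dichotomy recalled in Section~\ref{codim1}: its leaves are the fibres of a Riemannian submersion over a circle, and the orientation-reversing deck involution covers a reflection of that circle with exactly two fixed points. Hence the leaf-closure space of $(M,\mathcal F)$ is the interval $[0,V_{tr}]$, whose endpoints are the two leaves with $\mathbb Z_2$ holonomy, and --- since basic functions on $M$ are exactly the deck-invariant basic functions on $\widetilde M$ --- the basic Laplacian is the Sturm--Liouville operator of (\ref{BVP}) with $H=v'/v$ the total mean curvature of (\ref{totalmean}).

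Next I would realize the heat kernel $K(t,s_1,s_2)$ of (\ref{BVP}) through the heat kernel $\widetilde K$ of the periodic operator (\ref{liftedBVP}) on $[-V_{tr},V_{tr}]$ obtained by odd reflection of $H$: modulo $O(e^{-c/t})$ one has $K(t,s,s)=\widetilde K(t,s,s)+\widetilde K(t,s,-s)$, so $K_B(t)=\tfrac12\int_{-V_{tr}}^{V_{tr}}\bigl(\widetilde K(t,s,s)+\widetilde K(t,s,-s)\bigr)\,ds$. The diagonal piece is precisely the basic heat trace of the transversally oriented foliation $(\widetilde M,\widetilde{\mathcal F})$ of Theorem~\ref{asympcodim1}; it supplies the integer-power coefficients $A_j$, and using $u_0(\widetilde s,\widetilde s)=1$, $u_1(\widetilde s,\widetilde s)=\tfrac14(2\widetilde H'+3\widetilde H^2)$, the vanishing of $\int\widetilde H'$ around the circle, (\ref{totalmean}), and Theorem~\ref{asympcodim1}, one gets $A_0=V_{tr}$ and $A_1=\tfrac38\int_{-V_{tr}}^{V_{tr}}\widetilde H^2=\tfrac34\ell^2\Vert h\Vert_2^2$.

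The half-integer coefficients come from the off-diagonal piece $\tfrac12\int_{-V_{tr}}^{V_{tr}}\widetilde K(t,s,-s)\,ds$. The key point is that $\mathrm{dist}(s,-s)$ on the circle equals $2|s|$ near $0$, equals $2|V_{tr}-s|$ near $V_{tr}$, and is bounded away from zero elsewhere, so the Gaussian factor $e^{-\mathrm{dist}(s,-s)^2/4t}$ concentrates the integral, up to $O(e^{-c/t})$, at the two fixed points of $s\mapsto -s$. Near each fixed point I would substitute $u=2s$ (the change of variables $u=(I-h)s$ with $h=-1$, contributing $|\det(I-h)^{-1}|=\tfrac12$, exactly as in the orbifold computation of Theorem~\ref{orbTheorem}), insert the local expansion of $\widetilde K$, and apply Lemma~\ref{integralAsymptGamma} term by term to the Taylor coefficients of $u_0(s,-s),u_1(s,-s),\dots$; since $\int_{\mathbb R}e^{-u^2/4t}\,du=\sqrt{4\pi t}$ cancels the $(4\pi t)^{-1/2}$ in the kernel, this leaves a series in half-integer powers of $t$ with coefficients $B_0,B_1,\dots$. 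Using $u_0(s_1,s_2)=\sqrt{v(s_1)/v(s_2)}$ and the $u_1$ formula produced by (\ref{recurse2}), together with the fact --- forced by the smoothness of $\widetilde H$ and $v$ on the circle under the odd reflection --- that all even derivatives of $\widetilde H$ and all odd derivatives of $v$ vanish at the two fixed points, one finds $u_0(0,0)=u_0(V_{tr},V_{tr})=1$, hence $B_0=\tfrac{\sqrt\pi}{2}(1+1)=\sqrt\pi$; and one checks that $(\partial_1-\partial_2)^2u_0$ vanishes at both fixed points, so $B_1$ collapses to $\tfrac{\sqrt\pi}{2}\bigl(u_1(0,0)+u_1(V_{tr},V_{tr})\bigr)=\tfrac{\sqrt\pi}{4}\bigl(\widetilde H'(0)+\widetilde H'(V_{tr})\bigr)=\tfrac{\sqrt\pi}{4}\bigl(F(0)+F(V_{tr})\bigr)$, where $F$ is the second normal derivative of $\log v$ at a $\mathbb Z_2$-holonomy leaf.

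The main obstacle is the third step: rigorously justifying the localization of the off-diagonal integral and the term-by-term Laplace asymptotics, and in particular verifying that the parity of $v$ and $\widetilde H$ at the fixed points kills the second-derivative contributions of $u_0$ to $B_1$ (and the analogous cancellations governing the later $B_j$), so that the boundary coefficients take the clean geometric form stated. The rest is bookkeeping: the diagonal term is Theorem~\ref{asympcodim1} verbatim, and (\ref{recurse2}) already produces the required $u_j$.
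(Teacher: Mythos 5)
Your proposal is correct and follows essentially the same route as the paper: the double-cover reduction to the Neumann problem (\ref{BVP}), the method of images $K(t,s,s)=\widetilde K(t,s,s)+\widetilde K(t,s,-s)$, the diagonal terms giving the $A_j$ via (\ref{recurse2}), and the off-diagonal Gaussian localization at the two fixed points giving the $B_j$, with the parity of $\widetilde H$ and $v$ at the fixed points producing exactly the cancellations the paper uses (in particular $\widetilde H(0)=\widetilde H(V_{tr})=0$ and $(\partial_1-\partial_2)^2u_0=0$ there). Your framing of the localization via the $|\det(I-h)^{-1}|=\tfrac12$ change of variables is just the explicit form of the computation the paper carries out, so there is nothing substantive to add.
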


\begin{corollary}
Let $(M,\mathcal{F})$ a codimension one Riemannian foliation. Then the
spectrum of the basic Laplacian on functions determines whether or not the
leaves are dense. If the leaves are not dense, the spectrum also determines
whether or not the foliation is transversally orientable, the $L^{2}$ norm
of the mean curvature, and the average of the second normal derivatives of
the logarithm of leaf volume at the two leaves with $\mathbb{Z}_{2}$
holonomy in the nonorientable case. In particular, the foliation is minimal
if and only if the coefficient $A_{1}=0$.
\end{corollary}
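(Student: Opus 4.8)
The plan is to read everything off the single function $K_{B}(t)=\sum_{j\ge 0}e^{-t\lambda_{j}^{B}}$, which is manifestly determined by the basic spectrum, together with the two explicit expansions already proved: Theorem~\ref{asympcodim1} in the transversally orientable case and Theorem~\ref{codim1nonorientable} in the nonorientable case. Since an asymptotic expansion in a fixed scale is unique, every coefficient occurring in those two theorems is a spectral invariant, and the corollary becomes a matter of displaying each named quantity as such a coefficient (or as an elementary limit built from $K_{B}(t)$). The structural input that makes the case analysis exhaustive is the dichotomy recalled in Section~\ref{codim1}: for a codimension one Riemannian foliation either every leaf is dense in $M$, or all leaves are compact with trivial holonomy; Theorems~\ref{asympcodim1} and \ref{codim1nonorientable} between them cover the second alternative.

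First I would treat denseness. If the leaves are dense, a continuous basic function is constant on the closure of a leaf, hence constant on the connected manifold $M$; so $\Delta_{B}\equiv 0$, the basic spectrum is $\{0\}$ with multiplicity one, and $K_{B}(t)\equiv 1$. If the leaves are not dense, then by Theorem~\ref{asympcodim1} or Theorem~\ref{codim1nonorientable} we have $K_{B}(t)\sim V_{tr}/\sqrt{4\pi t}\to\infty$ as $t\to 0$. Hence $K_{B}(t)$ is bounded near $t=0$ exactly when the leaves are dense, so the spectrum decides this. From now on assume the leaves are not dense; then $V_{tr}=\lim_{t\to 0}\sqrt{4\pi t}\,K_{B}(t)$ is already a spectral invariant, being the common leading coefficient of both expansions.

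Next I would detect transversal orientability from the next term. Put $L:=\lim_{t\to 0}\bigl(K_{B}(t)-V_{tr}/\sqrt{4\pi t}\bigr)$. In the transversally orientable case the bracket in Theorem~\ref{asympcodim1} contains only integer powers of $t$, so $L=0$; in the nonorientable case the $t^{1/2}$-coefficient is $B_{0}=\sqrt{\pi}$ by Theorem~\ref{codim1nonorientable}, so $L=B_{0}/\sqrt{4\pi}=\frac12$. Thus the spectral invariant $L$ is $0$ precisely when the foliation is transversally orientable and $\frac12$ otherwise. With $V_{tr}$ and the value of $B_{0}$ (namely $0$ or $\sqrt{\pi}$) now known, $A_{1}=\lim_{t\to 0}t^{-1/2}\bigl(\sqrt{4\pi}\,K_{B}(t)-V_{tr}t^{-1/2}-B_{0}\bigr)$ is a spectral invariant; by both theorems $A_{1}=\frac34\ell^{2}\|h\|_{2}^{2}$, and since the leaf dimension $\ell$ is fixed by the foliation this determines $\|h\|_{2}$, while in all cases $A_{1}=0\iff\|h\|_{2}=0\iff$ the leaves are minimal submanifolds, giving the minimality characterization. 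Finally, in the nonorientable case one further term of the expansion yields the spectral invariant $B_{1}=\frac{\sqrt{\pi}}{4}\bigl(F(0)+F(V_{tr})\bigr)$, where $F(0)+F(V_{tr})$ is the well-defined quantity (independent of normal choices) identified in Theorem~\ref{codim1nonorientable}; so the spectrum also determines that average of second normal derivatives of the logarithm of leaf volume.

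There is no deep obstacle here: the corollary is an assembly of facts already established. The points that do deserve an explicit word, and which I would flag, are (i) the uniqueness of asymptotic expansions in the scale $\{t^{j/2}\}$, which is exactly what upgrades ``the spectrum determines $K_{B}(t)$'' to ``the spectrum determines each coefficient $A_{j}$, $B_{j}$'', and (ii) the codimension one dichotomy between dense and compact leaves, which is the reason Theorems~\ref{asympcodim1} and \ref{codim1nonorientable} together exhaust the non-dense case and hence the reason the case analysis is complete. One mild caveat: recovering $\|h\|_{2}$ itself rather than $\ell^{2}\|h\|_{2}^{2}$ uses that $\ell$ is an invariant of the given foliation rather than something extracted from the basic spectrum; the minimality statement needs only $A_{1}=0\iff\|h\|_{2}=0$.
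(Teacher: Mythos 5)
Your proposal is correct and is exactly the argument the paper intends: the corollary is stated as an immediate consequence of Theorems~\ref{asympcodim1} and \ref{codim1nonorientable} together with the dense/compact dichotomy from Section~\ref{codim1}, and your assembly — uniqueness of the asymptotic expansion in the scale $\{t^{j/2}\}$ making $V_{tr}$, $B_0$, $A_1$, $B_1$ spectral invariants, with $B_0\in\{0,\sqrt{\pi}\}$ detecting transversal orientability — is the intended reading. Your caveat that recovering $\Vert h\Vert_2$ itself uses the leaf dimension $\ell$ as given data is a fair and worthwhile observation, but it does not affect the minimality criterion $A_1=0$.
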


\begin{remark}
The above theorem and corollary may be applied in cases of higher
codimension if all of the leaf closures have codimension one.
\end{remark}

\subsection{The General Case\label{general}}

We now prove some results that will be applied to the codimension two case
in Section~\ref{codim2}. These results are completely general and may be
used to compute the asymptotic expansion in special cases of arbitrary
codimension. We first review results that will be used in our computations.
Recall that $\widetilde{K}(t,w,v):=\widetilde{K_{W}}(t,w,v)$ is the heat
kernel corresponding to the operator $\widetilde{\Delta _{W}}$ defined in~(%
\ref{twistlap}), so that if $\varepsilon >0$ is sufficiently small, dist$%
(w,v)>\varepsilon $ implies that 
\begin{equation*}
\widetilde{K}(t,w,v)=O\left( e^{-c/t}\right)
\end{equation*}%
as $t\rightarrow 0$, for some constant $c$ (see, for example, \cite{Gr}). As
a consequence, the asymptotics of the integral in Proposition~\ref{trbasic1}
over $G\times W$ are the same as the asymptotics of the integral over $U$,
where $U$ is any arbitrarily small neighborhood of the compact subset $%
\{(g,w)\in G\times W\,|\,wg=w\}$, up to an error term of the form $O\left(
e^{-c/t}\right) $.

We will now decompose $W$ into pieces and use this decomposition to
partition a neighborhood of $\{(g,w)\in G\times W\,|\,wg=w\}$. Given an
orbit $X$ of $G$ and $w\in X$, $X$ is naturally diffeomorphic to $G/H_{w}$,
where $H_{w}=\{g\in G\,|wg=w\}$ is the (closed) isotropy subgroup. As we
mentioned before, $H_{w}$ is isomorphic to the structure group corresponding
to the principal bundle $\pi :\rho ^{-1}(w)\rightarrow \overline{L}$, where $%
\overline{L}$ is the leaf closure $\pi \left( \rho ^{-1}(w)\right) $ in $M$.
Given a subgroup $H$ of $G$, let $\left[ H\right] $ denote the conjugacy
class of $H$. The isotropy type of the orbit $X$ is defined to be the
conjugacy class $\left[ H_{w}\right] $, which is well--defined independent
of $w\in X$. There are a finite number of isotropy types of orbits in $W$
(see \cite[p.~173]{Bre}). We define the usual partial ordering (see \cite[%
p.~42]{Bre}) on the isotropy types by declaring that 
\begin{equation*}
\left[ H\right] \leq \left[ K\right] \iff H\text{ is conjugate to a subgroup
of }K.
\end{equation*}%
Let $\{\left[ H_{i}\right] :i=1,\dots ,r\}$ be the set of isotropy types
occurring in $W$, arranged so that 
\begin{equation*}
\left[ H_{i}\right] \leq \left[ H_{j}\right] \Longrightarrow i\geq j
\end{equation*}%
(see \cite[p. 51]{Kaw}). Let $W\left( \left[ H\right] \right) $ denote the
union of orbits of isotropy type $\left[ H\right] $ in $W$. The set $W\left( %
\left[ H_{i}\right] \right) $ is in general a $G$--invariant submanifold of $%
W$ (see \cite[p.~202]{Kaw}). Also, $W\left( \left[ H_{1}\right] \right) $ is
closed, and $W\left( \left[ H_{r}\right] \right) $ is open and dense in $W$ (%
\cite[p.~50,~216]{Kaw}). Thus, $W$ is the disjoint union of the submanifolds 
$W\left( \left[ H_{i}\right] \right) $ for $1\leq i\leq r$.

Now, given a proper, $G$-invariant submanifold $S$ of $W$ and $\varepsilon
>0 $, let $T_{\varepsilon }(S)$ denote the union of the images of the
exponential map at $s$ for $s\in S$ restricted to the ball of radius $%
\varepsilon $ in the normal bundle at $S$. It follows that $T_{\varepsilon
}(S)$ is also $G$-invariant. We now decompose $W$ as a disjoint union of
sets $W_{1},\dots ,W_{r}$. If there is only one isotropy type on $W$, then $%
r=1$, and we let $W_{1}=W$. Otherwise, let $W_{1}=T_{\varepsilon }\left(
W\left( \left[ H_{1}\right] \right) \right) $. For $1<j\leq r-1$, let 
\begin{equation*}
{\ W_{j}=T_{\varepsilon }\left( W\left( \left[ H_{j}\right] \right) \right)
\setminus \bigcup_{i=1}^{j-1}W_{i}}.
\end{equation*}%
Let 
\begin{equation*}
{\ W_{r}=W\setminus \bigcup_{i=1}^{r-1}W_{i}}.
\end{equation*}%
Clearly, $\varepsilon >0$ must be chosen sufficiently small in order for the
following lemma to be valid. We in addition insist that $\varepsilon $ be
chosen sufficiently small so that the asymptotic expansion for $\widetilde{%
K_{W}}(t,x,y)$ is valid if the distance from $x$ to $y$ is less than $%
\varepsilon $. The following facts about this decomposition are contained in 
\cite[pp.~203ff]{Kaw}:

\begin{lemma}
\label{decomposition}With $W$, $W_{i}$ defined as above, we have, for every $%
i\in \{1,\ldots ,r\}$:

\begin{enumerate}
\item $\displaystyle W=\bigcup_{i=1}^{r}W_{i}$.

\item $W_{i}$ is a union of $G$-orbits.

\item The closure of $W_{i}$ is a compact $G$-manifold with corners.

\item If $\left[ H_{j}\right] $ is the isotropy type of an orbit in $W_{i}$,
then $j\geq i$.

\item The distance between the submanifold $W\left( \left[ H_{j}\right]
\right) $ and $W_{i}$ for $j<i$ is at least $\varepsilon $.
\end{enumerate}
\end{lemma}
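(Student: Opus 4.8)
The plan is to derive all five assertions from the standard structure theory of smooth actions of compact Lie groups (as in \cite{Bre} and \cite{Kaw}) together with elementary bookkeeping about the nested family $W_1,\dots,W_r$. The underlying geometric input is that the metric on $W$ is $G$-invariant, so that for any closed $G$-invariant submanifold $S\subseteq W$ the normal bundle is a $G$-vector bundle and, once $\varepsilon$ is below the normal injectivity radius of $S$, the tube $T_\varepsilon(S)$ is $G$-equivariantly diffeomorphic to the open $\varepsilon$-disk bundle of that normal bundle; in particular $T_\varepsilon(S)$ is $G$-invariant and equals the metric neighborhood $\{w\in W:\mathrm{dist}(w,S)<\varepsilon\}$. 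I would fix $\varepsilon>0$ smaller than the normal injectivity radius of each of the finitely many submanifolds $W([H_1]),\dots,W([H_r])$, and small enough for the additional requirements imposed just before the lemma.

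Assertions (1) and (2) are then immediate: (1) holds by the definition of $W_r$, and (2) holds because each $W([H_j])$ is $G$-invariant (isotropy type is an orbit invariant), hence so is each $T_\varepsilon(W([H_j]))$, and $G$-invariance is preserved under unions and set differences. For (4) and (5) I would first prove, by induction on $j$, the inclusion
\[
\bigcup_{k\leq j}W_k=\Bigl(\bigcup_{k<j}W_k\Bigr)\cup T_\varepsilon(W([H_j]))\supseteq T_\varepsilon(W([H_j]))\supseteq W([H_j]),\qquad 1\leq j\leq r-1,
\]
and hence $\bigcup_{k\leq j}W_k\supseteq\bigcup_{k\leq j}W([H_k])$. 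Since the $W_i$ are pairwise disjoint by construction, for $j<i$ the set $W_i$ is disjoint from $\bigcup_{k\leq j}W_k\supseteq T_\varepsilon(W([H_j]))$, and because $T_\varepsilon(W([H_j]))$ is the $\varepsilon$-neighborhood of $W([H_j])$, the distance between $W_i$ and $W([H_j])$ is at least $\varepsilon$; this is (5). For (4) I would invoke the Slice Theorem: every point of a sufficiently small slice at $w\in W([H_j])$ has isotropy subgroup conjugate to a subgroup of $H_w$, so every orbit meeting $T_\varepsilon(W([H_j]))$ has isotropy type $[K]$ with $[K]\leq[H_j]$, which by the chosen ordering means its index is $\geq j$. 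Thus any orbit in $W_i$ with $i<r$ lies in $T_\varepsilon(W([H_i]))$ and has index $\geq i$, while $W_r\subseteq W\setminus\bigcup_{k<r}W([H_k])=W([H_r])$ consists of orbits of index $r$; in all cases the index is $\geq i$.

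The remaining assertion (3) is the only genuinely technical point. Here one must verify that deleting the open tubes $W_1,\dots,W_{i-1}$ from the closure of $T_\varepsilon(W([H_i]))$ (or, when $i=r$, from $W$ itself) leaves a smooth manifold with corners. The plan is to appeal to the equivariant tube and collar constructions of \cite[pp.~203ff]{Kaw}: each boundary $\partial\overline{T_\varepsilon(W([H_k]))}$ is the total space of an $\varepsilon$-sphere bundle, hence a smooth closed hypersurface carrying normal-exponential coordinates, and for $\varepsilon$ chosen suitably these finitely many hypersurfaces meet in general position, so that $\overline{W_i}$ is locally modelled on a product of half-lines; compactness is automatic since $W$ is closed. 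I expect the general-position bookkeeping needed to pin down the corner structure to be the main obstacle, but since precisely this decomposition with properties (1)--(5) is carried out in \cite{Kaw}, for our purposes it suffices to extract it from there; we will use (3) only to justify integrating heat-kernel asymptotics piece by piece over the $\overline{W_i}$ in the sections that follow.
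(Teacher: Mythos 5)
Your proposal is essentially consistent with the paper, which in fact offers no proof of this lemma at all: it is stated as a collection of facts "contained in \cite[pp.~203ff]{Kaw}," and the subsequent remark even allows the tubes $T_{\varepsilon}\left( W\left( \left[ H_{j}\right] \right) \right) $ to be replaced by other suitable invariant neighborhoods. So you are supplying arguments where the paper supplies only a citation, and your arguments for (1), (2), and (4) are the standard ones and are correct: equivariance of the normal exponential map gives $G$-invariance of the tubes, and the slice theorem plus the chosen ordering of isotropy types gives (4), including the case $i=r$ via $W_{r}\subseteq W\left( \left[ H_{r}\right] \right) $. For (3) you defer to the same source as the paper, which is fair, since the corner structure is the only genuinely delicate point and is exactly what \cite{Kaw} establishes.

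The one place where your argument is too quick is (5). You identify $T_{\varepsilon}\left( W\left( \left[ H_{j}\right] \right) \right) $ with the metric $\varepsilon$-neighborhood $\{w:\mathrm{dist}(w,W\left( \left[ H_{j}\right] \right) )<\varepsilon \}$, but only $W\left( \left[ H_{1}\right] \right) $ is closed; for $j\geq 2$ the stratum $W\left( \left[ H_{j}\right] \right) $ is in general not closed, its infimizing points may escape to $\overline{W\left( \left[ H_{j}\right] \right) }\setminus W\left( \left[ H_{j}\right] \right) $, and then the minimizing geodesic is not normal to $W\left( \left[ H_{j}\right] \right) $ at a point of $W\left( \left[ H_{j}\right] \right) $, so a point at distance less than $\varepsilon$ from the stratum need not lie in its normal tube. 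The gap is easily repaired by strong induction on $j$: if $\mathrm{dist}\left( w,W\left( \left[ H_{j}\right] \right) \right) <\varepsilon $, either the infimum is approached inside $W\left( \left[ H_{j}\right] \right) $, in which case $w\in T_{\varepsilon}\left( W\left( \left[ H_{j}\right] \right) \right) $, or the infimizing sequence converges to a point of some $W\left( \left[ H_{k}\right] \right) $ with $k<j$ (isotropy only increases in the limit), in which case $\mathrm{dist}\left( w,W\left( \left[ H_{k}\right] \right) \right) <\varepsilon $ and the inductive hypothesis places $w$ in $\bigcup_{l\leq k}W_{l}$. In either case $w\in \bigcup_{l\leq j}W_{l}$, which is disjoint from $W_{i}$ for $i>j$, and (5) follows. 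With that repair your proof is complete modulo the corner-structure statement, exactly as the paper intends.
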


\begin{remark}
The lemma above remains true if at each stage $T_{\varepsilon }\left(
W\left( \left[ H_{j}\right] \right) \right) $ is replaced by any
sufficiently small open neighborhood of $W(H_{j})$ that contains \linebreak $%
T_{\varepsilon }\left( W\left( \left[ H_{j}\right] \right) \right) $, that
is a union of $G$-orbits, and whose closure is a manifold with corners.
\end{remark}

Therefore, by Proposition~\ref{trbasic1}, the trace of the basic heat kernel
is given by

\begin{equation}
K_{B}(t)=\sum_{i=1}^{r}\int_{W_{i}}\int_{G}\widetilde{K}(t,w,wg)\,\chi (g)\,%
\mathrm{dvol}_{W}(w).  \label{tracepiece}
\end{equation}

Let $H_{j}$ be the isotropy subgroup of $w\in W\left( \left[ H_{j}\right]
\right) $, and let $\gamma $ be a geodesic orthogonal to $W\left( \left[
H_{j}\right] \right) $ through $w$. This situation occurs exactly when this
geodesic is orthogonal both to the fixed point set $W^{H_{j}}$ of $H_{j}$
and to the orbit $wG$ of $G$ containing $w$. For any $h\in H_{j}$, right
multiplication by $h$ maps geodesics orthogonal to $W^{H_{j}}$ through $w$
to themselves and likewise maps geodesics orthogonal to $wG$ through $w$ to
themselves. Thus, the group $H_{j}$ acts orthogonally on the normal space to 
$w\in W\left( \left[ H_{j}\right] \right) $ by the differential of right
multiplication. Observe in addition that there are no fixed points for this
action; that is, there is no element of the normal space that is fixed by
every $h\in H_{j}$. If $G=SO\left( q\right) $ acts by orientation-preserving
isometries, then $H_{j}$ acts on the normal space in the same way. Since $%
H_{j}$ acts without fixed points, the codimension of $W\left( \left[ H_{j}%
\right] \right) $ is at least two in the orientation-preserving case.

On the other hand, if the transformation group $G$ in question is abelian
and acts by orientation-preserving isometries, then the representation
theory of abelian groups implies that the representation space must be
even-dimensional (see \cite[pp.~107--110]{BrotD}). In this case, we would
then conclude that the codimension of each $W\left( \left[ H_{j}\right]
\right) $ is even. We mention this for the following reason. Since $%
\widetilde{\Delta _{W}}$ commutes with the $SO(q)$ action on the basic
manifold $W$, the integrand $\widetilde{K}(t,w,wg)$ in Proposition~\ref%
{trbasic1} is a class function, so that Weyl's integration formula may be
used to rewrite the integral as an integral over a maximal torus $T$ of $%
SO(q)$ (see \cite[pp.~163]{BrotD}). \ Formula (\ref{tracepiece}) becomes

\begin{equation}
K_{B}(t)=\sum_{i=1}^{r}\int_{W_{i}}\int_{T}\widetilde{K}(t,w,wg)\,\eta (g)\,%
\mathrm{dvol}_{W}(w),  \label{toruspiece}
\end{equation}%
where $\eta (g)$ is the volume form on $T$ multiplied by a bounded function
of $g\in T$. In the above expression, we may take $W_{i}$ to be those
constructed using $G=T$. Therefore, each $H_{j}$ is a subgroup of the torus,
and the codimension of each $W\left( \left[ H_{i}\right] \right) $ is even
in the orientation-preserving case, as has been explained previously.

As $t\rightarrow 0$, we need only evaluate the asymptotics of the integrals
in (\ref{toruspiece}) on an arbitrarily small neighborhood of the set $%
\{(g,w)\in T\times W\,|\,wg=w\}$. By the construction of $W_{i}$, the
integral over $T$ may be replaced by an integral over a small neighborhood
of $H_{i}$ in $T$. This neighborhood may be described as $N_{\varepsilon
^{\prime }}\left( H_{i}\right) =\left\{ gh\,|\,h\in H_{i}\,,g\in
B_{\varepsilon ^{\prime }}\right\} $, where $B_{\varepsilon ^{\prime }}$ is
a ball of radius $\varepsilon ^{\prime }$ centered at the identity in $\exp
_{e}H_{i}^{\perp }$. Here, $H_{i}^{\perp }$ is the normal space to $%
H_{i}\subset T$ at the identity $e$, and $\exp _{e}$ is the exponential map $%
\exp _{e}:\mathfrak{t}\rightarrow T$. We have 
\begin{equation}
K_{B}(t)=\sum_{i=1}^{r}\int_{W_{i}}\int_{B_{\varepsilon ^{\prime }}\times
H_{i}}\widetilde{K}(t,w,wgh)\,\eta ^{\prime }(g,h)\,\mathrm{dvol}%
_{W}(w)+O\left( e^{-c/t}\right) .  \label{torus2}
\end{equation}%
We can now make this integral over the torus explicit. A maximal torus of $%
SO(q)$ has dimension $\left[ \frac{q}{2}\right] $, and we define $T$ in the
following way. Let $\Theta =\left( \theta _{1},\ldots ,\theta _{m}\right)
\in (-\pi ,\pi ]^{m}$. If $q=2m$, let 
\begin{equation*}
T=\left\{ \left. M(\Theta )\,\right\vert \,\theta _{j}\in (-\pi ,\pi ]\text{
for every }j\right\} ,
\end{equation*}%
where 
\begin{equation*}
M(\Theta )=\left( 
\begin{array}{ccccc}
\cos \theta _{1} & \sin \theta _{1} & \ldots  & 0 & 0 \\ 
-\sin \theta _{1} & \cos \theta _{1} & \ldots  & 0 & 0 \\ 
\vdots  & \vdots  & \ddots  & \vdots  & \vdots  \\ 
0 & 0 & \ldots  & \cos \theta _{m} & \sin \theta _{m} \\ 
0 & 0 & \ldots  & -\sin \theta _{m} & \cos \theta _{m}%
\end{array}%
\right) 
\end{equation*}%
If $q=2m+1$, $T$ is defined similarly with 
\begin{equation*}
M(\Theta )=\left( 
\begin{array}{cccccc}
\cos \theta _{1} & \sin \theta _{1} & \ldots  & 0 & 0 & 0 \\ 
-\sin \theta _{1} & \cos \theta _{1} & \ldots  & 0 & 0 & 0 \\ 
\vdots  & \vdots  & \ddots  & \vdots  & \vdots  & \vdots  \\ 
0 & 0 & \ldots  & \cos \theta _{m} & \sin \theta _{m} & 0 \\ 
0 & 0 & \ldots  & -\sin \theta _{m} & \cos \theta _{m} & 0 \\ 
0 & 0 & \ldots  & 0 & 0 & 1%
\end{array}%
\right) .
\end{equation*}%
Then we have the following formulas (\cite[pp. 171, 219--221]{BrotD}) for
the form $\eta $ in formula (\ref{toruspiece}): 
\begin{equation*}
\eta (\Theta )=\frac{1}{m!\,2^{m-1}}\prod \left( 1-e^{\pm i\theta _{k}\pm
i\theta _{l}}\right) \,\cdot \frac{d\Theta }{(2\pi )^{m}}
\end{equation*}%
if $q=2m$, and 
\begin{equation*}
\eta (\Theta )=\frac{1}{m!\,2^{m}}\prod \left( 1-e^{\pm i\theta _{k}\pm
i\theta _{l}}\right) \prod \left( 1-e^{\pm i\theta _{j}}\right) \,\cdot 
\frac{d\Theta }{(2\pi )^{m}}
\end{equation*}%
if $q=2m+1$. Simplifying, we get 
\begin{equation*}
\eta (\Theta )=\frac{2^{m^{2}-3m+1}}{m!\,\pi ^{m}}\left( \prod_{1\leq
k<l\leq m}\left( \cos \theta _{k}-\cos \theta _{l}\right) ^{2}\right)
\,d\Theta 
\end{equation*}%
if $q=2m$, and 
\begin{equation*}
\eta (\Theta )=\frac{2^{m^{2}-2m}}{m!\,\pi ^{m}}\left( \prod_{1\leq k<l\leq
m}\left( \cos \theta _{k}-\cos \theta _{l}\right) ^{2}\right) \,\left(
\prod_{1\leq j\leq m}\left( 1-\cos \theta _{j}\right) \right) \,d\Theta 
\end{equation*}%
if $q=2m+1$. Equation (\ref{toruspiece}) becomes 
\begin{equation}
K_{B}(t)=\sum_{i=1}^{r}\int_{W_{i}}\int_{\left( -\pi ,\pi \right) ^{m}}%
\widetilde{K}(t,w,w\,M\left( \Theta \right) )\,\,\eta (\Theta )\,\mathrm{dvol%
}_{W}(w),  \label{torustrace}
\end{equation}%
using the appropriate choice of $\eta (\Theta )$\ above. The explicit
description of $\,\eta (\Theta )$\ may be used to make equation (\ref{torus2}%
) more explicit as well, but additional calculations and a choice of
coordinates on $\exp _{e}H_{i}^{\perp }$ are necessary.

\subsection{Codimension Two Riemannian Foliations}

\label{codim2}

We now explicitly derive the coefficients in the asymptotic expansion of the
trace of the basic heat operator on transversally oriented Riemannian
foliations of codimension two. We have the following possibilities:

\begin{enumerate}
\item (The trivial case.) The closure of every leaf of $\left( M,\mathcal{F}%
\right) $ is the manifold $M$. In this case, the basic functions are
constants, and the basic heat operator is the identity. Therefore, the trace 
$K_{B}\left( t\right) $ of the basic heat operator satisfies $K_{B}\left(
t\right) =1$ for every $t$.

\item Every leaf of $\left( M,\mathcal{F}\right) $ is closed. Then the
results of Section \ref{finite} apply. The basic manifold $W$ is
three-dimensional. \ Given $w\in W$, the $SO\left( 2\right) $ orbit $X$ of $%
w $ is a circle.

\item Each leaf closure of \ $\left( M,\mathcal{F}\right) $ has codimension
one. If \ $\overline{\mathcal{F}}$ denotes the collection of leaf closures
of $\left( M,\mathcal{F}\right) $, $\left( M,\overline{\mathcal{F}}\right) $
is a Riemannian foliation of codimension one. Observe that the basic
functions, $L^{2}$ inner products, and basic Laplacians for $\left( M,%
\mathcal{F}\right) $ and $\left( M,\overline{\mathcal{F}}\right) $\ are the
same, so that we have reduced to the nontrivial codimension one cases. If
the leaf closure foliation is transversally orientable, see Section~\ref%
{codim1}. If the leaf closure foliation is not transversally orientable, see
Section~\ref{nonorientable}.

\item The leaf closures of \ $\left( M,\mathcal{F}\right) $ have minimum
codimension one, but some leaf closures have codimension two. This situation
is the most interesting case that arises. At least one of the orbits has
finite isotropy. Thus, the basic manifold $W$ is a closed two-manifold with
an \ $SO\left( 2\right) $ action whose orbits have two different dimensions.
The circular orbits correspond to the codimension one leaf closures, and the
(isolated) fixed points correspond to the codimension two leaf closures.
Because the group action yields a vector field on $W$ that has index $1$ at
each fixed point, the Euler characteristic of $W$ must be a positive
integer. Therefore, $W$ is a sphere or a projective plane; for simplicity we
consider only the case where $W=S^{2}$. The metric is a function of the
height (orbit) multiplied by the standard metric on the sphere.\ \ The space
of leaf closures of $\left( M,\mathcal{F}\right) $\ is a closed interval.
This case could be considered as a one-dimensional problem (as in Section~%
\ref{codim1}), but the analysis is quite difficult because the mean
curvature of the leaf closures goes to infinity at the leaves with infinite
holonomy. Instead, we use the approach of Section~\ref{general}. The
circular orbits have orbit type $\left( \left\{ e\right\} \right) $, and the
fixed points have orbit type $\left( SO\left( 2\right) \right) $. Since $%
\left( \left\{ e\right\} \right) \leq \left( SO\left( 2\right) \right) $, we
can decompose $W=W_{1}\cup W_{2}$ as in Lemma~\ref{decomposition}, where $%
W_{1}$ is the union of two metric $\varepsilon $-disks centered at the fixed
points and $W_{2}=W\setminus W_{1}$. The maximal torus of $SO\left( 2\right) 
$ is itself, and equation (\ref{torustrace}) shows that the trace of the
basic heat operator is 
\begin{eqnarray}
K_{B}(t) &=&\int_{W_{1}}\int_{-\pi }^{\pi }\widetilde{K}(t,w,w\,M\left(
\theta \right) )\,\frac{d\theta }{2\pi }\,\,\,\mathrm{dvol}_{W}(w)  \notag \\
&+&\int_{W_{2}}\int_{-\pi }^{\pi }\widetilde{K}(t,w,w\,M\left( \theta
\right) )\,\,\frac{d\theta }{2\pi }\,\,\mathrm{dvol}_{W}(w),
\label{sphereparts}
\end{eqnarray}
where $M\left( \theta \right) =\left( 
\begin{array}{cc}
\cos \theta & -\sin \theta \\ 
\sin \theta & \cos \theta%
\end{array}
\right) \allowbreak $.\newline
We now state a result from \cite{Ri2}:
\end{enumerate}

\begin{theorem}
(in \cite{Ri2}) \label{oldthm} Let $\psi :M\rightarrow \mathbb{R}$ be the
smooth basic function defined by setting $\psi (x)$ equal to the volume of
any leaf closure of $\widehat{M}$ which intersects the fiber $\pi ^{-1}(x)$.
Let $q_{x}$ denote the codimension of the leaf closure $\overline{L}_{x}$
containing $x$ in $M$. Then, as $t\rightarrow 0$, we have the following
asymptotic expansion for any positive integer $k$: 
\begin{equation*}
K_{B}(t,x,x)=\frac{1}{(4\pi t)^{{}^{q_{x}/2}}}\left(
a_{0}(x)+a_{1}(x)t+\ldots +a_{k}(x)t^{k}+O\left( t^{k+1}\right) \,\right) ,
\end{equation*}%
where 
\begin{equation*}
a_{i}(x)=\sum \frac{2^{l}K_{m_{1}\ldots m_{l}}^{jl}(x)\,I_{m_{1}\ldots
m_{l}}^{Q\,l}}{\mathrm{Vol}\left( \overline{L}_{x}\right) \pi ^{Q/2}}.
\end{equation*}%
The constants $I_{m_{1}\ldots m_{l}}^{Q\,l}$ are defined by
integrals, and the functions 
$K_{m_{1}\ldots m_{l}}^{jl}(x)$ are determined
by the methods in \cite{Ri2}. The integer $Q$ is codimension of the
intersection of the leaf closure containing $\hat{x}$ with $\pi ^{-1}(x)$ in 
$\pi ^{-1}(x)$. The functions $a_{i}(x)$ are determined by the local
geometry of the foliation at $x\in M$ and by $\psi (x)$. In particular, $%
a_{0}(x)=\frac{1}{\mathrm{Vol}\left( \overline{L}_{x}\right) }$, and 
\begin{equation*}
a_{1}(x)=\frac{1}{\mathrm{Vol}\left( \overline{L}_{x}\right) }\Biggm(\frac{%
S^{W}(\rho (\hat{x}))}{6}+\frac{\Delta _{B}\psi }{2\psi }\left( x\right) +%
\frac{3(d\psi ,d\psi )}{4\psi ^{2}}\left( x\right) 
\end{equation*}%
\begin{equation*}
+\frac{1}{6}C_{X}^{12}\mathrm{Ric}^{W}\left( \rho (\hat{x})\right) +\frac{1}{%
12}C^{34}C^{12}T\left( \rho (\hat{x})\right) +\frac{1}{6}C^{24}C^{13}T\left(
\rho (\hat{x})\right) -\frac{S^{X}(\rho (\hat{x}))}{3}\Biggm).
\end{equation*}%
The curvature terms are evaluated at any $\hat{x}\in \pi ^{-1}(x)$ and are
independent of that choice. In the above equation, $S^{X}$ and $S^{W}$
denote the scalar curvatures of $X=\rho \left( \pi ^{-1}(x)\right) $ and $W$%
, respectively, $\mathrm{Ric}^{W}$ is the Ricci curvature tensor on $W$, $%
C^{ab}$ denotes tensor contraction in the $a^{\text{th}}$ and $b^{\text{th}}$
slots, and the subscript $X$ means that the contraction is taken over the
tangent space to $X$ at $\rho (\hat{x})$. The $(0,4)$--tensor $T$ on $X$ is
defined by 
\begin{eqnarray*}
T(V,W,Y,Z) &=&\left\langle \nabla _{V}^{\perp }W,\nabla _{Y}^{\perp
}Z\right\rangle ^{\perp },\text{ and } \\
T_{kprt} &=&T\left( \partial _{k},\partial _{p},\partial _{r},\partial
_{t}\right) .
\end{eqnarray*}%
The symbol $\perp $ refers to the orthogonal complement of $T_{\rho (\hat{x}%
)}X$.
\end{theorem}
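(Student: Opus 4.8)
The plan is to reduce the diagonal value $K_B(t,x,x)$ to a single Gaussian integral over one $SO(q)$-orbit in the basic manifold $W$, and then to evaluate that integral by means of the classical near-diagonal heat expansion on the closed manifold $W$. I would first combine the three identities of Section~\ref{setup} recalled from \cite{Ri2}: fixing $\hat x\in\pi^{-1}(x)$ and setting $w=\rho(\hat x)$, identities~(2) and~(3) give
\begin{equation*}
K_B(t,x,x)=\int_G\widehat{K_B}(t,\hat x,\hat x g)\,\chi(g)=\int_G\frac{\widetilde{K_W}(t,w,wg)}{\phi(wg)}\,\chi(g),
\end{equation*}
and since $\phi$ is $SO(q)$-invariant and $\pi^{*}\psi=\rho^{*}\phi$, this collapses to $K_B(t,x,x)=\frac{1}{\psi(x)}\int_G\widetilde{K_W}(t,w,wg)\,\chi(g)$. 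Everything is thereby reduced to the small-$t$ asymptotics of one fixed integral over $G=SO(q)$, where $\widetilde{K_W}$ is the heat kernel of the Laplace-type operator $\widetilde{\Delta_W}=\Delta_W-\frac1\phi(d\phi)\lrcorner\circ d$ on $W$, whose behaviour near the diagonal is classical.

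Next I would localize. Let $H=\{g\in G\mid wg=w\}$ be the isotropy group of $w$. For $g$ outside a neighbourhood of $H$ the distance $\mathrm{dist}_W(w,wg)$ is bounded below, so by the off-diagonal decay of $\widetilde{K_W}$ the integral over that region is $O(e^{-c/t})$ and may be discarded. On a tubular neighbourhood of $H$ in $G$ I would write $g=h\exp(\xi)$ with $h\in H$ and $\xi$ in the normal space to $H$ at the identity; since $wh=w$, one has $w(h\exp\xi)=w\exp\xi$, so the $h$-integral factors off and $\xi\mapsto w\exp\xi$ parametrizes a small geodesic ball around $w$ in the orbit $X=wG$, a submanifold of $W$ of dimension $Q$ --- the codimension of $\rho^{-1}(w)\cap\pi^{-1}(x)$ inside $\pi^{-1}(x)$, since the fibres of $g\mapsto wg$ are cosets of $H$ --- and a dimension count gives $\dim W=q_x+Q$. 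After integrating out $h$, the $G$-integral becomes $\frac1{\mathrm{Vol}(\overline{L}_x)}$ times an integral over $X$ against the Riemannian volume $\mathrm{dvol}_X$, using $\mathrm{Vol}(\overline{L}_x)=\psi(x)\,\mathrm{Vol}(X)$ from Section~\ref{setup} and the fact that the pushforward of normalized Haar measure to the homogeneous orbit $X$ is $\mathrm{dvol}_X/\mathrm{Vol}(X)$ by uniqueness of the $G$-invariant measure. Thus, up to $O(e^{-c/t})$,
\begin{equation*}
K_B(t,x,x)=\frac{1}{\mathrm{Vol}(\overline{L}_x)}\int_{X\cap B_\varepsilon(w)}\widetilde{K_W}(t,w,v)\,\mathrm{dvol}_X(v).
\end{equation*}

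Then I would insert the Minakshisundaram--Pleijel expansion
\begin{equation*}
\widetilde{K_W}(t,w,v)=\frac{e^{-r(w,v)^2/4t}}{(4\pi t)^{(\dim W)/2}}\left(c_0(w,v)+c_1(w,v)t+\cdots\right),
\end{equation*}
whose coefficients $c_j$ are the standard local heat invariants of $\widetilde{\Delta_W}$, i.e.\ explicit local expressions in the metric of $W$ and in $\phi$ with its derivatives; $c_1(w,w)$ is precisely what yields the first three terms $\frac{S^W}{6}+\frac{\Delta_B\psi}{2\psi}+\frac{3(d\psi,d\psi)}{4\psi^2}$ of the stated $a_1(x)$. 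Restricting $v$ to $X$ and choosing $W$-geodesic normal coordinates $y$ at $w$ with $\partial_1,\dots,\partial_Q$ spanning $T_wX$, I would Taylor-expand three ingredients: the ambient squared distance $r(w,v)^2$ along $X$, whose deviation from $\sum_{k\le Q}y_k^2$ is controlled by the second fundamental form of $X\subset W$ --- the tensor $T(V,W,Y,Z)=\langle\nabla^{\perp}_VW,\nabla^{\perp}_YZ\rangle^{\perp}$; the Riemannian density of $X$ in these coordinates, which brings in the intrinsic scalar curvature $S^X$ and again $T$; and the ambient metric of $W$ along $X$, which brings in $\mathrm{Ric}^W$ and $S^W$. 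Each monomial $y_{m_1}\cdots y_{m_l}$ so produced, multiplied by $e^{-|y|^2/4t}$ and integrated over $\mathbb{R}^Q$ (via $y=2\sqrt t\,z$), equals a power of $t$ times $2^{l}$ times a stray product of $2$'s and $\pi$'s times $I^{Q\,l}_{m_1\dots m_l}:=\int_{\mathbb{R}^Q}e^{-|z|^2}z_{m_1}\cdots z_{m_l}\,dz$; the odd-$l$ moments vanish, so only integer powers of $t$ survive, and the stray factors reorganize against $(4\pi t)^{-(\dim W)/2}$ and $\dim W=q_x+Q$ into the prefactor $(4\pi t)^{-q_x/2}$ together with the denominator $\pi^{Q/2}$ and the weights $2^{l}$. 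Reading off the coefficient of $t^{i}$ gives
\begin{equation*}
a_i(x)=\sum\frac{2^{l}K^{jl}_{m_1\dots m_l}(x)\,I^{Q\,l}_{m_1\dots m_l}}{\mathrm{Vol}(\overline{L}_x)\,\pi^{Q/2}},
\end{equation*}
the $K^{jl}_{m_1\dots m_l}(x)$ being the assembled local geometric data (the $c_j$, the curvatures of $W$ and $X$, the tensor $T$, and $\psi$ with its derivatives, all at $w$), hence genuine local invariants of $(M,\mathcal F)$ at $x$; the $i=0$ term is $a_0(x)=1/\mathrm{Vol}(\overline{L}_x)$, and the $i=1$ term is the displayed formula for $a_1(x)$.

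The main obstacle is this last step: expanding correctly the restriction of the ambient heat kernel of $W$ to the curved, non-totally-geodesic orbit $X$, and disentangling the intrinsic curvature of $X$ from its extrinsic curvature in $W$, so as to recover the partially contracted term $\frac16 C^{12}_X\mathrm{Ric}^W$, the mixed quadratic terms $\frac1{12}C^{34}C^{12}T+\frac16 C^{24}C^{13}T$, and the coefficient and sign of $-\frac{S^X}{3}$, all while keeping exact track of the $\psi$-factors, the normalization of Haar measure on $SO(q)$, and the bookkeeping of the constants $I^{Q\,l}_{m_1\dots m_l}$. That a clean closed form survives all of these cancellations is the substance of the theorem; the localization and the appeal to the classical near-diagonal expansion are routine.
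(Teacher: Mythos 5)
This theorem is stated in the paper only as a citation from \cite{Ri2}; the present paper contains no proof of it, so the comparison can only be with the method of \cite{Ri2} as it is recalled and reused in Sections~\ref{setup} and~\ref{codim2}. Your sketch follows exactly that route --- reducing $K_B(t,x,x)$ via the three identities of Section~\ref{setup} to $\psi(x)^{-1}\int_G\widetilde{K_W}(t,w,wg)\,\chi(g)$, localizing to the isotropy group, pushing Haar measure forward to $\mathrm{dvol}_X/\mathrm{Vol}(X_w)$ on the orbit $X=\rho(\pi^{-1}(x))$ of dimension $Q$, and expanding the Minakshisundaram--Pleijel kernel of $\widetilde{\Delta_W}$ restricted to $X$ in Gaussian moments --- and your bookkeeping (the count $\dim W=q_x+Q$, the factors $2^l\pi^{-Q/2}$ arising from $y=2\sqrt{t}\,z$, the vanishing of odd moments, and the identification of $c_1(w,w)$ with the first three terms of $a_1$ after converting $\Delta_W\phi/\phi$ into $\Delta_B\psi/\psi$) is consistent with the stated form of the $a_i$ and with the formulas for $a_0(w)$ and $a_1(w)$ derived from this theorem in Section~\ref{codim2}; the only step you leave unverified is the explicit disentangling of intrinsic and extrinsic curvature that produces the precise coefficients of $C_X^{12}\mathrm{Ric}^W$, the two contractions of $T$, and $-S^X/3$, which you acknowledge and which is the actual content of the computation in \cite{Ri2}.
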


The asymptotics of the second term in (\ref{sphereparts}) can be found by
integrating the asymptotics in Theorem~\ref{oldthm}, because the
coefficients $a_{j}\left( x\right) $ and error estimates are bounded on the
set $\left\{ x\in M\,|\,\rho \left( \pi ^{-1}\left( x\right) \right) \subset
W_{2}\right\} $. \ We note that all of these coefficient functions arise
from such an integral on $W$. Explicitly, for $w\in W_{2}$, 
\begin{eqnarray*}
\int_{-\pi }^{\pi }\widetilde{K}(t,w,w\,M\left( \theta \right) )\,\,\frac{%
d\theta }{2\pi } &=&\frac{1}{\sqrt{4\pi t}}\bigm (a_{0}(w)+a_{1}(w)t \\
&+&\ldots +a_{k}(w)t^{k}+O\left( t^{k+1}\right) \,\bigm),
\end{eqnarray*}
where 
\begin{equation*}
a_{0}(w)=\frac{\phi \left( w\right) }{\mathrm{Vol}\left( \pi \left( \rho
^{-1}\left( w\right) \right) \right) }=\frac{1}{\mathrm{Vol}\left(
X_{w}\right) },
\end{equation*}
letting $\phi \left( w\right) $ be the volume of the leaf closure $\rho
^{-1}\left( w\right) $ in $\widehat{M}$ and letting $X=X_{w}$ be the orbit
of $w$ in $W$. Observe that we have also used (\ref{metricw}) to convert the
integral over $M$ to an integral over $W$. Similarly, $a_{1}(w)$ is
obtainable from the expression for $a_{1}$ in the theorem. Since the orbits
are one-dimensional and $W$ is two-dimensional, 
\begin{eqnarray*}
S^{X}\left( w\right) &=&0\text{ } \\
S^{W}\left( w\right) &=&2K\left( w\right) \text{ } \\
C_{X}^{12}\mathrm{Ric}^{W}(w) &=&\mathrm{Ric}^{W}\left( \sigma _{w},\sigma
_{w}\right) =K\left( w\right) \\
C^{24}C^{13}T(w) &=&C^{34}C^{12}T(w)=\left\Vert \mathbf{H}(w)\right\Vert
^{2}=\kappa \left( w\right) ^{2},
\end{eqnarray*}
where $K\left( w\right) $ is the Gauss curvature of the basic manifold at $w$
and $\kappa \left( w\right) $ is the geodesic curvature of the orbit at $w$.
\ Also, if $x\in \pi \left( \rho ^{-1}\left( w\right) \right) $ and $%
\widehat{x}\in \pi ^{-1}\left( x\right) $, 
\begin{eqnarray*}
\frac{\Delta _{B}\psi }{2\psi }\left( x\right) +\frac{3(d\psi ,d\psi )}{%
4\psi ^{2}}\left( x\right) &=&\frac{\pi ^{\ast }\Delta _{B}\psi }{2\pi
^{\ast }\psi }\left( \widehat{x}\right) +\frac{3(\pi ^{\ast }d\psi ,\pi
^{\ast }d\psi )}{4\pi ^{\ast }\psi ^{2}}\left( \widehat{x}\right) \\
&=&\frac{\widehat{\Delta _{B}}\pi ^{\ast }\psi }{2\pi ^{\ast }\psi }\left( 
\widehat{x}\right) +\frac{3(d\pi ^{\ast }\psi ,d\pi ^{\ast }\psi )}{4\pi
^{\ast }\psi ^{2}}\left( \widehat{x}\right) \\
&=&\frac{\widehat{\Delta _{B}}\rho ^{\ast }\phi }{2\rho ^{\ast }\phi }\left( 
\widehat{x}\right) +\frac{3(d\rho ^{\ast }\phi ,d\rho ^{\ast }\phi )}{4\rho
^{\ast }\phi ^{2}}\left( \widehat{x}\right) \\
&=&\frac{\rho ^{\ast }\widetilde{\Delta _{W}}\phi }{2\rho ^{\ast }\phi }%
\left( \widehat{x}\right) +\frac{3(d\rho ^{\ast }\phi ,d\rho ^{\ast }\phi )}{%
4\rho ^{\ast }\phi ^{2}}\left( \widehat{x}\right) \\
&=&\frac{\Delta _{W}\phi }{2\phi }\left( w\right) +\frac{(d\phi ,d\phi )}{%
4\phi ^{2}}\left( w\right) ,
\end{eqnarray*}
using the definition of $\widetilde{\Delta _{W}}$ (see Section~\ref{setup}).
\ Therefore, we now obtain 
\begin{equation*}
a_{1}(w)=\frac{1}{\mathrm{Vol}\left( X_{w}\right) }\left( \frac{1}{2}K\left(
w\right) +\frac{1}{4}\kappa \left( w\right) ^{2}+\frac{\Delta _{W}\phi }{%
2\phi }\left( w\right) +\frac{(d\phi ,d\phi )}{4\phi ^{2}}\left( w\right)
\right) ,
\end{equation*}
and 
\begin{eqnarray*}
\int_{W_{2}}\int_{-\pi }^{\pi }\widetilde{K}(t,w,w\,M\left( \theta \right)
)\,\,\frac{d\theta }{2\pi }\,\,\mathrm{dvol}_{W}(w) &=& \\
\frac{1}{\sqrt{4\pi t}}\int_{W_{2}}\bigm(a_{0}(w)+a_{1}(w)t &+&O\left(
t^{2}\right) \,\bigm)\mathrm{dvol}_{W}(w).
\end{eqnarray*}
In the expression above, note that we could take the limit as $\varepsilon
\rightarrow 0$ (that is, as $W_{2}\rightarrow W$) in every term except the
one involving the geodesic curvature $\kappa \left( w\right) $.

\begin{enumerate}
\item Next, we determine the asymptotics of \ the integral over $W_{1}$ in (%
\ref{sphereparts}). Recall that $W_{1}$ is the disjoint union of two metric $%
\varepsilon $-disks $D_{1}$ and $D_{2}$ surrounding the singular orbits. \
Choosing geodesic polar coordinates $\left( r,\gamma \right) $ around one of
the singular orbits, let $C\left( r\right) $ denote the length of the orbit
at radius $r$. Then $C\left( r\right) =2\pi r\left( 1+O\left( r^{2}\right)
\right) $. The metric on the basic manifold is 
\begin{equation*}
\left( g_{ij}\left( r,\gamma \right) \right) =\left( 
\begin{array}{cc}
1 & 0 \\ 
0 & \frac{C\left( r\right) ^{2}}{4\pi ^{2}}%
\end{array}
\right) ,
\end{equation*}
where $g_{11}\left( r,\gamma \right) =\left\langle \partial _{r},\partial
_{r}\right\rangle $ and so on. The integral over $D_{1}$ is 
\begin{eqnarray*}
\int_{^{D_{1}}}\int_{-\pi }^{\pi } &\ &\widetilde{K}(t,w,w\,M\left( \theta
\right) )\,\,\frac{d\theta }{2\pi }\,\,\mathrm{dvol}_{W}(w) \\
&=&\int_{r=0}^{\varepsilon }\int_{\gamma =-\pi }^{\pi }\int_{-\pi }^{\pi }%
\widetilde{K}(t,\left( r,\gamma \right) ,\left( r,\gamma \right) \,M\left(
\theta \right) )\,\,\frac{d\theta }{2\pi }\,\,d\gamma \frac{C\left( r\right) 
}{2\pi }\,dr \\
&=&\int_{r=0}^{\varepsilon }\int_{0}^{\pi }\widetilde{K}(t,\left( r,0\right)
,\left( r,0\right) \,M\left( \theta \right) )\,\,d\theta \,\frac{C\left(
r\right) }{\pi }\,dr,
\end{eqnarray*}
since the integrand is independent of $\gamma $ because of the isometric
action of $M\left( \theta \right) $. The Minakshisundaram-Pleijel expansion
of $\widetilde{K}(t,\left( r,0\right) ,\left( r,0\right) \,M\left( \theta
\right) )$ is of the form 
\begin{eqnarray*}
&\ &\widetilde{K}(t,\left( r,0\right) ,\left( r,0\right) \,M\left( \theta
\right) ) \\
&=&\frac{1}{4\pi t}e^{-D^{2}\left( r,\theta \right) /4t}\left( u_{0}\left(
r,\theta \right) +u_{1}\left( r,\theta \right) t+u_{2}\left( r,\theta
\right) t^{2}+O\left( t^{3}\right) \right) ,
\end{eqnarray*}
where $D\left( r,\theta \right) $ is geodesic distance between $\left(
r,0\right) $ and $\left( r,0\right) \,M\left( \theta \right) $. Each $%
u_{j}\left( r,\theta \right) $ is a smooth, bounded function on the $%
\varepsilon $-disk. Next, observe that $D\left( r,0\right) =0$, $D\left(
r,\pm \pi \right) =2r$, and $D^{2}\left( r,\theta \right) =r^{2}\left(
\left( 1-\cos \theta \right) ^{2}+\sin ^{2}\theta \right) +O\left(
r^{3}\right) =\allowbreak 2r^{2}\left( 1-\cos \theta \right) +O\left(
r^{3}\right) $ for small $r$. Using properties of the exponential map, one
could easily show that $D\left( r,\theta \right) $ increases in $\theta $ on 
$\left[ 0,\pi \right] $. Hence, after changing coordinates in $\theta $
(affecting the $u_{j}\left( r,\theta \right) $ by $O\left( r\right) $), we
have that 
\begin{eqnarray}
&\ &\int_{D_{1}}\int_{-\pi }^{\pi }\widetilde{K}(t,w,w\,M\left( \theta
\right) )\,\,\frac{d\theta }{2\pi }\,\,\mathrm{dvol}_{W}(w)  \notag \\
&=&\int_{r=0}^{\varepsilon }\int_{0}^{\pi }\widetilde{K}(t,\left( r,0\right)
,\left( r,0\right) \,M\left( \theta \right) )\,\,d\theta \,\frac{C\left(
r\right) }{\pi }\,dr  \notag \\
&=&\frac{1}{4\pi t}\int_{r=0}^{\varepsilon }\int_{0}^{\pi }e^{-\frac{r^{2}}{%
2t}\left( 1-\cos \theta \right) }\left( \overline{u}_{0}\left( r,\theta
\right) +\overline{u}_{1}\left( r,\theta \right) t+O\left( t^{2}\right)
\right) \,d\theta \,\frac{C\left( r\right) }{\pi }\,dr  \notag \\
&=&\frac{1}{4\pi t}\int_{r=0}^{\varepsilon }\int_{0}^{\pi }e^{-\frac{r^{2}}{%
2t}\left( 1-\cos \theta \right) }\left( b_{0}\left( r,\theta \right)
+b_{1}\left( r,\theta \right) t+O\left( t^{2}\right) \right) \,d\theta
\,2r\,dr,  \label{easympt}
\end{eqnarray}
where by symmetry we may assume that $\left. \frac{\partial ^{j}}{\partial
r^{j}}b_{m}\left( r,\theta \right) \right\vert _{r=0}=0$ for $j$ odd.

We now compute the asymptotics of some integrals that will be used our
calculation. Let 
\begin{equation*}
I_{1}\left( t,\varepsilon \right) =\int_{r=0}^{\varepsilon }\int_{0}^{\pi
}e^{-\frac{r^{2}}{2t}\left( 1-\cos \theta \right) }\,d\theta \,2r\,dr.
\end{equation*}
By Tonelli's theorem we change the order of integration, and we then
integrate to get 
\begin{equation*}
I_{1}\left( t,\varepsilon \right) =2\varepsilon ^{2}s\int_{0}^{\pi }\frac{%
\left( 1-\,e^{-\frac{1}{2s}\left( 1-\cos \theta \right) }\right) }{\left(
1-\cos \theta \right) }\,\,d\theta ,
\end{equation*}
letting $s=\frac{t}{\varepsilon ^{2}}$. By changing variables by $\cos
\theta =\frac{x-s}{x+s}=1-\frac{2s}{x+s}$, we obtain 
\begin{eqnarray*}
I_{1}\left( t,\varepsilon \right) &=&2\varepsilon ^{2}s\int_{0}^{\infty }%
\frac{\left( 1-\,e^{-\frac{1}{x+s}}\right) }{\frac{2s}{x+s}}\,\,\frac{\sqrt{s%
}\,dx}{\sqrt{x}\left( x+s\right) } \\
&=&\varepsilon ^{2}\sqrt{s}\int_{0}^{\infty }\frac{\left( 1-\,e^{-\frac{1}{%
x+s}}\right) }{\sqrt{x}}\,\,dx \\
&=&\varepsilon ^{2}\sqrt{s}\,F\left( s\right) .
\end{eqnarray*}
Using a Lebesgue convergence theorem argument, it can be shown that the
integral $F\left( s\right) $ is smooth in $s$ and can be differentiated
under the integral sign. In fact, it can be shown that 
\begin{equation*}
F\left( s\right) =\int_{0}^{\infty }\frac{\left( 1-\,e^{-\frac{1}{x+s}%
}\right) }{\sqrt{x}}\,\,dx=\allowbreak \left( 2\sqrt{\pi }\right) +\left( -%
\frac{1}{2}\sqrt{\pi }\right) s+O\left( s^{2}\right) \allowbreak ,
\end{equation*}
so that 
\begin{eqnarray}
I_{1}\left( t,\varepsilon \right) &=&\varepsilon ^{2}\sqrt{\frac{t}{%
\varepsilon ^{2}}}\,F\left( \frac{t}{\varepsilon ^{2}}\right) =\varepsilon 
\sqrt{t}\,F\left( \frac{t}{\varepsilon ^{2}}\right)  \notag \\
&=&\varepsilon \sqrt{4\pi t}\left( 1-\frac{1}{4}\left( \frac{t}{\varepsilon
^{2}}\right) +O\left( \left( \frac{t}{\varepsilon ^{2}}\right) ^{2}\right)
\right) .  \label{I1}
\end{eqnarray}

Next, for any smooth, bounded function $h$ on the $\varepsilon $-disk,
consider the integral 
\begin{equation*}
I_{2}\left( t,\varepsilon ,h\right) =\int_{r=0}^{\varepsilon }\int_{0}^{\pi
}e^{-\frac{r^{2}}{2t}\left( 1-\cos \theta \right) }r^{2}h\left( r,\theta
\right) \,d\theta \,2r\,dr.
\end{equation*}
Replacing $r$ with $\varepsilon r$, we obtain 
\begin{equation*}
I_{2}\left( t,\varepsilon ,h\right) =\varepsilon
^{4}\int_{r=0}^{1}\int_{0}^{\pi }e^{-\frac{\varepsilon ^{2}r^{2}}{2t}\left(
1-\cos \theta \right) }r^{2}h\left( \varepsilon r,\theta \right) \,d\theta
\,2r\,dr.
\end{equation*}
As before, we let $s=\frac{t}{\varepsilon ^{2}}$, and we substitute $\cos
\theta =\frac{x-s}{x+s}=1-\frac{2s}{x+s}$. 
\begin{eqnarray*}
I_{2}\left( t,\varepsilon ,h\right) &=&\varepsilon
^{4}\int_{r=0}^{1}\int_{0}^{\infty }e^{-\frac{r^{2}}{x+s}}r^{2}h\left(
\varepsilon r,\theta \right) \,\frac{\sqrt{s}\,dx}{\sqrt{x}\left( x+s\right) 
}\,2r\,dr \\
&=&\varepsilon ^{4}\sqrt{s}\int_{0}^{\infty }\int_{r=0}^{1}e^{-\frac{r^{2}}{%
x+s}}h\left( \varepsilon r,\theta \right) \,\,2r^{3}\,dr\,\frac{\,dx}{\sqrt{x%
}\left( x+s\right) } \\
&=&\varepsilon ^{4}\sqrt{s}\,G\left( s,\varepsilon \right) ,
\end{eqnarray*}
where we have used the fact that the integral converges absolutely and
Fubini's theorem. Again, using a Lebesgue convergence theorem argument, it
can be shown that the integral $G\left( s,\varepsilon \right) $ is smooth in 
$s$ and can be differentiated under the integral sign. Moreover, for any
positive integer $m$, 
\begin{equation*}
G\left( s,\varepsilon \right) =c_{0}\left( \varepsilon \right) +c_{1}\left(
\varepsilon \right) s+c_{2}\left( \varepsilon \right) s^{2}+...+c_{m}\left(
\varepsilon \right) s^{m}+O\left( s^{m+1}\right) ,
\end{equation*}
where each $c_{j}\left( \varepsilon \right) $ remains bounded as $%
\varepsilon \rightarrow 0$. Therefore, 
\begin{eqnarray}
I_{2}\left( t,\varepsilon ,h\right) &=&\varepsilon ^{4}\sqrt{\frac{t}{%
\varepsilon ^{2}}}\,G\left( \frac{t}{\varepsilon ^{2}},\varepsilon \right)
=\varepsilon ^{3}\sqrt{t}\,G\left( \frac{t}{\varepsilon ^{2}},\varepsilon
\right)  \notag \\
&=&\varepsilon ^{3}\sqrt{4\pi t}\left( \overline{c}_{0}\left( \varepsilon
\right) +\overline{c}_{1}\left( \varepsilon \right) \frac{t}{\varepsilon ^{2}%
}+O\left( \left( \frac{t}{\varepsilon ^{2}}\right) ^{2}\right) \right) ,
\label{I2}
\end{eqnarray}
where $\overline{c}_{0}\left( \varepsilon \right) $ and $\overline{c}%
_{1}\left( \varepsilon \right) $ are bounded as $\varepsilon \rightarrow 0$.

We will now compute the asymptotics of the integral over $D_{1}$ in equation
(\ref{easympt}). We have that 
\begin{eqnarray*}
&\ &\int_{D_{1}}\int_{-\pi }^{\pi }\widetilde{K}(t,w,w\,M\left( \theta
\right) )\,\,\frac{d\theta }{2\pi }\,\,\mathrm{dvol}_{W}(w) \\
&=&\frac{1}{4\pi t}\int_{r=0}^{\varepsilon }\int_{0}^{\pi }e^{-\frac{r^{2}}{%
2t}\left( 1-\cos \theta \right) }\left( b_{0}\left( r,\theta \right)
+b_{1}\left( r,\theta \right) t+O\left( t^{2}\right) \right) \,d\theta
\,2r\,dr \\
&=&\frac{1}{4\pi t}\int_{r=0}^{\varepsilon }\int_{0}^{\pi }e^{-\frac{r^{2}}{%
2t}\left( 1-\cos \theta \right) }\left( b_{0}+O\left( r^{2}\right)
+b_{1}t+O\left( r^{2}t\right) +O\left( t^{2}\right) \right) \,d\theta
\,2r\,dr \\
&=&\frac{1}{4\pi t}\left( I_{1}\left( t,\varepsilon \right) \left(
b_{0}+b_{1}t\right) +I_{2}\left( t,\varepsilon ,h_{0}\right) +I_{2}\left(
t,\varepsilon ,h_{1}\right) t+O\left( I_{2}\left( t,\varepsilon
,h_{2}\right) t^{2}\right) \right)
\end{eqnarray*}
for some appropriate choices of smooth, bounded functions $h_{0}$, $h_{1}$,
and $h_{2}$. Note that we have denoted $b_{j}=\left. b_{j}\left( r,\theta
\right) \right\vert _{r=0}$. Substituting the expressions (\ref{I1}) and (%
\ref{I2}), \ we get 
\begin{eqnarray*}
&\ &\int_{D_{1}}\int_{-\pi }^{\pi }\widetilde{K}(t,w,w\,M\left( \theta
\right) )\,\,\frac{d\theta }{2\pi }\,\,\mathrm{dvol}_{W}(w) \\
&=&\frac{1}{\sqrt{4\pi t}}\biggm(\left( \varepsilon b_{0}+O\left(
\varepsilon ^{3}\right) \right) +\bigm(-\frac{b_{0}}{4\varepsilon }%
+\varepsilon b_{1} \\
&\ &+O\left( \varepsilon \right) \bigm)t+O\left( t^{2}\right) \biggm).
\end{eqnarray*}
We observe that by the above construction starting with the asymptotic
expansion of $\widetilde{K}(t,\left( r,0\right) ,\left( r,0\right) \,M\left(
\theta \right) )$, we have $b_{0}=1$. This implies that 
\begin{eqnarray*}
&\ &\int_{D_{1}}\int_{-\pi }^{\pi }\widetilde{K}(t,w,w\,M\left( \theta
\right) )\,\,\frac{d\theta }{2\pi }\,\,\mathrm{dvol}_{W}(w) \\
&=&\frac{1}{\sqrt{4\pi t}}\left( O\left( \varepsilon \right) +\left( -\frac{1%
}{4\varepsilon }+O\left( \varepsilon \right) \right) t+O\left( t^{2}\right)
\right) .
\end{eqnarray*}
We have a similar formula for the asymptotics of the integral over the other 
$\varepsilon $-disk $D_{2}$. Thus, the integral over $W_{1}$ satisfies 
\begin{eqnarray*}
&\ &\int_{W_{1}}\int_{-\pi }^{\pi }\widetilde{K}(t,w,w\,M\left( \theta
\right) )\,\,\frac{d\theta }{2\pi }\,\,\mathrm{dvol}_{W}(w) \\
&=&\frac{1}{\sqrt{4\pi t}}\left( O\left( \varepsilon \right) +\left( -\frac{1%
}{2\varepsilon }+O\left( \varepsilon \right) \right) t+O\left( t^{2}\right)
\right)
\end{eqnarray*}
A simple calculation shows that the $-\frac{1}{2\varepsilon }t$ term exactly
counteracts the blowing up of the term $a_{1}\left( w\right) t$ in (\ref%
{easympt}) as $\varepsilon \rightarrow 0$, as expected. We remark that the
analysis in this section could be extended to find the coefficients of
larger powers of $t$ in the obvious way. Putting these results together, we
have the following theorem.

\begin{theorem}
\label{2d1d}Suppose that $(M,\mathcal{F})$ is a Riemannian foliation of
codimension two, and suppose that the leaf closures of \ $\left( M,\mathcal{F%
}\right) $ have minimum codimension one, but some leaf closures have
codimension two. Then the basic manifold $W$ is a sphere, and the $SO\left(
2\right) $-action has exactly two fixed points $w_{1}$ and $w_{2}$. \ Let $%
W_{\varepsilon }=\left\{ w\in W\,|\,\text{dist}\left( w,w_{j}\right)
>\varepsilon \text{ for }j=1,2\right\} $. As $t\rightarrow 0$, the trace $%
K_{B}(t)$ of the basic heat kernel on functions satisfies the following
asymptotic expansion for any positive integer $J$: 
\begin{equation*}
K_{B}(t)=\frac{1}{\sqrt{4\pi t}}\left( A_{0}+A_{1}t+A_{2}t^{2}+\ldots
+A_{J}t^{J}+O\left( t^{{J+1}}\right) \right) ,
\end{equation*}
where 
\begin{equation*}
A_{0}=\int_{M}\frac{1}{\mathrm{Vol}\left( \overline{L_{x}}\right) }\,{%
\mathrm{dvol}}(x)
\end{equation*}
and in general $A_{j}=\lim_{\varepsilon \rightarrow 0}\left(
\int_{W_{\varepsilon }}\overline{a}_{j}(w)\,\mathrm{dvol}(w)-f_{j}\left( 
\frac{1}{\varepsilon }\right) \right) $ for a specific polynomial $f_{j}$
and where $\overline{a}_{j}(w)=\phi \left( w\right) \,a_{j}\left( \pi \left(
\rho ^{-1}\left( w\right) \right) \right) $ with $a_{j}$ as in Theorem~\ref%
{oldthm}. As before, $\overline{L_{x}}$ denotes the leaf closure containing $%
x\in M$. Specifically, 
\begin{equation*}
A_{1}=\lim_{\varepsilon \rightarrow 0}\left( \int_{W_{\varepsilon }}\frac{1}{%
\mathrm{Vol}\left( X_{w}\right) }S(w)\,\mathrm{dvol}(w)-\frac{1}{%
2\varepsilon }\right)
\end{equation*}
for 
\begin{equation*}
S(w)=\frac{1}{2}K\left( w\right) +\frac{1}{4}\kappa \left( w\right) ^{2}+%
\frac{\Delta _{W}\phi }{2\phi }\left( w\right) +\frac{(d\phi ,d\phi )}{4\phi
^{2}}\left( w\right) ,
\end{equation*}
where $\phi \left( w\right) $ is the volume of the leaf closure $\rho
^{-1}\left( w\right) $ in $\widehat{M}$ , $X_{w}$ is the orbit of $w$ in $W$%
, $K\left( w\right) $ is the Gauss curvature of the basic manifold at $w$,
and $\kappa \left( w\right) $ is the geodesic curvature of the orbit at $w$.
\end{theorem}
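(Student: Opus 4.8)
The work is mostly a matter of assembling the computations already set up in this subsection, so I would organize the argument into the splitting, the two contributions, and the assembly. The topological assertion follows as in the discussion preceding Theorem~\ref{oldthm}: the infinitesimal generator of the $SO(2)$-action is a vector field on $W$ whose zeros are exactly the points over the codimension-two leaf closures, with index $+1$ at each, so $\chi(W)$ equals the number of fixed points and is positive; under the standing assumption $W = S^2$ there are then exactly two fixed points $w_1, w_2$. Since the minimal codimension of a leaf closure is one, $\overline{q} = 1$, which accounts for the prefactor $(4\pi t)^{-1/2}$.

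Fix a small $\varepsilon > 0$, let $W_1$ be the union of the two geodesic $\varepsilon$-disks $D_1, D_2$ about $w_1, w_2$, and set $W_2 = W_\varepsilon = W \setminus W_1$; Proposition~\ref{trbasic1} in the form~(\ref{sphereparts}) then writes $K_B(t)$ as an integral over $W_1$ plus one over $W_2$. On $W_2$ every $SO(2)$-orbit is a circle of length bounded below, so the transverse geometric data in Theorem~\ref{oldthm} and its remainder are uniformly bounded on $\{x \in M : \rho(\pi^{-1}(x)) \subset W_2\}$; pulling that theorem up to $W$ by~(\ref{metricw}) gives $\int_{-\pi}^{\pi}\widetilde{K}(t,w,wM(\theta))\,\frac{d\theta}{2\pi} = (4\pi t)^{-1/2}\bigl(\overline{a}_0(w) + \cdots + \overline{a}_J(w)t^J + O(t^{J+1})\bigr)$ with $\overline{a}_j(w) = \phi(w)\,a_j(\pi(\rho^{-1}(w)))$, which I may integrate termwise over $W_2$. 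The first two coefficients are computed as in the text: $\overline{a}_0(w) = \mathrm{Vol}(X_w)^{-1}$, and, using $S^X = 0$, $S^W = 2K$, $C^{12}_X\mathrm{Ric}^W = K$, $C^{24}C^{13}T = C^{34}C^{12}T = \kappa^2$ for the one-dimensional orbit $X_w$ in the surface $W$, together with the identity turning $\frac{\Delta_B\psi}{2\psi} + \frac{3(d\psi,d\psi)}{4\psi^2}$ into $\frac{\Delta_W\phi}{2\phi} + \frac{(d\phi,d\phi)}{4\phi^2}$ via the definition~(\ref{twistlap}) of $\widetilde{\Delta_W}$, one gets $\overline{a}_1(w) = \mathrm{Vol}(X_w)^{-1}S(w)$ with $S(w)$ as in the statement.

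For the integral over $W_1$ I would use geodesic polar coordinates $(r,\gamma)$ about each fixed point, in which the metric is $\mathrm{diag}(1, C(r)^2/4\pi^2)$ with $C(r) = 2\pi r(1 + O(r^2))$. The $SO(2)$-invariance removes the $\gamma$-integration; inserting the Minakshisundaram--Pleijel expansion of $\widetilde{K}(t,(r,0),(r,0)M(\theta))$, using $D^2(r,\theta) = 2r^2(1-\cos\theta) + O(r^3)$, changing the variable $\theta$ so as to straighten the Gaussian factor (which perturbs the coefficients by $O(r)$), and using that $\partial_r^{j}b_m(r,\theta)$ vanishes at $r = 0$ for odd $j$, the integral reduces to the model integrals $I_1(t,\varepsilon)$ and $I_2(t,\varepsilon,h)$, whose expansions in $t$ and $\varepsilon$ are recorded in~(\ref{I1}) and~(\ref{I2}). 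This gives $\int_{W_1}(\cdots) = (4\pi t)^{-1/2}\bigl(O(\varepsilon) + (-\frac{1}{2\varepsilon} + O(\varepsilon))t + O(t^2)\bigr)$, and, carried to order $t^J$, $(4\pi t)^{-1/2}$ times a series in \emph{integer} powers of $t$ whose coefficient of $t^j$ equals $-f_j(1/\varepsilon)$ up to a term vanishing as $\varepsilon \to 0$, for a polynomial $f_j$ determined by the model integrals and the heat-kernel Taylor coefficients at the fixed points; in particular $f_0 = 0$ and $f_1(u) = u/2$. (This incidentally shows no half-integer powers or logarithms occur here, sharpening Theorem~\ref{tracebasic} in this case.)

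Adding the two contributions exhibits $K_B(t)$, for each fixed $\varepsilon$, as $(4\pi t)^{-1/2}$ times an integer power series to order $J$ with $t^j$-coefficient $C_j(\varepsilon) = \int_{W_\varepsilon}\overline{a}_j\,\mathrm{dvol} - f_j(1/\varepsilon) + o(1)$ as $\varepsilon \to 0$. Since $K_B(t)$ is a single function, uniqueness of asymptotic expansions forces $C_j(\varepsilon)$ to be independent of $\varepsilon$; calling the common value $A_j$ gives $A_j = \lim_{\varepsilon\to0}\bigl(\int_{W_\varepsilon}\overline{a}_j\,\mathrm{dvol} - f_j(1/\varepsilon)\bigr)$. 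For $j = 0$ this together with the computation of $\mathrm{Vol}(W/G)$ in Section~\ref{setup} yields $A_0 = \lim_{\varepsilon\to0}\int_{W_\varepsilon}\mathrm{Vol}(X_w)^{-1}\,\mathrm{dvol} = \mathrm{Vol}(W/G) = \int_M \mathrm{Vol}(\overline{L_x})^{-1}\,\mathrm{dvol}$, and for $j = 1$ it gives the stated formula for $A_1$. The point that needs the most care is exactly this interchange of limits: the integrals $\int_{W_\varepsilon}\overline{a}_j\,\mathrm{dvol}$ diverge as $\varepsilon \to 0$ (the $\overline{a}_j$ blow up near $w_1, w_2$) and the naive termwise integration of the pointwise expansion of Theorem~\ref{oldthm} over all of $M$ fails; one must fix $\varepsilon$, produce the finite-order expansion with its $\varepsilon$-dependent remainder, and only then send $\varepsilon \to 0$ inside the $\varepsilon$-independent coefficients, the $W_1$ correction $-f_j(1/\varepsilon)$ being precisely what renders the limit finite.
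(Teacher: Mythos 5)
Your proposal is correct and follows essentially the same route as the paper: the index argument for the sphere, the split of $W$ into the two $\varepsilon$-disks and their complement, termwise integration of the pointwise expansion of Theorem~\ref{oldthm} over $W_{2}$, reduction of the disk integrals to the model integrals $I_{1}$ and $I_{2}$ in polar coordinates, and the observation that the $-\frac{1}{2\varepsilon}t$ term from the disks cancels the divergence of $\int_{W_{\varepsilon}}\overline{a}_{1}$. Your explicit appeal to uniqueness of asymptotic expansions to justify the $\varepsilon$-independence of the assembled coefficients is a slightly more careful phrasing of the final step the paper leaves implicit, but it is not a different argument.
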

\end{enumerate}

\section{Examples}

\label{example}

In this section we compute two specific examples that demonstrate the
behavior described in the last section. The first example is a transversally
oriented, codimension two Riemannian foliation in which some of the leaf
closures have codimension one and others have codimension two.

\begin{example}
\label{sphereexample} Consider the one-dimensional foliation obtained by
suspending an irrational rotation on the standard unit sphere $S^{2}$. On $%
S^{2}$ we use the cylindrical coordinates $\left( z,\theta \right) $,
related to the standard rectangular coordinates by $x^{\prime }=\sqrt{\left(
1-z^{2}\right) }\cos \theta $, $y^{\prime }=\sqrt{\left( 1-z^{2}\right) }
\sin \theta $, $z^{\prime }=z$. Let $\alpha $ be an irrational multiple of $%
2\pi $, and let the three-manifold $M=S^{2}\times \left[ 0,1\right] /\sim $,
where $\left( z,\theta ,0\right) \sim \left( z,\theta +\alpha ,1\right) $. \
Endow $M$ with the product metric on $T_{z,\theta ,t}M\cong T_{z,\theta
}S^{2}\times T_{t}\mathbb{R}$. Let the foliation $\mathcal{F}$ be defined by
the immersed submanifolds $L_{z,\theta }=\cup _{n\in \mathbb{Z}}\left\{
z\right\} \times \left\{ \theta +\alpha \right\} \times \left[ 0,1\right] $
(not unique in $\theta $). The leaf closures $\overline{L}_{z}$ for $|z|<1$
are two-dimensional, and the closures corresponding to the poles ($z=\pm 1$)
are one-dimensional. Therefore, this foliation satisfies the hypothesis of
Theorem~\ref{2d1d}.

In \cite{Ri2}, we used this example to demonstrate the asymptotic behavior
of \linebreak
$K_{B}\left( t,z,z\right) $ for different values of $z$. We state the
results of some of the computations in \cite{Ri2}. The basic functions are
functions of $\ z$ alone, and the basic Laplacian on functions is $\Delta
_{B}=-\left( 1-z^{2}\right) \,\partial _{z}^{2}+2z\,\partial _{z}$. \ The
volume form on $M$ is $dz\,d\theta \,dt$, and the volume of the leaf closure
at $\ z$ is $\frac{1}{2\pi \sqrt{1-z^{2}}}$ for $|z|<1$. The eigenfunctions
are the Legendre polynomials $P_{n}\left( z\right) $ corresponding to
eigenvalues $m\left( m+1\right) $ for $m\geq 0$. From this information
alone, one may calculate that the trace $K_{B}\left( t\right) $ of the basic
heat operator is 
\begin{equation}
K_{B}\left( t\right) =\sum_{m\geq 0}e^{-m\left( m+1\right) t}=\frac{1}{\sqrt{%
4\pi t}}\left( \pi +\frac{\pi }{4}t+O\left( t^{2}\right) \right) .
\label{tracesuspense}
\end{equation}

The basic manifold $W$ corresponding to this foliation is a sphere with
points described by orthogonal coordinates $\left( z,\varphi \right) \in
\lbrack -1,1]\times \left( -\pi ,\pi \right] $. As shown in \cite{Ri2}, the
metric on $W$ is given by $\left\langle \partial _{z},\partial
_{z}\right\rangle =\frac{1}{1-z^{2}},\,\,\left\langle \partial _{\varphi
},\partial _{\varphi }\right\rangle =\frac{4\pi ^{2}\left( 1-z^{2}\right) }{%
4\pi ^{2}\left( 1-z^{2}\right) +z^{2}}$. The following geometric quantities
can be calculated from this data (see \cite{Ri2}): 
\begin{eqnarray*}
\mathrm{dvol}_{z,\varphi } &=&\frac{2\pi }{\sqrt{4\pi ^{2}\left(
1-z^{2}\right) +z^{2}}}\,dz\,d\varphi \\
\mathrm{Vol}\left( X_{z}\right) &=&\frac{4\pi ^{2}\sqrt{\left(
1-z^{2}\right) }}{\sqrt{4\pi ^{2}\left( 1-z^{2}\right) +z^{2}}} \\
K\left( z,\varphi \right) &=&2\frac{\left( 4\pi ^{2}-1\right) z^{2}+2\pi ^{2}%
}{\left( 4\pi ^{2}\left( 1-z^{2}\right) +z^{2}\right) ^{2}} \\
\frac{\Delta _{W}\phi }{2\phi }\left( z,\varphi \right) +\frac{(d\phi ,d\phi
)}{4\phi ^{2}}\left( z,\varphi \right) &=&\frac{\left( 4\pi ^{2}-1\right)
\left( \left( -3\pi ^{2}-\frac{3}{4}\right) z^{2}+\left( \pi ^{2}-\frac{1}{4}%
\right) z^{4}+2\pi ^{2}\right) }{\left( 4\pi ^{2}\left( 1-z^{2}\right)
+z^{2}\right) ^{2}} \\
\kappa \left( z,\varphi \right) ^{2} &=&\frac{z^{2}}{\left( 1-z^{2}\right)
\left( 4\pi ^{2}\left( 1-z^{2}\right) +z^{2}\right) ^{2}} \\
\varepsilon &=&\text{dist}\left( z=1,z=1-\varepsilon ^{\prime }\right) =%
\frac{\pi }{2}-\arcsin \left( 1-\varepsilon ^{\prime }\right)
\end{eqnarray*}
Using these computations, we may now compute $A_{0}$ and $A_{1}$ in Theorem~%
\ref{2d1d}. 
\begin{equation*}
A_{0}=\int_{M}\frac{1}{\mathrm{Vol}\left( \overline{L_{z}}\right) }\,{%
\mathrm{dvol}}(z,\theta ,t)=\int_{t=0}^{1}\int_{\theta =-\pi }^{\pi
}\int_{z=-1}^{1}\frac{1}{2\pi \sqrt{1-z^{2}}}\,dz\,d\theta \,dt=\pi .
\end{equation*}
Next, we substitute the functions into the formula for $A_{1}$ and simplify: 
\begin{eqnarray*}
A_{1} &=&\lim_{\varepsilon \rightarrow 0}\left( \int_{W_{\varepsilon }}\frac{%
1}{\mathrm{Vol}\left( X_{w}\right) }S(w)\,\mathrm{dvol}(w)-\frac{1}{%
2\varepsilon }\right) \\
&=&\lim_{\varepsilon ^{\prime }\rightarrow 0}\left( \int_{\varphi =-\pi
}^{\pi }\int_{z=-1+\varepsilon ^{\prime }}^{1-\varepsilon ^{\prime }}\frac{1%
}{8\pi }\frac{2-z^{2}}{\left( 1-z^{2}\right) ^{3/2}}\,\,dz\,d\varphi -\frac{1%
}{2\left( \frac{\pi }{2}-\arcsin \left( 1-\varepsilon ^{\prime }\right)
\right) }\right) \\
&=&\lim_{\varepsilon ^{\prime }\rightarrow 0}\left( \int_{z=-1+\varepsilon
^{\prime }}^{1-\varepsilon ^{\prime }}\,\,\frac{1}{4}\frac{2-z^{2}}{\left(
1-z^{2}\right) ^{3/2}}\,dz\,-\frac{1}{2\left( \frac{\pi }{2}-\arcsin \left(
1-\varepsilon ^{\prime }\right) \right) }\right) \\
&=&\lim_{\varepsilon ^{\prime }\rightarrow 0}\left( \,\,\allowbreak \left. 
\frac{1}{4}\left( \frac{z}{\sqrt{\left( 1-z^{2}\right) }}+\arcsin z\right)
\right\vert _{-1+\varepsilon ^{\prime }}^{1-\varepsilon ^{\prime }}\,-\frac{1%
}{2\left( \frac{\pi }{2}-\arcsin \left( 1-\varepsilon ^{\prime }\right)
\right) }\right) \\
&=&\lim_{\varepsilon ^{\prime }\rightarrow 0}\Bigm(\,\,\allowbreak \frac{1}{2%
}\left( \frac{1-\varepsilon ^{\prime }}{\sqrt{\left( 1-\left( 1-\varepsilon
^{\prime }\right) ^{2}\right) }}+\arcsin \left( 1-\varepsilon ^{\prime
}\right) \right) \\
&\ &-\frac{1}{2\left( \frac{\pi }{2}-\arcsin \left( 1-\varepsilon ^{\prime
}\right) \right) }\Bigm) \\
&=&\lim_{\varepsilon ^{\prime }\rightarrow 0}\left( \frac{\pi }{4}+O\left( 
\sqrt{\varepsilon ^{\prime }}\right) \right) =\frac{\pi }{4}.
\end{eqnarray*}
Therefore, Theorem~\ref{2d1d} implies that 
\begin{equation*}
K_{B}\left( t\right) =\frac{1}{\sqrt{4\pi t}}\left( \pi +\frac{\pi }{4}%
t+O\left( t^{2}\right) \right) ,
\end{equation*}
which agrees with the direct calculation (\ref{tracesuspense}).
\end{example}

The next example is an example of a codimension two, transversally oriented
Riemannian foliation such that not all of the leaf closures are
transversally orientable.

\begin{example}
\label{exnonorientable} This foliation is the suspension of an irrational
rotation of the flat torus and a $\mathbb{Z}_{2}$-action. Let $X$ be any
closed Riemannian manifold such that $\pi _{1}(X)=\mathbb{Z}\ast \mathbb{Z}$
--- the free group on two generators $\{\alpha ,\beta \}$. We normalize the
volume of $X$ to be 1. Let $\widetilde{X}$ be the universal cover. We define 
$M=\widetilde{X}\times S^{1}\times S^{1}\slash \pi _{1}(X)$, where $\pi
_{1}(X)$ acts by deck transformations on $\widetilde{X}$ and by $\alpha
\left( \theta ,\phi \right) =\left( 2\pi -\theta ,2\pi -\phi \right) $ and $%
\beta \left( \theta ,\phi \right) =\left( \theta ,\phi +\sqrt{2}\pi \right) $
on $S^{1}\times S^{1}$. We use the standard product-type metric. The leaves
of $\mathcal{F}$ are defined to be sets of the form $\left\{ (x,\theta ,\phi
)_{\sim }\,|\,x\in \widetilde{X}\right\} $. Note that the foliation is
transversally oriented, but the codimension one leaf closure foliation is
not transversally orientable. The leaf closures are sets of the form 
\begin{equation*}
\overline{L}_{\theta }=\left\{ (x,\theta ,\phi )_{\sim }\,|\,x\in \widetilde{%
X},\phi \in \lbrack 0,2\pi ]\right\} \bigcup \left\{ (x,2\pi -\theta ,\phi
)_{\sim }\,|\,x\in \widetilde{X},\phi \in \lbrack 0,2\pi ]\right\}
\end{equation*}
The basic functions and one-forms are: 
\begin{eqnarray*}
\Omega _{B}^{0} &=&\left\{ f\left( \theta \right) \right\} \\
\Omega _{B}^{1} &=&\left\{ g_{1}\left( \theta \right) d\theta +g_{2}(\theta
)d\phi \right\} ,
\end{eqnarray*}
where the functions are smooth and satisfy 
\begin{eqnarray*}
f\left( 2\pi -\theta \right) &=&f\left( \theta \right) \\
g_{i}\left( 2\pi -\theta \right) &=&-g_{i}\left( \theta \right)
\end{eqnarray*}
From this information, we calculate that $\Delta _{B}f\left( \theta \right)
=-f^{\prime \prime }\left( \theta \right) $. The eigenvalues $\left\{
n\,|\,n\geq 0\right\} $ correspond to the eigenfunctions $\left\{ \cos
n\theta \,|\,n\geq 0\right\} $. Then 
\begin{eqnarray*}
K_{B}\left( t,\theta ,\theta \right) &=&\frac{1}{2\pi }+\frac{1}{\pi }%
\sum_{n\geq 1}e^{-n^{2}t}\cos ^{2}\left( n\theta \right) \\
&=&\frac{1}{2\pi }\sum_{n\in \mathbb{Z}}e^{-n^{2}t}\cos ^{2}\left( n\theta
\right) \\
&=&\frac{1}{4\pi }\sum_{n\in \mathbb{Z}}e^{-n^{2}t}+\frac{1}{4\pi }%
\sum_{n\in \mathbb{Z}}e^{-n^{2}t}\cos \left( 2n\theta \right) \\
&=&\frac{1}{4\pi }\mathrm{tr}\left( e^{-t\Delta }\text{ on }L^{2}\left(
S^{1}\right) \right) +\frac{1}{2}K\left( t,0,2\theta \right) ,
\end{eqnarray*}
where $K\left( t,\theta _{1},\theta _{2}\right) $ is the heat kernel on
functions on $S^{1}$. Substituting the expressions for this known kernel and
its trace, we obtain 
\begin{multline*}
K_{B}\left( t,\theta ,\theta \right) =\frac{1}{4\pi }\sqrt{\frac{\pi }{t}}%
\sum_{k\in \mathbb{Z}}e^{-k^{2}\pi ^{2}\slash t}+\frac{1}{2}\frac{1}{\sqrt{%
4\pi t}}\sum_{k\in \mathbb{Z}}e^{-\left( 2\theta +2k\pi \right) ^{2}\slash
4t} \\
=\left\{ 
\begin{array}{ll}
\frac{1}{2\sqrt{\pi t}}+\mathcal{O}\left( e^{-\pi ^{2}/t}\right) & \text{if }%
\theta =k\pi \text{ for some }k\in \mathbb{Z} \\ 
\frac{1}{4\sqrt{\pi t}}+\mathcal{O}\left( e^{-c\left( \theta \right)
/t}\right)  & \text{otherwise, for }c\left( \theta \right) =\min \left\{
\left. \left( \theta +k\pi \right) ^{2}\right\vert ~k\in \mathbb{Z}\right\}%
\end{array}
\right.
\end{multline*}
\newline
The basic manifold $\widehat{W}$ is an $SO(2)$-manifold, defined by $%
\widehat{W}=[0,\pi ]\times S^{1}\slash \sim $, where the circle has length $%
1$ and $\left( \theta =0\text{ or }\pi ,\gamma \right) \sim \left( \theta =0%
\text{ or }\pi ,-\gamma \right) $. This is a Klein bottle, since it is the
connected sum of two projective planes. The group $SO(2)$ acts on $\widehat{W%
}$ via the usual action on $S^{1}$. It is a simple exercise to calculate the
trace of the basic heat operator from the eigenvalues alone: 
\begin{equation*}
K_{B}\left( t\right) =\mathrm{tr}\left( e^{-t\Delta _{B}^{0}}\right)
=\sum_{n\geq 0}e^{-n^{2}t}\sim \frac{\sqrt{\pi }}{2}t^{-1/2}+\frac{1}{2}.
\end{equation*}
Note that the existence of the constant term above implies that the
asymptotics of $K_{B}\left( t\right) $ cannot be obtained by integrating the
asymptotics of $K_{B}\left( t,\theta ,\theta \right) $ above.

We now compute the asymptotics of the trace of the basic heat operator using
Theorem~\ref{codim1nonorientable}. The volume of a generic leaf closure is $%
4\pi$, so the transverse volume is $A_{0}=V_{tr}=\frac{\mathrm{Vol}\left( M
\right)}{4\pi}=\pi$. The mean curvature of the leaf closures is identically
zero, so that $A_{1}=B_{1}=0$. Thus, Theorem~\ref{codim1nonorientable}
implies that 
\begin{equation*}
K_{B}\left( t \right)=\frac{1}{\sqrt{4\pi t}}\left( \pi +\sqrt{\pi}%
\,t^{1/2}+O\left( t^{2} \right) \right) \sim \frac{\sqrt{\pi }}{2}t^{-1/2}+%
\frac{1}{2} ,
\end{equation*}
as expected.
\end{example}

\section{Riemannian Foliations That Are Not Transversally Orientable}

\label{nonorient}

In most of the cases considered throughout this paper, we have assumed that
the foliation in question is transversally oriented. We now remark that with
a few simple modifications, the results of this paper can be used to find
the asymptotics of the trace of the basic heat operator on a Riemannian
foliation that is not transversally orientable. First of all, the basic
Laplacian on functions is still well-defined on such foliations; the basic
Laplacian can defined using a local orientation and does not depend on the
choice of that orientation. Suppose that $\left( M,\mathcal{F}\right) $ is a
Riemannian foliation on a connected, compact manifold with a bundle--like
metric such that the leaves are not transversally orientable. The foliation
may now be lifted to the (nonoriented) orthonormal transverse frame bundle,
an $O\left( q\right) $ bundle over $M$. The lifted foliation is
transversally parallelizable, and the closures of the leaves of the lifted
foliation fiber over a compact $O\left( q\right) $-manifold $\overline{W}$.
\ The group $O\left( q\right) $ does not act by orientation-preserving
isometries, but otherwise the results of this paper extend by letting the
group $G=O\left( q\right) $. Thus, Theorem~\ref{tracebasic} holds, but some
of the results in Section~\ref{special} would have to be modified to allow
for orientation-reversing holonomy. For example, the powers of $t$ in the
asymptotic expansions of the trace of the basic heat operator would in
general increment by half integers instead of integers. Note that this
phenomenon can occur even in the transversally oriented case, if the leaf
closures are not necessarily transversally oriented (see Example~\ref%
{exnonorientable}).

\begin{acknowledgement}
I thank George Gilbert, Efton Park, Franz Kamber, and Jochen Br\"{u}ning for
helpful conversations.
\end{acknowledgement}


\begin{thebibliography}{BerGV}
\bibitem{A1} J. A. \'Alvarez L\'opez, \emph{The basic component of the
mean curvature of Riemannian foliations,} Ann. Global Anal. Geom. \textbf{10}%
(1992), 179--194.

\bibitem{ABP} M. F. Atiyah, R. Bott, and V. K. Patodi, \emph{On the
heat equation and index theorem,} Invent. math. \textbf{19}(1973), 279--330.

\bibitem{Be} M. Berger, \emph{Sur les spectre d'une vari\'et\'e
riemannienne,} C. R. Acad. Sci. Paris S\'er. I Math. \textbf{163}(1963),
13--16.

\bibitem{BeGM} M. Berger, P. Gauduchon, and E. Mazet, \emph{\ Le
Spectre d'Une Vari\'et\'e Riemannienne,} Berlin: Springer Verlag, 1971.

\bibitem{BerGV} N. Berline, E. Getzler, and M. Vergne, \emph{Heat
Kernels and Dirac Operators,} Berlin: Springer Verlag, 1991.

\bibitem{Bl} D. Bleecker, \emph{The supertrace of the steady asymptotic
of the spinorial heat kernel,} J. Math. Phys. \textbf{33}(1992), no. 6,
2053--2070.

\bibitem{Bre} G. Bredon, \emph{\ Introduction to Compact Transformation
Groups,} New York:Academic Press, 1972.

\bibitem{BrotD} T. Br\"ocker and T. tom Dieck, \emph{\
Representations of Compact Lie Groups}, New York:Springer-Verlag, 1985.

\bibitem{BrH1} J. Br{\"u}ning and E. Heintze, \emph{Representations of
compact Lie groups and elliptic operators,} Inventiones math. \textbf{50}%
(1979), 169--203.

\bibitem{BrH2} J. Br{\"u}ning and E. Heintze, \emph{The asymptotic
expansion of Minakshisundaram--Pleijel in the equivariant case,} Duke Math.
J. \textbf{51}(1984), 959--979.

\bibitem{BrK} J. Br{\"u}ning and F. W. Kamber, \emph{Vanishing theorems
and index formulas for transversal Dirac operators,} A.M.S. Meeting 845,
Special Session on Operator Theory and Applications to Geometry, Lawrence,
Kansas, A.M.S. Abstracts, October 1988.

\bibitem{Cam} R. Camporesi, \emph{The spinor heat kernel in maximally
symmetric spaces,} Comm. Math. Phys. \textbf{148}(1992), no. 2, 283--308.

\bibitem{CarZ} R. A. Carmona and W. A. Zheng, \emph{Reflecting
Brownian motions and comparison theorems for Neumann heat kernels,} J.
Funct. Anal. \textbf{123}(1994), no. 1, 109--128.

\bibitem{Cha} I. Chavel, \emph{\ Eigenvalues in Riemannian geometry,}
Academic Press, New York, 1984.

\bibitem{Che} J. Cheeger, \emph{Analytic torsion and the heat equation,}
Ann. of Math. \textbf{109}(1979), 259--322.

\bibitem{CrP} M. Craioveanu and M. Puta, \emph{Asymptotic properties of
eigenvalues of the basic Laplacian associated to certain Riemannian
foliations,} Bull. Math. Soc. Sci. Math. Roumanie (N.S.) \textbf{35(83)}%
(1991), no. 1--2 , 61--65.

\bibitem{D1} H. Donnelly, \emph{Spectrum and the fixed point sets of
isometries I,} Math. Ann. \textbf{224}(1976), 161--170.

\bibitem{D2} H. Donnelly, \emph{Asymptotic expansions for the compact
quotients of properly discontinuous group actions,} Illinois J. Math. 
\textbf{23}(1979), 485--496.

\bibitem{Dry} E. B. Dryden, C. S. Gordon, S. J. Greenwald, and D. L. Webb,
\emph{Asymptotic expansion of the heat kernel for orbifolds}, to appear in 
Mich. Math. J., arXiv:0805.3148v1 [math.DG].

\bibitem{E} A. El Kacimi--Alaoui, \emph{\'Equation de la chaleur sur les
espaces singuli\`ers,} C. R. Acad. Sci. Paris \textbf{303}(1986) no. 6,
243--246.

\bibitem{EH} A. El Kacimi--Alaoui and G. Hector, \emph{D\'ecomposition
de Hodge basique pour un feuilletage Riemannien,} Ann. Inst. Fourier,
Grenoble \textbf{36}(1986), no. 3 , 207--227.

\bibitem{EFKK} G. Esposito, G. Fucci, A. Y. Kamenshchik, and K.
Kirsten, \emph{Spectral asymptotics of Euclidean quantum gravity with
diff-invariant boundary conditions}, Classical Quantum Gravity \textbf{22}%
(2005), no. 6, 957--974.

\bibitem{Feg} H. D. Fegan, \emph{The heat equation and modular forms,}
J. Diff. Geo. \textbf{13}(1978), 589--602.

\bibitem{Fel} W. Feller, \emph{\ An Introduction to Probability Theory
and Its Applications,} Vol. II, Wiley, New York, 1966.

\bibitem{Gi} P. B. Gilkey, \emph{\ Invariance theory, the heat equation,
and the Atiyah-Singer Index Theorem,} Publish or Perish, Wilmington, DE,1984.

\bibitem{GK} J. F. Glazebrook and F. W. Kamber, \emph{Transversal Dirac
families in Riemannian foliations,} Comm. Math. Phys. \textbf{140}(1991),
217--240.

\bibitem{Gr} P. Greiner, \emph{An asymptotic expansion for the heat
equation,} Arch. Rational Mech. Anal. \textbf{41}(1971), 163--218.

\bibitem{H} F. B. Hildebrand, \emph{\ Advanced Calculus for Applications,}
second ed., Prentice Hall, Inc., Englewood Cliffs, N.J., 1976.

\bibitem{KT} F. W. Kamber and Ph. Tondeur, \emph{De Rham-Hodge theory
for Riemannian foliations,} Math. Ann. \textbf{277}(1987), 415--431.

\bibitem{Kaw} K. Kawakubo, \emph{\ The Theory of Transformation Groups,}
Oxford University Press, 1991.

\bibitem{LRi} J. Lee and K. Richardson, \emph{Riemannian foliations and
eigenvalue comparison},  Ann. Global Anal. Geom. \textbf{16}(1998), no. 6,
497--525.

\bibitem{MaSt} P. Malliavin and D. Stroock, \emph{Short time behavior
of the heat kernel and its logarithmic derivatives,} J. Diff. Geom. \textbf{%
44}(1996), 550--570.

\bibitem{McS} H. P. McKean and I. M. Singer, \emph{Curvature and the
eigenvalues of the Laplacian,} J. Diff. Geom. \textbf{1}(1967), 43--69.

\bibitem{Mil} J. Milnor, \emph{\ Morse Theory,} Ann. Math. Stud. no.
51, Princeton University Press, 1963

\bibitem{MiP} S. Minakshisundaram and A. Pleijel, \emph{Some properties
of the eigenfunctions of the Laplace-operator on Riemannian manifolds,}
Canadian J. Math. \textbf{1}(1949), 242--256.

\bibitem{Mo} P. Molino, \emph{\ Riemannian foliations,} Progress in
Mathematics, Birkhauser, Boston, 1988.

\bibitem{NRT} S. Nishikawa, M. Ramachandran, and Ph. Tondeur, \emph{The
heat equation for Riemannian foliations,} Trans. Amer. Math. Soc. \textbf{319%
}(1990), 619--630.

\bibitem{NTV} S. Nishikawa, Ph. Tondeur, and L. Vanhecke, \emph{%
Spectral geometry for Riemannian foliations,} Ann. Global Anal. Geom. 
\textbf{10}(1992), 291--304.

\bibitem{OPS1} B. Osgood, R. Phillips, and P. Sarnak, \emph{Extremals
of Determinants of Laplacians,} J. Funct. Anal. \textbf{80}(1988), no. 1,
148--211.

\bibitem{OPS2} B. Osgood, R. Phillips, and P. Sarnak, \emph{Compact
Isospectral Sets of Surfaces,} J. Funct. Anal. \textbf{80}(1988), no. 1,
212--234.

\bibitem{Pa} E. Park, \emph{Toeplitz algebras associated to isometric
flows,} Illinois J. Math. \textbf{41}(1997), no. 1, 93--102.

\bibitem{PaRi} E. Park and K. Richardson, \emph{The basic Laplacian of
a Riemannian foliation,} Amer. J. Math. \textbf{118}(1996), 1249--1275.

\bibitem{Pe} A. S. Petrow, \emph{\ Einstein--R\"aume,} Akademie--Verlag,
Berlin, 1964.

\bibitem{RaS} D.B. Ray and I.M. Singer, \emph{$R$-Torsion and the
Laplacian on Riemannian manifolds,} Adv. Math. \textbf{7}(1971), 145--210.

\bibitem{Re} B. Reinhart, \emph{\ Differential Geometry of Foliations,}
Springer-Verlag, Berlin, 1983.

\bibitem{Ri1} K. Richardson, \emph{Critical points of the determinant
of the Laplace operator,} J. Funct. Anal. \textbf{122}(1994), no. 1, 52--83.

\bibitem{Ri2} K. Richardson, \emph{The asymptotics of heat kernels on
Riemannian foliations,} Geom. Funct. Anal.\textbf{8} (1998), 1--46.

\bibitem{Ri3} K. Richardson, \emph{The transverse geometry of
G-manifolds and Riemannian foliations}, Illinois J. Math. \textbf{45}(2001),
pp. 517-535.

\bibitem{Ro} J. Roe, \emph{\ Elliptic operators, topology, and
asymptotic methods,} Pitman Research Notes in Math., no. 179, Longman
Scientific and Technical, Harlow, and Wiley, New York, 1988.

\bibitem{T} Ph. Tondeur, \emph{\ Foliations on Riemannian manifolds,}
Springer Verlag, New York, 1988.

\bibitem{We} H. Weyl, \emph{Der Asymptotische Verteilungsgesetz der
Eigenwerte linearer partieller Differentialgleichungen,} Math. Ann. \textbf{%
71}(1912) 441--469.

\bibitem{Wi} H. E. Winkelnkemper, \emph{The graph of a foliation,} Ann.
Global Anal. Geom. \textbf{1}(1983), no. 3, 51--75.
\end{thebibliography}
\end{document}